\title[Geometric purity]{Geometric purity and the frame of smashing ideals}
\newcommand{\comments}[1]{}
\newcommand{\colim}{\operatorname{colim}\nolimits}
\newcommand{\h}{\operatorname{h}\nolimits}
\newcommand{\Hom}{\operatorname{Hom}\nolimits}
\newcommand{\Mod}{\operatorname{Mod}\nolimits}
\newcommand{\Spc}{\operatorname{Spc}\nolimits}
\newcommand{\supp}{\operatorname{supp}\nolimits}
\newcommand{\loc}{\operatorname{loc}\nolimits}
\DeclareMathOperator{\Zg}{Zg}
\DeclareMathOperator{\mmod}{mod}
\DeclareMathOperator{\gpinj}{gpinj}
\DeclareMathOperator{\pinj}{pinj}
\newcommand{\mmmod}{\mmod\kern-0.1em\text{-}}%              
\newcommand{\MMod}{\Mod\kern-0.1em\text{-}}%  
\newcommand{\ModT}{\MMod \T^\omega}
\def \A{{\mathcal A}}
\def \pp{\mathbf p }
\def \D{{\mathcal D}}
\def \e{{\mathbb e}}
\def \f{{\mathbb f}}
\def \I{{\mathcal I}}
\def \K{{\mathcal K}}
\def \L{{\mathcal L}}
\def \P{{\mathcal P}}
\def \pp{\mathfrak p }
\def \Q{{\mathcal Q}}
\def \S{{\mathcal S}}
\def \T{{\mathcal T}}
\def \U{{\mathcal U}}
\def \Z{{\mathbb Z}}
\newcommand*{\doublerightarrow}[2]{\mathrel{
  \settowidth{\@tempdima}{$\scriptstyle#1$}
  \settowidth{\@tempdimb}{$\scriptstyle#2$}
  \ifdim\@tempdimb>\@tempdima \@tempdima=\@tempdimb\fi
  \mathop{\vcenter{
    \offinterlineskip\ialign{\hbox to\dimexpr\@tempdima+1em{##}\cr
    \rightarrowfill\cr\noalign{\kern.5ex}
    \rightarrowfill\cr}}}\limits^{\!#1}_{\!#2}}}
\newcommand*{\triplerightarrow}[1]{\mathrel{
  \settowidth{\@tempdima}{$\scriptstyle#1$}
  \mathop{\vcenter{
    \offinterlineskip\ialign{\hbox to\dimexpr\@tempdima+1em{##}\cr
    \rightarrowfill\cr\noalign{\kern.5ex}
    \rightarrowfill\cr\noalign{\kern.5ex}
    \rightarrowfill\cr}}}\limits^{\!#1}}}
\newcommand{\nc}{\newcommand}
\nc{\dmo}{\DeclareMathOperator}
\nc{\Weyl}[2]{{#1}/\!\!/{#2}}%{\overline{#1}_{#2}}%{W_{\!#1}{#2}}% sometimes Weyl is N_G(H)/C_G(H) not mod H.
\nc{\WGH}{\Weyl{G}{H}}
\nc{\WGK}{\Weyl{G}{K}}
\nc{\WGL}{\Weyl{G}{L}}
\nc{\WGN}{\Weyl{G}{N}}
\numberwithin{equation}{section}% makes equat numb contain the section
\newtheorem{theorem}[equation]{Theorem}
\newtheorem{proposition}[equation]{Proposition}
\newtheorem{corollary}[equation]{Corollary}
\newtheorem{lemma}[equation]{Lemma}
\theoremstyle{remark}
\newtheorem{definition}[equation]{Definition}
\newtheorem{terminology}[equation]{Terminology}
\newtheorem{remark}[equation]{Remark}
\newtheorem{recollection}[equation]{Recollection}
\newtheorem{example}[equation]{Example}
\newtheorem{notation}[equation]{Notation}
\newtheorem{convention}[equation]{Convention}
\newtheorem{warning}[equation]{Warning}
\newtheorem{question}[equation]{Question}
\keywords{Purity, Ziegler spectrum, Balmer spectrum, smashing ideal, definable tt-ideals}
\subjclass[2020]{18G80; 18E35, 18E45, 18F99, 55U35, 03C60}
\thanks{}
\date{\today}
\begin{document}

\author[G\'omez]{Juan Omar G\'omez}
\author[Medina]{Mauricio Medina-Barcenas}
\author[Stevenson]{Greg Stevenson}
\author[Villarreal]{Bernardo Villarreal}
\author[Zaldivar]{Angel Zaldivar}

\address{Juan Omar G\'omez, Fakultat f\"ur Mathematik, Universit\"at Bielefeld, D-33501 Bielefeld, Germany}
\email{jgomez@math.uni-bielefeld.de}
\urladdr{https://sites.google.com/cimat.mx/juanomargomez/home}

\address{Mauricio Medina-Barcenas, Departamento de Matemáticas, Universidad Autónoma     Me-tropolitana UAM
Iztapalapa, CDMX, M\'exico}
\email{mmedina@xanum.uam.mx}
\urladdr{https://www.medinabarcenas.com/}

\address{Greg Stevenson, Aarhus University, Department of Mathematics, Ny Munkegade 118, bldg. 1530, 8000 Aarhus C, Denmark}
\email{greg@math.au.dk}
\urladdr{https://sites.google.com/view/gregstevenson}

\address{Bernardo Villarreal, Departamento de Matematicas, Centro de Investigaci\'on y de Estudios Avanzados del Instituto Politecnico Nacional, Av. IPN 2508, CDMX, M\'exico
}
\email{bvillarreal@math.cinvestav.mx}
\urladdr{https://sites.google.com/math.cinvestav.edu.mx/bvillarreal}

\address{Angel Zaldivar, Department of Mathematics, University of Guadalajara, Blvd. Gral. Marcelino Grac\'ia 1421, Ol\'impica, 44430, Jalisco, M\'exico.
}
\email{luis.zaldivar@academicos.udg.mx}
\urladdr{https://www.lazaldivar.com/}

\begin{abstract}
We introduce the notion of geometric purity in rigidly-compactly generated tt-categories by considering exact triangles that are pure at each tt-stalk. We develop a systematic study of this concept, including examples and applications. In particular, we show that geometric purity is, in general, strictly stronger than ordinary purity, and that it naturally leads to the notion of geometrically pure-injective objects. We prove that such objects arise as pushforwards of pure-injective objects from suitable tt-stalks. Moreover, we give a detailed analysis of indecomposable geometrically pure-injective objects in the derived category of the projective line. Under mild additional assumptions, we identify the geometric part of the Ziegler spectrum as a closed subset.

As an application, we demonstrate that this new notion of purity can be used to tackle the problem of spatiality of the frame of smashing ideals via the geometric Ziegler spectrum. In particular, we show that our approach rules out the counterexamples of Balchin and Stevenson to existing methods. 
\end{abstract}

\maketitle
\tableofcontents
\setcounter{tocdepth}{1}

\setlength{\parindent}{0cm}
\setlength{\parskip}{0.8ex}

\section{Introduction}

Since its development in \cite{balmer2005spectrum}, the field of tensor triangular (tt) geometry has attracted a great deal of interest across several areas of mathematics. On the one hand, it provides a coherent framework for tackling classification problems in tt-categories that are typically considered \textit{wild}; on the other hand, it allows tools and techniques to be transported and unified across these different areas. We refer the reader to \cite{Bal10}, \cite{stevenson2018tour}, and \cite{balmer2020guide} for surveys of the subject. 

In practice, one often encounters rigidly-compactly generated tt-categories, and a natural problem is to understand the class of their localizing tensor ideals. Among these, smashing localizations play a distinguished role: they are central not only in tensor triangular geometry, but also in the framework of \textit{condensed mathematics} \cite{Sch19} and in the theory of compactly generated triangulated categories \cite{Kra00}.

We focus on rigidly-compactly generated tt-categories. In this setting, the class of smashing tensor ideals forms a \textit{frame}, as shown in \cite{balmer2020frame}. Recall that a frame is \textit{spatial} if it is isomorphic to the frame of open subsets of a topological space.  A natural question is the following.

\begin{question}\label{question}
Let $\T$ be a rigidly-compactly generated tt-category. Is the frame of smashing tensor ideals of $\T$ spatial?
\end{question}

Although a positive answer to this question is not strictly necessary in order to exploit this frame, it would significantly simplify several aspects of tt-geometry; see \cite{balchin2021big}. In fact, an early version of \textit{loc.\ cit.} claimed that this frame is always spatial, and a different argument appeared in \cite{Wag23}. However, both approaches were later shown to be incorrect, as clarified in \cite[Appendix A]{balchin2021big}. Consequently, the question remains open. It is known that without the rigidity assumption the frame of smashing ideals need not be spatial. This is shown in \cite{aoki2023sheaves}, which also develops the theory of the smashing frame in general from an $\infty$-categorical perspective.

It is worth emphasizing that the approach in \cite{Wag23} is motivated by model-theoretic techniques and, in particular, proposes a topological space whose frame of open subsets might be isomorphic to the frame of smashing ideals. Such techniques have recently found further applications in tt-geometry; see, for example, \cite{BW23} and \cite{PW24}, with \cite{Kra00} as an important precursor in the context of compactly generated triangulated categories.

This work is motivated by \cref{question}, and we take both \cite{balchin2021big} and \cite{Wag23} as our starting points. Informally, one may say that the difficulty in Wagstaffe’s argument is that the Ziegler spectrum has \textit{too many points}. One way to address this issue is to take the monoidal structure into account. To this end, we introduce the notion of geometric purity in a rigidly-compactly generated tt-category, which allows us to eliminate certain \textit{problematic} points from the Ziegler spectrum. This notion is inspired by ideas of Prest and Sl\'avik \cite{SP18}. 

We provide a detailed study of this new concept, supply examples, and ultimately use it to develop a method for addressing \cref{question}. We now describe our main contributions more precisely.

\subsection*{Main results}

Let $\T$ be a rigidly-compactly generated tt-category, and let $\P$ be a point of $\Spc(\T^\omega)$. The \textit{tt-stalk of $\T$ at $\P$} is defined as
\[
\T_\P\coloneqq \T/\mathrm{loc}(\P) 
\]
which is a rigidly-compactly generate tt-category, where $\mathrm{loc}(\P)$ denotes the smallest localizing subcategory of $\T$ generated by $\P$. We write $\iota_\P^\ast$ for the corresponding quotient functor $\T \to \T_\P$.

We are now in a position to introduce the central definition of this work.

\begin{definition}
An exact triangle $x \to y \to z$ in $\T$ is called \textit{geometrically pure} (or \textit{g-pure}, for short) if the induced exact triangle
\[
\iota_\P^\ast (x) \to \iota_\P^\ast(y) \to \iota_\P^\ast(z)
\] 
 is pure in $\T_\P$ for every $\P \in \Spc(\T^\omega)$. An object $x \in \T$ is said to be \textit{g-pure-injective} if every g-pure exact triangle $x\to y \to z$ splits. 
\end{definition}

One of the main contributions of this work is a systematic development of geometric purity in this setting. From the definition it follows immediately that every pure-triangle is g-pure. However, it is not a priori clear that g-purity is genuinely stronger than the usual notion of purity in compactly generated triangulated categories. We provide an example showing that this is indeed the case in \cref{g-pure vs g-pure á la PS}. We also discuss situations where the two notions coincide; for instance, in the derived category of a commutative ring (see  \cref{pure=gpure affine}).

Turning to indecomposable g-pure injective objects, we prove that all such objects can be realized as pushforwards of pure-injectives from tt-stalks. More precisely, this is recorded in the following result, which later appears as  \cref{thm-gp-stalks}.

\begin{theorem}
Let $\T$ be a rigidly-compactly generated tt-category, and let $x$ be an indecomposable g-pure injective object in $\T$. Then there exists a prime $\P \in \Spc(\T^\omega)$ and a pure-injective object $y \in \T_\P$ such that 
\[
x\simeq \iota_{\P,\ast} y
\] 
where $\iota_{\P,\ast}$ denotes the right adjoint of $\iota_\P^\ast$.
\end{theorem}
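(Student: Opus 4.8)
The plan is to first produce the prime $\P$ from the structure of $x$, and then identify $x$ with a pushforward along $\iota_\P^*$. To locate $\P$, I would consider the collection of primes $\Q \in \Spc(\T^\omega)$ for which $\iota_\Q^*(x) \neq 0$ in $\T_\Q$; equivalently, one is looking at the support of $x$ in the Balmer-spectrum sense. Since $x$ is nonzero (being indecomposable), this set is nonempty. The key point I expect to need is that this support set has a generic point, or at least a minimal element in a suitable ordering, and that geometric pure-injectivity of $x$ forces its ``localization at the generic point'' to already recover $x$. I would argue as follows: for any prime $\Q$, the localization triangle $\Gamma_\Q x \to x \to \iota_{\Q,*}\iota_\Q^* x$ (the unit of the adjunction, completed to an exact triangle) is g-pure — this should follow because applying $\iota_\P^*$ to it either yields a split triangle (when $\P$ is not a specialization-related to $\Q$) or the already-pure localization triangle in a stalk of $\T_\Q$. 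Granting g-purity of this triangle, g-pure-injectivity of $x$ makes it split, so $x$ is a retract of $\iota_{\Q,*}\iota_\Q^* x$; by indecomposability $x$ is in fact a summand, and for a well-chosen $\Q$ (one minimal in the support) the complementary summand $\Gamma_\Q x$ must vanish, giving $x \simeq \iota_{\Q,*}\iota_\Q^* x$. Set $\P \coloneqq \Q$ and $y \coloneqq \iota_\P^* x$.

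The next step is to verify that $y = \iota_\P^* x \in \T_\P$ is pure-injective. Here I would show that $\iota_\P^*$ sends g-pure triangles through $x$ to pure triangles, and that $\iota_{\P,*}$ sends pure triangles in $\T_\P$ to g-pure triangles in $\T$; the latter is the essential compatibility and should be the technical heart of this part. Given a pure triangle $y \to y' \to y''$ in $\T_\P$, apply $\iota_{\P,*}$ to get an exact triangle in $\T$; show it is g-pure by checking purity after each $\iota_\Q^*$ (using base-change/projection-formula relations between the stalk functors, plus the fact that $\iota_P^*\iota_{\P,*} \simeq \id$). Since $x \simeq \iota_{\P,*} y$ is g-pure-injective, this pushed-forward triangle splits; applying $\iota_\P^*$ back and using $\iota_\P^* \iota_{\P,*} \simeq \id$ recovers the original triangle $y \to y' \to y''$ as a split triangle. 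Hence $y$ is pure-injective in $\T_\P$, completing the proof.

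The main obstacle I anticipate is establishing the two compatibility statements between geometric purity and the stalk functors: concretely, (i) that the unit triangle $\Gamma_\P x \to x \to \iota_{\P,*}\iota_\P^* x$ is g-pure, and (ii) that $\iota_{\P,*}$ carries pure triangles in $\T_\P$ to g-pure triangles in $\T$. Both require understanding how an iterated localization $\iota_\Q^* \iota_{\P,*}$ behaves — i.e.\ a base-change formula comparing the stalk at $\Q$ of a pushforward from the stalk at $\P$ — and controlling when the relevant composites are zero versus an equivalence depending on the relation between the primes $\P$ and $\Q$ in $\Spc(\T^\omega)$. A secondary subtlety is the existence of a ``minimal'' prime in the support of $x$ making $\Gamma_\P x = 0$; if $\Spc(\T^\omega)$ is not Noetherian one may need to phrase this via a Zorn's lemma argument on the lattice of localizing ideals, or to replace ``minimal prime'' by a limiting/pro-object construction and argue that indecomposability of $x$ still pins down a single honest prime. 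Once these points are in place, the retract-then-indecomposable argument and the splitting manipulations are routine.
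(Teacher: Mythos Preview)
Your claim (i) --- that the localization triangle $\Gamma_\Q x \to x \to \iota_{\Q,\ast}\iota_\Q^\ast x$ is g-pure --- is false, and this is where the argument breaks. Take $\T=\mathbf{D}(\Z)$, let $x=\hat{\Z}_p$ (an indecomposable pure-injective, hence g-pure-injective by \cref{pure=gpure affine}), and let $\Q$ be the tt-prime corresponding to $(0)\in\Spec(\Z)$. The unit map is $\hat{\Z}_p\to\mathbb{Q}_p$; at the stalk corresponding to $(p)$ this is still $\hat{\Z}_p\to\mathbb{Q}_p$ in $\mathbf{D}(\Z_{(p)})$, and applying $\Hom(\Z/p,-[1])$ gives $\Z/p\to 0$, so the map is not a pure monomorphism there. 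Your heuristic ``either split or the already-pure localization triangle in a stalk'' fails because localization triangles are simply not pure in general. The same example shows that ``minimal prime in the support'' does not locate the right $\P$: the support of $\hat{\Z}_p$ (in your sense) is all of $\Spec(\Z)$, so the minimal prime is $(0)$, whereas the prime produced by the theorem is $(p)$.

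The paper avoids this by never localizing at a single prime. Instead one takes a finite cover $\{U_i\}$ of $\Spc(\T^\omega)$ by quasi-compact opens and observes that the diagonal map
\[
x\longrightarrow \bigoplus_i \f_{U_i^c}\otimes x
\]
\emph{is} a g-pure monomorphism: for any $\P$ there is some $i$ with $\P\in U_i$, so after applying $\iota_\P^\ast$ the $i$th component is an isomorphism and the map splits. G-pure-injectivity plus indecomposability then forces $x\simeq \f_{U_i^c}\otimes x$ for some $i$, i.e.\ $x\in\T(U_i)$. One now iterates inside $\T(U_i)$ with finer covers, producing a nested family $\{U_\alpha\}$ of quasi-compact opens with $x\simeq \f_{U_\alpha^c}\otimes x$ for every $\alpha$; the finite intersection property in the Hochster dual gives a prime $\P\in\bigcap_\alpha U_\alpha$, and the colimit formula $\f_\P\simeq\colim_\alpha \f_{U_\alpha^c}$ yields $x\simeq \f_\P\otimes x=\iota_{\P,\ast}\iota_\P^\ast x$. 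Finally, that $y=\iota_\P^\ast x$ is pure-injective requires no base-change gymnastics: $\iota_{\P,\ast}$ is fully faithful and preserves pure monomorphisms (\cref{iota preserves definables}), so it reflects pure-injectives.
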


For a given cover $\{U_i\mid i \in I\}$ of $\Spc(\T^\omega)$ by quasi-compact open subsets, we analyze the relation between purity in $\T$ and purity in each $\T(U_i)$. In particular, by considering the subspace $\mathrm{GZg}(\T)$ of the Ziegler spectrum of $\T$ determined by the isomorphism classes of indecomposable g-pure injective objects, we obtain the following result, which is a reformulation of  \cref{coro-surjection-gzg-stalks}.

\begin{theorem}
Let $\T$ be a rigidly-compactly generated tt-category, and let $\{U_i\mid i \in I\}$ be a cover of $\Spc(\T^\omega)$ by quasi-compact open subsets. Then the induced map on Ziegler spectra 
\[
\coprod_{i\in I}\mathrm{GZg}(\T(U_i)) \to \mathrm{GZg}(\T)
\]
is a topological quotient. Moreover, each component map $\mathrm{GZg}(\T(U_i)) \to \mathrm{GZg}(\T)$ is a topological embedding. 
\end{theorem}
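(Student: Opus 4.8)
The plan is to reduce the statement for a general cover to the two-element-cover case, and ultimately to a single quasi-compact open together with its complement, by a standard induction on the size of a finite subcover (using that $\Spc(\T^\omega)$ is spectral, hence quasi-compact, so finitely many of the $U_i$ already cover). For the case of a single quasi-compact open $U$ with closed complement $Z$, write $\T(U) = \T/\T_Z$ for the finite localization at the corresponding smashing ideal, and $\Gamma_Z\T$ for the complementary colocalization. The key inputs are the two adjunctions $j^*\colon \T \rightleftarrows \T(U)$ and $i_*\colon \Gamma_Z\T \rightleftarrows \T$ together with the recollement they assemble into; I will use that $j^*$ and $i_*$ each preserve and reflect purity of the relevant triangles after passing to tt-stalks, which is exactly the content I expect to extract from the earlier results on how g-purity interacts with tt-stalks (every stalk of $\T(U)$ is a stalk of $\T$ lying over a point of $U$, and symmetrically for the colocalization).

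First I would establish the set-theoretic description: a point of $\mathrm{GZg}(\T)$ is an indecomposable g-pure-injective $x \in \T$, and by \cref{thm-gp-stalks} it has the form $\iota_{\P,\ast}y$ for a unique $\P \in \Spc(\T^\omega)$ and an indecomposable pure-injective $y \in \T_\P$. Since $\{U_i\}$ covers, $\P \in U_i$ for some $i$, and then $\T_\P$ is canonically a tt-stalk of $\T(U_i)$; pushing $y$ forward along $\T(U_i) \to \T$ recovers $x$ by compatibility of the stalk functors. This shows the map $\coprod_i \mathrm{GZg}(\T(U_i)) \to \mathrm{GZg}(\T)$ is surjective. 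For the embedding statement on each component, I would check that $j_i^* = \iota_{U_i}^*$ induces a continuous injection on the relevant locus — injectivity because the stalks and their pure-injectives are recovered from $x$ — and that a basic Ziegler-closed set in $\mathrm{GZg}(\T(U_i))$ is the preimage of a Ziegler-closed set of $\mathrm{GZg}(\T)$. The latter uses that the coherent functors (finitely presented functors on $\T^\omega$) defining the topology on $\mathrm{GZg}(\T)$ restrict, along $j_i^*$, to the coherent functors cutting out the topology on $\mathrm{GZg}(\T(U_i))$; concretely, $j_i^*$ being a finite localization means $\T(U_i)^\omega$ is a Verdier quotient of $\T^\omega$, and g-pure exact triangles are detected stalk-wise, so the subbasic open/closed sets match up.

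For the quotient statement I would verify the universal property directly: a subset $W \subseteq \mathrm{GZg}(\T)$ is closed if and only if its preimage in each $\mathrm{GZg}(\T(U_i))$ is closed. The forward direction is continuity. For the converse, suppose $W^{-1}$ is closed in every $\mathrm{GZg}(\T(U_i))$; using that each component map is a topological embedding with image an open (or locally closed) piece of $\mathrm{GZg}(\T)$ — this is where I need the covering to be by \emph{quasi-compact opens}, so that the images $\mathrm{GZg}(\T(U_i)) \hookrightarrow \mathrm{GZg}(\T)$ form an open cover of the target — I can glue: $W \cap \mathrm{GZg}(\T(U_i))$ is closed in $\mathrm{GZg}(\T(U_i))$, hence in its image, and a subset of a space is closed iff its intersection with each member of an open cover is closed in that member. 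Finite-ness of a subcover is not actually needed for the quotient/embedding conclusions once the single-open case is in hand, but it streamlines the reduction; I would phrase the induction so that at each stage I glue $U$ and $\bigcup_{i \neq i_0} U_i$.

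The main obstacle I anticipate is the precise compatibility of Ziegler-closed sets under the finite localization $j^*$ — i.e., showing that the topology on $\mathrm{GZg}(\T(U))$ is exactly the subspace topology inherited from $\mathrm{GZg}(\T)$, and that the image is open. This amounts to a bookkeeping argument about how finitely presented (coherent) functors on $\T^\omega$ behave under Verdier quotients and how g-purity, being a stalk-local condition, is insensitive to passing between $\T$ and $\T(U)$ over points of $U$; the subtlety is that a g-pure triangle in $\T(U)$ need not obviously lift to a g-pure triangle in $\T$ until one checks the stalks over the closed complement $Z$ contribute nothing — which is automatic because $\iota_\P^*$ for $\P \in Z$ kills everything in the image of $j_*$. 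Once that is pinned down, openness of the image and the quotient property follow formally from the recollement.
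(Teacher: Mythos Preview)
Your surjectivity argument via \cref{thm-gp-stalks} is exactly right and matches the paper. The genuine gap is in the topology: you assert that the images of the component maps form an \emph{open} cover of $\mathrm{GZg}(\T)$, and then invoke the fact that closedness is local on an open cover. But the images are \emph{closed}, not open. The map $\mathrm{GZg}(\T(U_i)) \to \mathrm{GZg}(\T)$ is induced by the fully faithful right adjoint $\iota_{U_i^c,\ast}$, and for any smashing ideal $\S$ the functor $\iota_{\S,\ast}$ preserves and reflects definable subcategories (\cref{iota preserves definables}), so it induces a closed embedding $\mathrm{Zg}(\T/\S)\hookrightarrow\mathrm{Zg}(\T)$; restricting to the geometric part gives a closed embedding on $\mathrm{GZg}$. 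Your heuristic that ``stalks over $Z$ contribute nothing to objects pushed from $\T(U)$'' is correct as a statement about stalks, but it does not translate into openness in the Ziegler topology, whose closed sets are cut out by definable subcategories, and $\S^\perp$ is precisely such a closed set.

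Once you know each component is a closed embedding, the quotient statement is immediate and no recollement or induction is needed: after passing to a finite subcover (using quasi-compactness of $\Spc(\T^\omega)$, as you note), the map $\coprod_i \mathrm{GZg}(\T(U_i)) \to \mathrm{GZg}(\T)$ is surjective and closed (a finite union of closed images is closed), hence a quotient. This is the paper's argument. Your bookkeeping with coherent functors and Verdier quotients, and the detour through $\Gamma_Z\T$, are unnecessary; the whole content is already packaged in \cref{iota preserves definables} and \cref{prop-iotaU-gp-gp}. Also watch the direction: you write that $j_i^\ast$ induces the injection, but the component map is induced by the pushforward $\iota_{U_i^c,\ast}$, not the localization.
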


The previous result, together with some additional assumptions, shows that the geometric part of the Ziegler spectrum is in fact a closed subset (see \cref{coro-gzg-closed}). This applies, in particular, to the derived category of a qcqs scheme (see  \cref{coro-zgz-scheme}). In light of these examples, it is natural to ask the following question:

\begin{question}
Does there exist a cover $\{U_i \mid i \in I\}$ of $\Spc(\T^\omega)$ by quasi-compact open subsets such that for each $i$ $\mathrm{Zg}(\T(U_i))$ coincides with its geometric part $\mathrm{GZg}(\T(U_i))$?
\end{question}

We do not know the answer to this question, but a positive one would imply that the geometric part of the Ziegler spectrum is always a closed subset.

Let us now turn to an application of the theory developed in this work. Denote by $\mathbb{S}^\otimes(\T)$ the frame of smashing tensor-ideals of $\T$. As already mentioned, Wagstaffe’s approach to \cref{question} in \cite{Wag23}  consisted in studying the frame of definable tensor ideals which is anti-equivalent to the frame of smashing ideals, and using them to define a topology on the Ziegler spectrum of $\T$. This is a natural idea, since a definable category is completely determined by the pure-injectives it contains by work of Krause \cite{Kra00}. However, the aforementioned counterexamples show that the Ziegler spectrum contains too many points for this method to succeed.

This difficulty was our original motivation for seeking a stronger notion of purity, one that would allow us to disregard certain pure-injectives. To present our main result in this direction, we now introduce some terminology.

\begin{definition}
Let $\gpinj(\T)$ denote the set of isomorphism classes of indecomposable g-pure-injective objects of $\T$. A subset $U \subset \gpinj(\T)$ is called \textit{tt-closed} if it is of the form 
\[U=\D\cap \gpinj(\T)\] 
for some definable tt-ideal $\D$ of $\T$.  
\end{definition}

We emphasize that the tt-closed subsets do not define a topology on the entire set of isomorphism classes of indecomposable pure-injectives $\pinj(\T)$. Our hope, however, is that they do induce a topology on the geometric part of the Ziegler spectrum. In this case, one would obtain that the frame of smashing ideals is isomorphic to the frame of open subsets of the resulting space, thereby verifying spatiality. In particular, this definition allows us to establish a partial local-to-global principle for the spatiality of the frame of smashing ideals. 

\begin{theorem}
Let $\T$ be a rigidly-compactly generated tt-category. Suppose that for each $\P \in \Spc(\T^\omega)$, the tt-closed subsets determine the closed subsets of a topology on $\gpinj(\T_\P) = \pinj(\T_\P)$. Then the tt-closed subsets determine a topology on $\gpinj(\T)$. In particular, the frame of smashing ideals $\mathbb{S}^\otimes(\T)$ is spatial.  
\end{theorem}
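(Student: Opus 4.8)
The plan is to glue the local topologies on the $\gpinj(\T_\P)$ into a global one on $\gpinj(\T)$, using the pushforward description of indecomposable g-pure-injectives from \cref{thm-gp-stalks} together with the quotient/embedding statement from \cref{coro-surjection-gzg-stalks}. First I would recall that by \cref{thm-gp-stalks} every element of $\gpinj(\T)$ is of the form $\iota_{\P,\ast}y$ for a unique prime $\P$ (one should check, or cite, that the prime $\P$ is determined by $x$ — this is where the tt-structure is genuinely used, e.g. via the support of $x$ in $\Spc(\T^\omega)$ or the fact that $\iota_\P^\ast x \neq 0$ for exactly one $\P$) and a pure-injective $y \in \T_\P$; and since pure-injectives in a compactly generated triangulated category decompose, one reduces to indecomposable $y$, i.e. $y \in \pinj(\T_\P) = \gpinj(\T_\P)$. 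This gives a set-theoretic partition
\[
\gpinj(\T) = \coprod_{\P \in \Spc(\T^\omega)} \gpinj(\T_\P),
\]
and the hypothesis supplies a topology on each piece.

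Next I would show that the candidate closed sets — the tt-closed subsets $\D \cap \gpinj(\T)$ for $\D$ a definable tt-ideal — restrict on each stratum $\gpinj(\T_\P)$ to a tt-closed subset there. The key point is that the functors $\iota_\P^\ast \colon \T \to \T_\P$ and their right adjoints interact well with definable tt-ideals: $\iota_\P^\ast$ sends a definable tt-ideal of $\T$ to (the definable tt-ideal generated by its image in) a definable tt-ideal of $\T_\P$, and conversely definable tt-ideals of $\T_\P$ pull back. Using that $\iota_{\P,\ast}y$ lies in $\D$ iff $y$ lies in the corresponding definable tt-ideal of $\T_\P$ — which should follow from the adjunction and the fact that $\D$ is a tt-ideal stable under the relevant limits/colimits — one identifies $(\D \cap \gpinj(\T)) \cap \gpinj(\T_\P)$ with a tt-closed subset of $\gpinj(\T_\P)$. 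By hypothesis these are exactly the closed sets of the given topology on $\gpinj(\T_\P)$. One then equips $\gpinj(\T)$ with the topology whose closed sets are the tt-closed subsets, and the work above shows each such set meets each stratum in a closed set; conversely, arbitrary unions and finite intersections of tt-closed subsets are again tt-closed (definable tt-ideals are closed under intersection, and under the join operation in the frame of definable tt-ideals, which corresponds to meet/join of smashing ideals), so the tt-closed subsets do form the closed sets of a topology. The compatibility with the stratification, via \cref{coro-surjection-gzg-stalks} giving a quotient map $\coprod_\P \mathrm{GZg}(\T(U_i)) \to \mathrm{GZg}(\T)$, is what guarantees we have not collapsed or over-separated points.

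Finally, for spatiality of $\mathbb{S}^\otimes(\T)$: once $\gpinj(\T)$ carries this topology, its lattice of open sets is by construction the lattice of complements of tt-closed subsets, which is in order-preserving bijection with the frame of definable tt-ideals of $\T$ (every definable tt-ideal is recovered from the indecomposable g-pure-injectives it contains — this is the analogue, in the geometric setting, of Krause's theorem that a definable subcategory is determined by its pure-injectives, and is presumably established earlier in the paper for the geometric Ziegler spectrum). Since the frame of definable tt-ideals is anti-equivalent to $\mathbb{S}^\otimes(\T)$ by \cite{balmer2020frame} (as recalled in the introduction), and the frame of opens of $\gpinj(\T)$ is anti-equivalent to its frame of closed sets $=$ frame of definable tt-ideals, we conclude $\mathbb{S}^\otimes(\T) \cong \mathcal{O}(\gpinj(\T))$, so the smashing frame is spatial.

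I expect the main obstacle to be the middle step: verifying that a definable tt-ideal of $\T$ really does restrict, along $\iota_{\P,\ast}$, to a definable tt-ideal of $\T_\P$ in a way that is compatible with containment of the relevant indecomposable g-pure-injectives — i.e. that $\iota_{\P,\ast}y \in \D \iff y \in \iota_\P^\ast \D$ (suitably interpreted) — and, dually, that every tt-closed subset of $\gpinj(\T_\P)$ arises this way by restriction from $\T$. This requires knowing that $\iota_\P^\ast$ preserves the definability conditions (closure under products, directed colimits, pure subobjects) at the level of tt-ideals and that the adjunction is compatible with the frame structure on definable tt-ideals; the bookkeeping here, rather than the topology axioms (which are formal once this compatibility is in hand), is the technical heart of the argument.
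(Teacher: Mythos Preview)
There is a genuine gap, and it sits exactly where you declare the argument to be ``formal.'' You assert that ``arbitrary unions and finite intersections of tt-closed subsets are again tt-closed (definable tt-ideals are closed under intersection, and under the join operation in the frame of definable tt-ideals\ldots).'' First, for closed sets you need arbitrary \emph{intersections} and finite \emph{unions}; but more importantly, the existence of the join $\D = \D_1 \vee \D_2$ in $\mathbb D^\otimes(\T)$ does \emph{not} show that $X_1 \cup X_2$ is tt-closed. It only shows that $X_1 \cup X_2 \subseteq \D \cap \gpinj(\T)$. The entire difficulty --- and the reason the Balchin--Stevenson counterexamples defeat the ungraded Ziegler approach --- is that the join can contain extra indecomposable pure-injectives not already in $\D_1$ or $\D_2$. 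Proving the reverse inclusion is the content of the theorem; it is not a formal consequence of the lattice structure.

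The paper's argument uses the stalk decomposition precisely to close this gap, and it does so on the dual side of smashing ideals. Given $x \in (\D_1 \vee \D_2) \cap \gpinj(\T)$, write $\S_i = {}^\perp\D_i$ so that $\D_1 \vee \D_2 = (\S_1 \cap \S_2)^\perp$; thus $\Hom_\T(\S_1 \cap \S_2, x) = 0$. Now invoke \cref{thm-gp-stalks} to write $x \simeq \iota_{\P,\ast} y$ for some $\P$ and some indecomposable pure-injective $y \in \T_\P$; by adjunction $\Hom_{\T_\P}(\iota_\P^\ast\S_1 \cap \iota_\P^\ast\S_2, y) = 0$, so $y$ lies in the join $(\iota_\P^\ast\S_1)^\perp \vee (\iota_\P^\ast\S_2)^\perp$ computed in $\mathbb D^\otimes(\T_\P)$. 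The local hypothesis now says that \emph{in $\T_\P$} the join on definable tt-ideals corresponds to the union on tt-closed subsets, hence $y \in (\iota_\P^\ast\S_i)^\perp$ for $i = 1$ or $i = 2$, and pushing back gives $x \in \D_i$. Your proposal sets up the stratification and the restriction compatibility, but never deploys them on this inclusion; the step you flagged as the obstacle (compatibility of $\iota_\P^\ast$ with definable tt-ideals) is actually straightforward via adjunction, while the step you called formal is where the work lives. (A minor point: the prime $\P$ in \cref{thm-gp-stalks} is not claimed to be unique, and no uniqueness is needed.)
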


It is not difficult to show that spatiality of $\mathbb{S}^\otimes(\T)$ implies spatiality of $\mathbb{S}^\otimes(\T_\P)$ for any prime $\P \in \Spc(\T^\omega)$ (see \cref{prop-global-to-local}). Moreover, the preceding result extends in a similar way to quasi-compact open covers. 

Finally, we show in \cref{sec-examples} that Balchin-Stevenson's examples in \cite{balchin2021big} are rule out by this approach.

\subsection*{Acknowledgements}
We thank Mike Prest for his interest in this project. 
JOG was supported by  the Deutsche Forschungsgemeinschaft (Project-ID 491392403 – TRR 358). GS was supported by Danmarks Frie Forskningsfond (grant ID: 10.46540/4283-00116B). This project was partially  supported by SECIHTI-Project CBF2023-2024-2630.

\section{Preliminaries}

In this section, we recollect basic terminology and results from tt-geometry as well as basics on purity in compactly generated triangulated categories. Our main references for this section are \cite{balmer2005spectrum}, \cite{BKS19}, \cite{balchin2021big} and \cite{Kra00}.  

\subsection{The Balmer spectrum}

Let $\K$ be an essentially small tt-category. A triangulated subcategory $\I$ of $\K$ is \textit{thick} if it is closed under retracts; it is a \textit{$\otimes$-ideal} if it stable under the tensor product; and it is \textit{prime} if it is a proper thick $\otimes $-ideal which satisfies that if $x\otimes y\in \K$, then $x\in \K$ or $y\in \K$. 

\begin{recollection}\label{rec-balmer-spc}
    The \textit{Balmer spectrum} of $\K$ is defined as  
    \[
    \Spc(\K)\coloneqq \{\P\mid \P \mbox{ is prime } \}
    \] 
    equipped with the topology generated by the base of closed subsets  
    \[
    \supp(x)\coloneqq \{\P\in \Spc(\K)\mid x\not\in \P\}
    \]
    as $x$ runs over the objects of $\K$. One refers to $\supp(x)$ as the \textit{support} of $x$. The space $\Spc(\K)$ is indeed a spectral space. 
\end{recollection}

\begin{recollection}
    Let $F\colon \K\to \L$ be a tt-functor. Then we have an induced spectral map 
    \[
    \Spc(\L)\to \Spc(\K), \quad \P\mapsto F^{-1}\P.
    \]
    In other words, the Balmer spectrum is a functor from the opposite category of tt-categories and tt-functors to the category of spectral spaces. 
\end{recollection}

\begin{recollection}\label{rec-Balmer-class}
    A remarkable result due to Balmer is that $\Spc(\K)$ together with $\supp(-)$ provides a parametrization of \textit{radical} tt-ideals of $\K$; that is, those tt-ideals $\I$ such that if $x^{\otimes n}\in \I$, then $x\in \I$. More precisely, we have an order preserving bijection  
    \begin{center}
        \begin{tikzcd}
            \mathrm{Rad}(\K)  \arrow[r, shift left=.5ex,"\supp"] & \mathrm{Thom}(\Spc(\K))  \arrow[l, shift left=.5ex,"\K_{-}"]
        \end{tikzcd}
    \end{center}
    where $\mathrm{Thom}(\Spc(\K))$ denotes the poset of Thomason subsets of the spectrum $\Spc(\K)$; that is, those sets that are unions of closed subsets that have quasi-compact complement. The map $\K_{Y}$ is the radical ideal consisting of  those elements $x\in \K$ whose support is contained in $Y$.  On the other hand, $\supp(\I)$ is simply $\bigcup_{x\in \I}\supp(x)$. 
\end{recollection}

\begin{remark}
    If the category $\K$ is also rigid, then all tt-ideals are automatically radical. Hence the above bijection gives us a complete classification of tt-ideals in this case. 
\end{remark}

\begin{remark}\label{rem-hochster-dual}
 Let us point out that the Thomason subsets of $\Spc(\K)$ are in fact the open subsets of the \textit{Hochster dual} $\Spc(\K)^\vee$ of $\Spc(\K)$. The space  $\Spc(\K)^\vee$ has the same points as $\Spc(\K)$, and it is also a spectral space. 
\end{remark}

\subsection{Big categories and geometric functors}

We are interested in tt-categories that have small coproducts. We restrict ourselves to those often referred as \textit{big tt-categories}. Let us make this precise. 

\begin{recollection}
    A tt-category with small coproducts $\T$ is \textit{rigidly-compactly generated} if the (essentially small) collection of compact objects agrees with the collection of dualizable objects, and the smallest localizing subcategory of $\T$ containing $\T^\omega$ is $\T$, where $\T^\omega$ is the full subcategory of compact objects. We refer to \cite[Section 1]{BF11} for further details.  
\end{recollection}
\begin{example}
    The following are examples of rigidly-compactly generated tt-categories.
    \begin{enumerate}
        \item Let $X$ be a quasi-compact quasi-separated scheme. Then the derived category $\mathbf{D}_{\mathbf{qc}}(X)$ of complexes of  $\mathcal{O}_X$--modules with quasi-coherent cohomology, equipped with the derived tensor product $\otimes^L_{\mathcal{O}_X}$ is a rigidly-compactly generated tt-category. 
        \item Let $G$ be a finite group and let $k$ be a field of positive characteristic dividing $|G|$. Then the stable module category equipped with the monoidal structure induced by $\otimes_k$ with the diagonal $G$-action is a rigidly-compactly generated tt-category. 
    \end{enumerate}
\end{example}

\begin{terminology}
    A coproduct preserving tt-functor $f^\ast\colon \T\to \S$ between rigidly-compactly generated is referred as a \textit{geometric functor. }
\end{terminology}

\begin{remark}
   Note that a geometric functor in particular preserves compact objects (since any monoidal functor preserves dualizable objects).  This in particular gives us for \textit{free} a couple of adjunctions that we spell out in the following recollection.  
\end{remark}

\begin{recollection}\label{rec-geometric-functors}
    Let $f^\ast\colon \T\to \S$ be a geometric functor. Then $f^\ast$ has a right adjoint $f_\ast$ which has itself a right adjoint $f^!$. In other words, there is a chain of adjunctions $f^\ast\dashv f_\ast \dashv f^!$. Moreover, the adjunction $(f^\ast,f_\ast)$ satisfies the projection formula. That is, for $x\in \T$ and $y\in \S$ we have 
    \[
    f_\ast(f^\ast x\otimes y)\simeq x\otimes f_\ast y
    \]
we refer to  \cite{BDS16} for further details. 
\end{recollection}

\subsection{Smashing localizations}\label{subsection-smashingloc}

In this subsection we briefly recall some results from \cite{BF11}. 
Let $\T$ be a rigidly-compactly generated tt-category. For a class $\mathcal{E}$ of objects of $\T$ we write $\mathcal{E}^\perp$ to denote its right orthogonal complement in $\T$. 

\begin{definition}
    A \textit{smashing ideal} of $\T$ is a localizing tensor-ideal $\S$ of $\T$  which satisfies that the Verdier quotient $q^\ast\colon \T\to \T/\S$ exists and has a right adjoint $q_\ast$ which commutes with small coproducts.
\end{definition}

\begin{recollection}\label{rec-smashing-ideals}
    Let $\S$ be a smashing ideal of $\T$. Then there is a localization functor $(L_\S,\lambda)\colon \T\to \T$ and a colocalization functor $(\Gamma_\S,\gamma)\colon \T\to \T$ such that $\S=\mathrm{Ker}(L_\S)=\mathrm{Im}(\Gamma_\S)$ and $\S^\perp=\mathrm{Im}(L_\S)=\mathrm{Ker}(\Gamma_\S)$. Moreover, the following properties hold. 
    \begin{enumerate}
        \item There is an idempotent functorial triangle 
    \[
     \Gamma_\S \xrightarrow[]{\gamma} \mathrm{Id}\xrightarrow[]{\lambda} L_\S. 
    \] 
    \item The localization functor $L_\S$ can be identified with the functor $L_\S(\mathbb 1)\otimes -$ and the colocalization functor $\Gamma_\S$ with $\Gamma_\S(\mathbb 1)\otimes -$. 
    \item $\S^\perp$ is also a tensor-ideal. 
    \end{enumerate}
 \end{recollection}

\begin{remark}\label{rem-identification-ImL_S}
    In practice, one often identifies the category $\T/\S$ with the subcategory $\S^\perp$ of $\T$, but also with $L_\S(\mathbb 1)\otimes \T$ by means of \cref{rec-smashing-ideals}.   
\end{remark}

We now describe a class of smashing ideals arising as \textit{finite localizations}. These smashing ideals will play a relevant role later in this manuscript. We need to set some notation first. 

\begin{notation}
    For a class  $\mathcal{E}$ of objects of $\T$ we write $\loc(\mathcal{E})$ and $\loc_\otimes(\mathcal{E})$ to denote the smallest localizing subcategory of $\T$ and the smallest localizing ideal of $\T$ containing $\mathcal{E}$, respectively.  
\end{notation}

\begin{remark}
    Note that if the class $\mathcal{E}$ is already $\otimes$-stable, then $\loc_\otimes(\mathcal{E})=\loc(\mathcal{E})$. In fact, by our assumption on $\T$, the same holds as soon as $\mathcal{E}\otimes \T^\omega\subseteq \mathcal{E}$. 
\end{remark}

\begin{recollection}\label{rec-category-thomason}
    Let $\T$ be a big tt-category, and let $Y\subseteq \Spc(\T^\omega)$ be a Thomason subset. Consider the tt-ideal 
    \[\T^\omega_Y\coloneqq \{x\in \T^\omega\mid \supp(x)\subseteq Y\} 
    \]
    of $\T^\omega$. Note that this is indeed the unique tt-ideal associated to $Y$ via Balmer's classification; see \cref{rec-Balmer-class}.
    Hence we obtain a smashing ideal $\T_Y\coloneqq\loc(\T_Y^\omega)$ of $\T$. From this, we get a localization 
    \[
    \iota_{Y}^\ast \colon\T\to \T(Y^c)
    \]
    where $\T(Y^c)$ is short for $\T/\T_Y$. In fact, this is an example of a \textit{finite localization}. Hence,  by \cite[Theorem 2.1]{Nee92} applied to the finite localization above gives us that  $\T(Y^c)$ is a rigidly-compactly generated tt-category and its rigid-compact part can be described as the idempotent completion of the Verdier quotient  $\T^\omega/\T^\omega_Y$.  Note that in this case the localization functor $\iota^\ast_{Y}$ is a geometric functor.
    Moreover,
    we obtain an exact triangle
    \[
    \e_Y \to  \mathbb 1 \to \f_Y 
    \]
    with $\f_Y\in \T(Y^c)$ and $\e_Y\in \T_Y$. As before, we can identify $\T(Y^c)$ with the category $(\T_Y)^\perp=\f_Y\otimes \T$; see  \cref{rem-identification-ImL_S}.   The objects $\e_Y$ and $\f_Y$ are  referred as left and right idempotents associated to $Y$, respectively. They satisfy that $\e_Y\otimes \e_Y\simeq \e_Y$ and $\f_Y\otimes \f_Y\simeq \f_Y$ and $\e_Y\otimes \f_Y\simeq0$.  
\end{recollection}

We conclude this subsection with the following properties satisfied by the right and left idempotents associated to different Thomason subsets. 

\begin{recollection}\label{rec-tensoring-idempotents}
   Let $Y_1,Y_2\subseteq \Spc(\T^\omega)$ be Thomason subsets. Let us recall the following properties. 
  \begin{enumerate}
      \item $\e_{Y_1}\otimes \f_{Y_2}\simeq0$ if and only if $Y_1\subseteq Y_2$. 
      \item $\e_{Y_1}\otimes \e_{Y_2}\simeq\e_{Y_1 \cap Y_2}$. 
      \item $\f_{Y_1}\otimes \f_{Y_2}\simeq\f_{Y_1\cup Y_2}$. 
  \end{enumerate}
\end{recollection}

\subsection{Purity in triangulated categories}
In this subsection we let  $\T$ denote a compactly generated triangulated category. 

\begin{recollection}
Let  $\ModT$ denote the category of (right) modules over $\T^\omega$. This is a Grothendieck category. The finitely presented objects of $\ModT$ are denoted by $\mathrm{mod}\textrm{-}\T^\omega$. 
Consider the restricted Yoneda functor 
\[
  \T\to \mathrm{Mod}\textrm{-}\T^\omega, \quad x\mapsto \Hom_\T(-,x)|_{\T^\omega}.
\]
When the context is clear we will simply write $\hat{x}$ and $\hat{f}$ to denote the image of an object $x$ and a map $f$ in $\T$ under the restricted Yoneda functor, respectively.   
\end{recollection}

\begin{warning}
    Recall that, in general, the restricted Yoneda functor is neither full nor faithful 
\end{warning}

We are now ready to recall the relevant notions about purity that we will need in the following sections.

\begin{definition}
    Let $\T$ be a compactly generated triangulated catagory. A morphism $f$ in $\T$ is called \textit{pure monomorphism} if the induced map $\hat{f}$ is a monomorphism in $\ModT$. We say that an object $x$ in $\T$ is \textit{pure-injective} if any pure monomorphism $x\to y$ in $\T$ splits. 
\end{definition}

The following lemma provides a useful characterization of pure-injective objects in a compactly generated triangulated category; see \cite[Section 1]{Kra00}. 

\begin{lemma}
     The following properties are equivalent for an object $x$ in $\T$. 
    \begin{enumerate}
        \item $x$ is pure-injective. 
        \item $\hat{x}$ is injective in $\ModT$.
        \item $\mathrm{Hom}_\T(y,x) \to \mathrm{Hom}(\hat y, \hat x)$ is a bijection for all $y$ in $\T$. 
         \item For any set $I$, we have a commutative triangle 
\begin{center}
    \begin{tikzcd}
        \coprod_I x \arrow[r, hook] \arrow[d] & \prod_I x \arrow[ld,dashrightarrow] \\ x & 
    \end{tikzcd}
\end{center}
where the vertical map is the summation map. 
    \end{enumerate}
\end{lemma}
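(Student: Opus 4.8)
These four conditions are the standard characterizations of pure-injectivity, due to Krause \cite[\S1]{Kra00}; here is the plan I would follow. Write $\hat{(-)}\colon\T\to\ModT$ for the restricted Yoneda functor. First I would record its two elementary features: it is homological (a triangle goes to a long exact sequence, since each $\Hom_\T(c,-)$ with $c$ compact is homological), and by the Yoneda lemma $\Hom_\T(c,z)\to\Hom_{\ModT}(\hat c,\hat z)$ is bijective for $c\in\T^\omega$ and all $z$. Because $\hat{(-)}$ preserves coproducts (compacts are small) and both $\Hom_\T(p,-)$ and $\Hom_{\ModT}(\hat p,-)$ turn coproducts in the first variable into products, this upgrades to: $\Hom_\T(p,z)\to\Hom_{\ModT}(\hat p,\hat z)$ is bijective whenever $p$ is a coproduct of compact objects; in particular a \emph{phantom} map---one annihilated by $\hat{(-)}$---out of a coproduct of compacts is zero. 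I would also use the existence of \emph{pure-projective resolutions}: a $\hat{(-)}$-epimorphism $p_0\to y$ out of a coproduct of compacts, iterated so as to resolve $\hat y$ by coproducts of representables; concretely $y$ is built as a homotopy colimit of such $p_0$'s in a way that computes $\hat y$ as the corresponding colimit in $\ModT$.

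The plan is then to close the loop (a) $\Rightarrow$ (b) $\Rightarrow$ (c) $\Rightarrow$ (a) and to prove (a) $\Leftrightarrow$ (d) on the side. The easy steps are (c) $\Rightarrow$ (a) and (b) $\Rightarrow$ (c). For (c) $\Rightarrow$ (a): given a pure monomorphism $\alpha\colon x\to y$, fit it into a triangle $w\xrightarrow{\delta}x\xrightarrow{\alpha}y\to\Sigma w$; applying the homological functor $\hat{(-)}$ gives an exact sequence in which $\hat\alpha$ is a monomorphism, whence $\hat\delta=0$, i.e.\ $\delta$ is a phantom map into $x$, hence zero by (c) (the kernel of the comparison map in (c) is precisely the phantom maps into $x$). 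A triangle in which one map vanishes is split, so $\alpha$ splits. For (b) $\Rightarrow$ (c): assuming $\hat x$ injective, surjectivity of $\Hom_\T(y,x)\to\Hom_{\ModT}(\hat y,\hat x)$ follows by realizing a given natural transformation on a pure-projective resolution of $y$ (lift its composite with $\widehat{p_0\to y}$ to an honest map $p_0\to x$, check it kills the attaching map up to a phantom out of a coproduct of compacts, hence exactly, and descend to $y$, using injectivity of $\hat x$ at the higher stages), while injectivity of the comparison map is exactly the vanishing of all phantom maps into $x$, which I would deduce from injectivity of $\hat x$ by a diagram chase along the resolution. For (a) $\Leftrightarrow$ (d): from (a) a short triangle argument shows $x$ is injective with respect to \emph{all} pure monomorphisms (complete $A\to x\oplus B$ to a triangle, note the inclusion $x\to C$ is again a pure monomorphism, split it, and read off the extension); since $\widehat{\coprod_I x}=\coprod_I\hat x\hookrightarrow\prod_I\hat x=\widehat{\prod_I x}$ is a pure monomorphism, the summation map $\coprod_I x\to x$ extends along it, which is (d). The converse (d) $\Rightarrow$ (a) is the Jensen--Lenzing argument transported to $\T$: from a pure monomorphism $x\to y$ one manufactures a retraction out of the factorization of a suitable summation map $\coprod_I x\to\prod_I x\to x$, the purity hypothesis ensuring that the tuples involved are finitely supported.

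The substantive step, and the one I expect to be the main obstacle, is (a) $\Rightarrow$ (b)---the existence of a ``pure-injective hull''. Since $\ModT$ is a Grothendieck category, embed $\hat x$ into its injective envelope $I$. The functor $\Hom_{\ModT}(\hat{(-)},I)\colon\T^{\mathrm{op}}\to\mathrm{Ab}$ is cohomological (the composite of the homological $\hat{(-)}$ with the exact contravariant $\Hom_{\ModT}(-,I)$, exactness using that $I$ is injective) and sends coproducts to products; by the Brown representability theorem it is represented by an object $E\in\T$, and evaluating at compact objects and applying the Yoneda lemma in $\ModT$ produces a natural isomorphism $\hat E\cong I$. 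By the surjectivity discussed above (valid since $\hat E$ is injective), the inclusion $\hat x\hookrightarrow I\cong\hat E$ is induced by a morphism $\iota\colon x\to E$, which is then a pure monomorphism since $\hat\iota$ is a monomorphism. As $x$ is pure-injective, $\iota$ splits, exhibiting $\hat x$ as a retract of the injective object $\hat E$; hence $\hat x$ is injective, which is (b). The genuinely non-formal ingredients are thus Brown representability for $\T$ together with the basic structure theory of the Grothendieck category $\ModT$ (existence of injective envelopes, and the diagram chasing underlying the surjectivity and phantom-vanishing claims); everything else is bookkeeping with phantom maps and pure-projective resolutions.
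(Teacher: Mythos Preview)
The paper does not actually prove this lemma; it only states it with a pointer to \cite[Section~1]{Kra00}. Your proposal is essentially a correct expansion of Krause's argument, and the logical scaffolding you describe---Brown representability to realize the injective envelope of $\hat x$ as $\hat E$ for some $E\in\T$, then splitting the resulting pure monomorphism $x\to E$---is exactly the heart of the matter for (a)~$\Rightarrow$~(b).

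There is one soft spot. Your route for (b)~$\Rightarrow$~(c) via pure-projective resolutions is not obviously convergent: after lifting along $p_0\to y$ you obtain a phantom $y_1\to x$, but $y_1$ is not a coproduct of compacts, so you cannot conclude it vanishes; iterating only shows the map lies in arbitrarily high powers of the phantom ideal, which need not be zero. The clean argument (and the one Krause uses) is the same d\'evissage you already deploy elsewhere: once $\hat x$ is injective, both $\Hom_\T(-,x)$ and $\Hom_{\ModT}(\hat{(-)},\hat x)$ are cohomological, send coproducts to products, and agree on compacts, hence agree on all of $\T$. Similarly, for (d)~$\Rightarrow$~(a) the Jensen--Lenzing argument is most safely run in $\ModT$ rather than ``transported to $\T$'': condition~(d) for $x$ yields condition~(d) for $\hat x$, hence $\hat x$ is pure-injective in $\ModT$; combined with the fact that every $\hat y$ is fp-injective (a consequence of $\T^\omega$ being triangulated, so that $\mmod\T^\omega$ is Frobenius), this gives $\hat x$ injective, i.e.\ (b). You should make this fp-injectivity step explicit, as it is the non-formal ingredient that closes the loop.
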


\begin{remark}\label{pure-injectives preserved under bicon}
    From the last property we deduce that pure-injective objects are preserved under bicontinuous functors between compactly generated triangulated categories. For instance, if $f^\ast\colon \T\to \S$ is a geometric functor, then $f_\ast$ preserves pure-injective objects. 
\end{remark}

\begin{remark}
    Note that the previous lemma gives us in particular that the full subcategory of $\T$ on pure-injective objects is equivalent to the full subcategory of $\ModT$ on injective modules.  
\end{remark}

\begin{definition}
    A morphism $f\colon x \to y$ in $\T$ is called \textit{phantom} if $\hat{f}=0$ in $ \mathrm{Mod}\textrm{-}\T^\omega$.
\end{definition}

The following result describe pure-injective objects via phantom maps, more details are given in  \cite[Lemma 1.4]{Kra00}.

\begin{lemma}\label{relation between purity and phantoms}
    Let $x$ be an object in $\T$. Then $x$ is pure-injective if and only if every phantom map $f\colon y\to x$ is trivial. 
\end{lemma}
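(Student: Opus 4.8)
The plan is to prove \cref{relation between purity and phantoms}: an object $x$ in a compactly generated triangulated category $\T$ is pure-injective if and only if every phantom map $f\colon y \to x$ is zero. The main tool is the characterization that $x$ is pure-injective if and only if the restricted Yoneda map $\Hom_\T(y,x) \to \Hom_{\ModT}(\hat y, \hat x)$ is a bijection for every $y \in \T$, which is part (c) of the preceding lemma.

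For the forward direction, suppose $x$ is pure-injective and let $f\colon y \to x$ be phantom, so $\hat f = 0$. Since $x$ is pure-injective, the map $\Hom_\T(y,x) \to \Hom_{\ModT}(\hat y,\hat x)$ is injective, and it sends $f$ to $\hat f = 0$; hence $f = 0$. So every phantom into $x$ vanishes.

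For the converse, assume every phantom $y \to x$ is trivial. I want to show the restricted Yoneda map $\Hom_\T(y,x) \to \Hom_{\ModT}(\hat y, \hat x)$ is a bijection for all $y$, and then invoke part (c) of the earlier lemma. Injectivity is immediate: if $\hat f = 0$ then $f$ is phantom, hence $f = 0$ by hypothesis. The real content is surjectivity: given a module map $\varphi\colon \hat y \to \hat x$, one must lift it to an honest morphism $y \to x$ in $\T$. The standard argument (following Krause) is to choose a presentation of the situation via the triangulated structure. Using that $\ModT$ is a Grothendieck category with enough projectives among the representables (the restricted Yoneda images of compacts), one constructs a homotopy-colimit/telescope resolution of $y$ built from compact objects, say a sequence $y_0 \to y_1 \to \cdots$ with $\hocolim y_i \simeq y$ and each $\hat y_i$ projective; applying $\varphi$ levelwise and lifting along the representables gives a compatible family of maps $y_i \to x$, and these assemble (again using that phantom maps vanish to control the $\lim^1$ obstruction, or rather the difference of two lifts) into a map $y \to x$ inducing $\varphi$. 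I expect the construction of this lift — carefully setting up the telescope, checking that $\varphi$ can be realized on each finite stage, and then using the hypothesis on phantoms to kill the ambiguity in passing to the colimit — to be the main obstacle; everything else is formal manipulation with the restricted Yoneda functor. Once surjectivity and injectivity are established, part (c) of the preceding lemma applies and concludes that $x$ is pure-injective.
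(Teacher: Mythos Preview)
Your forward direction is fine. For the converse, however, you are taking an unnecessarily circuitous route through characterization (c), and the key surjectivity step is left as a sketch whose details you yourself flag as ``the main obstacle''. In fact the telescope argument, as stated, is not quite the right shape: the place where the phantom hypothesis enters is not a $\lim^1$ ambiguity but rather a factorization obstruction. A cleaner version of your idea is to take a single map $p\colon P\to y$ with $P$ a coproduct of compacts and $\hat p$ epi, complete to a triangle $K\to P\xrightarrow{p} y$, lift $\varphi\circ\hat p$ to $g\colon P\to x$ (possible since restricted Yoneda is fully faithful on coproducts of compacts), observe that $g|_K$ is phantom because $K\to P\to y$ is zero, conclude $g|_K=0$ by hypothesis, and hence factor $g$ through $f\colon y\to x$ with $\hat f=\varphi$. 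This works, but is still more than is needed.

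The paper simply cites \cite[Lemma~1.4]{Kra00}, and Krause's argument there is far more direct: one uses the elementary fact (Krause's Lemma~1.3, and exactly the pattern the paper repeats in \cref{relation gphan gmono gepi} and \cref{ginj kill g-phantoms} for the geometric version) that in a triangle $w\xrightarrow{f} x\xrightarrow{g} z\to\Sigma w$, the map $f$ is phantom if and only if $g$ is a pure monomorphism. Granting this, pure-injectivity of $x$ (every pure mono $x\to z$ splits) is immediately equivalent to every phantom $w\to x$ being zero, since a map in a triangle is zero precisely when the next map is a split monomorphism. No lifting, no towers, no characterization (c). You have essentially reproved a harder equivalent of the statement rather than using the triangle directly.
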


We conclude this subsection by recalling the notion of a definable subcategory of a compactly generated triangulated  category $\T$. 

\begin{recollection}\label{rec-definable-cat}
     A full subcategory $\mathcal{D}$ of $\T$ is said \textit{definable} if there is a collection of coherent functors $\{F_i\colon \T\to \mathrm{Ab}\}_{i\in I}$ such that $x$ is in $\mathcal{D}$ if and only if $F_i(x)=0$ for all $i\in I$. Recall that a functor $F\colon \T\to \mathrm{Ab}$ is \textit{coherent} if it is additive and  admits a presentation of the form 
     \[
     \mathrm{Hom}_\T(x,-)\xrightarrow[]{\Hom(f,-)} \mathrm{Hom}_\T(y,-)\to F\to 0
     \]
     where $f\colon x\to y$ is a map in  $\T^\omega$. The category of coherent functors on $\T$ is denoted by $\mathrm{Coh}(\T)$. 
\end{recollection}

\begin{remark}
    Note that a definable subcategory of $\T$ is not assumed to be triangulated. 
\end{remark}

\section{Geometric purity in tt-categories}

Let $\T$ be a rigidly-compactly generated tt-category. In this section, we study the behavior of purity with respect to a certain  class of finite localizations arising from triangular primes of $\T^\omega$. We need some preparations in order to make this precise. 

\begin{notation}\label{notation-qo}
    For a topological space $X$, we write $\mathcal{QO}(X)$ to denote the collection of quasi-compact open subsets of $X$. For a rigidly-compactly generated tt-category $\T$, we let $\mathcal{QO}(\T)\coloneqq \mathcal{QO}(\mathrm{Spc}(\T^\omega))$.
\end{notation}

\begin{notation}
 For a point  $\P$ of $\Spc(\T^\omega)$, we write $\mathrm{gen}(\P)$ to denote the generalization closure of $\P$. That is, 
     \[
     \mathrm{gen}(\P)\coloneqq \{\Q\in \Spc(\T^\omega)\mid \P\subseteq \Q\}. 
     \]
     In particular, note that $\mathrm{gen}(\P)^c=\supp(\P)$ which is Thomason subset of $\T$. 
\end{notation}

\begin{remark}\label{rem-generalizations}
     Since $\Spc(\T^\omega)$ is a spectral space, the generalization closure of a point $\P$ in $\Spc(\T^\omega)$ is given by 
    \[
    \bigcap_{\P\in U\in \mathcal{QO}(\T)} U.
    \]
\end{remark}

\begin{definition}
    Let $\P$ be point of $\Spc(\T^\omega)$. We define the \textit{tt-stalk}  of $\T$ at $\P$ to be the rigidly-compactly generated category $\T_\P\coloneqq\T(\mathrm{gen}(\P))$ associated to the Thomason subset $\supp(\P)$; see  \cref{rec-category-thomason}. 
\end{definition}

\begin{remark}
 Note that the category $\T_{\supp(\P)}$ is precisely $\loc(\P)$, and hence we can identify $\T_\P$ with the localization 
 \[
 \iota_\P^\ast\coloneqq \iota_{\supp(\P)}^\ast \colon\T\to \T/\loc(\P). 
 \]
 In particular, we can identify $\iota_{\P,\ast}\circ\iota^\ast_\P$ with $\f_{\mathrm{supp}(\P)}\otimes-$. 
\end{remark}

\begin{notation}
    We write $\f_{\P}$ to denote $\f_{\supp(\P)}$ and $\e_{\P}$ to denote $\e_{\supp(\P)}$. 
\end{notation}

Let us recall the definition of a \textit{local} rigidly-compactly generated tt-category. 

\begin{definition}
    Let $\T$ be a rigidly-compactly generated  tt-category. We say that $\T$ is \textit{local} if its rigid-compact part $\T^\omega$ is local, that is, if the zero tt-ideal $(0)$ in $\T^\omega$ is prime. 
\end{definition}

\begin{example}
    The tt-stalk $\T_\P$ is local for any point $\P$ in $\Spc(\T^\omega)$.
\end{example}

\begin{definition}\label{def-gpurity}
    Let $\T$ be a rigidly-compatly generated  tt-category. Recall the localization functor $\iota_\P^\ast\colon \T\to \T_\P$.  A map $f$ in $\T$ is called: 
\begin{enumerate}
    \item   \textit{geometrically phantom}  if  $\iota^\ast_\P(f)$ is a phantom map in $\T_\P$, for each point $\P$ of $\mathrm{Spc}(\T^\omega)$;
     \item a \textit{geometrically pure monomorphism}  if $\iota^\ast_\P(f)$ is a pure monomorphism in $\T_\P$, for each point $\P$ in $\mathrm{Spc}(\T^\omega)$;
     \item a \textit{geometrically pure epimorphism} if $\iota^\ast_\P(f)$ is a pure epimorphism in $\T_\P$, for each point $\P$ in $\mathrm{Spc}(\T^\omega)$.
\end{enumerate}
Moreover, we say that an object $x$ in $\T$ is  \emph{geometrically-pure-injective}  if any geometrically pure monomorphism $ x \to y $ in $\T$ splits.
\end{definition}

\begin{convention}
    In order to simplify terminology, we abbreviate geometrically-\textit{something} by g-\textit{something}.
\end{convention}

\begin{warning}\label{war-purity-gpurity}
While the functor $\iota_\P^\ast$ preserves pure-triangles\footnote{exact triangles where one of the maps is a pure monomorphism.}, as we will see, there is no reason to expect that it reflects pure-triangles. Later in this document, we will present an example where this indeed does not happen, thereby justifying the relevance of the previous definition.
\end{warning}

\begin{remark}\label{rem-local-gpurity}
    Note that if the category $\T$ is local, then g-purity coincides with purity. However, the converse is not true in general, as we will see in  \cref{pure=gpure affine}. 
\end{remark}

\begin{lemma}\label{relation gphan gmono gepi}
    Let $x\xrightarrow[]{f} x' \xrightarrow[]{f'} x'' \xrightarrow[]{f''} \Sigma x$ be an exact triangle in $\T$. Then the following properties are equivalent: 
    \begin{enumerate}
        \item $f$ is a g-phantom. 
        \item  $f'$ is a g-pure monomorphism.
        \item $f''$ is a g-pure epimorphism. 
    \end{enumerate}
\end{lemma}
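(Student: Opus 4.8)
The statement is purely local: it asserts the equivalence of (a), (b), (c) for a fixed exact triangle, where each condition is "the analogous condition holds after applying $\iota_\P^\ast$ for every $\P$." So the natural strategy is to reduce to the known non-geometric statement. Recall (this is essentially \cref{relation between purity and phantoms} together with the standard theory of purity recalled in the Preliminaries) that in any compactly generated triangulated category, for an exact triangle $a \xrightarrow{g} b \xrightarrow{g'} c \xrightarrow{g''} \Sigma a$, the following are equivalent: $g$ is phantom; $g'$ is a pure monomorphism; $g''$ is a pure epimorphism. This is because applying the restricted Yoneda functor $(-)^{\widehat{\;}}$ to the triangle gives a long exact sequence in $\ModT$, and $\widehat{g} = 0$ is equivalent to the short exactness of $0 \to \widehat{b} \to \widehat{c} \to \widehat{\Sigma a} \to 0$ splitting off, i.e.\ to $\widehat{g'}$ being mono and (by the same token, rotating) $\widehat{g''}$ being epi. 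I would either cite this or include the one-line Yoneda argument.

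\textbf{Key steps.} First I would note that since $\iota_\P^\ast$ is a triangulated functor, it sends the given exact triangle $x \xrightarrow{f} x' \xrightarrow{f'} x'' \xrightarrow{f''} \Sigma x$ to an exact triangle
\[
\iota_\P^\ast x \xrightarrow{\iota_\P^\ast f} \iota_\P^\ast x' \xrightarrow{\iota_\P^\ast f'} \iota_\P^\ast x'' \xrightarrow{\iota_\P^\ast f''} \Sigma \iota_\P^\ast x
\]
in the compactly generated triangulated category $\T_\P$ (here I use that $\iota_\P^\ast$ is geometric, hence in particular exact and that $\T_\P$ is rigidly-compactly generated). Second, I would apply the local equivalence recalled above inside $\T_\P$ to this triangle: $\iota_\P^\ast f$ is phantom $\iff$ $\iota_\P^\ast f'$ is a pure monomorphism $\iff$ $\iota_\P^\ast f''$ is a pure epimorphism. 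Third, I would simply quantify over all $\P \in \Spc(\T^\omega)$: by \cref{def-gpurity}, "$\iota_\P^\ast f$ is phantom for all $\P$" is the definition of "$f$ is g-phantom", and likewise for g-pure monomorphism and g-pure epimorphism. Since the three conditions are equivalent $\P$ by $\P$, the universally quantified versions are equivalent, which is exactly (a) $\iff$ (b) $\iff$ (c).

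\textbf{Main obstacle.} There is essentially no obstacle here — this is a formal consequence of the pointwise definition of g-purity plus the classical triangulated-category fact applied in each $\T_\P$. The only point that requires a word of care is that each $\T_\P$ really is a compactly generated triangulated category so that the notions "phantom", "pure monomorphism", "pure epimorphism" make sense there and the classical equivalence applies; this is guaranteed by \cref{rec-category-thomason}, which tells us $\T_\P = \T/\loc(\P)$ arises from a finite localization and is therefore again rigidly-compactly generated. One should also make sure the relevant "equivalence for one fixed triangle" is stated in a form that does not quantify over ambient structure — but since phantomness of $g$, and the pure-mono/epi properties of $g'$, $g''$, are all detected by applying the single restricted Yoneda functor to the single triangle, this is immediate.
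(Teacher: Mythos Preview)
Your proposal is correct and matches the paper's own proof essentially verbatim: the paper simply invokes the classical equivalence of phantom/pure mono/pure epi for an exact triangle in a compactly generated triangulated category (citing \cite[Lemma 1.3]{Kra00}) and then uses exactness of each localization $\iota_\P^\ast$ to reduce to that case. Your additional remark that $\T_\P$ is again rigidly-compactly generated (via \cref{rec-category-thomason}) is a reasonable point of care that the paper leaves implicit.
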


\begin{proof}
   The same is true for phantoms, pure monomorphisms and pure epimorphisms by looking at long exact sequences induced by exact triangles  (e.g., see \cite[Lemma 1.3]{Kra00}). Hence result follows by the exactness of  the localizations $\iota_\P^\ast\colon \T\to \T_\P$.  
\end{proof}

The previous lemma gives us a relation between the g-pure-injective objects and g-phantoms:

\begin{corollary}\label{ginj kill g-phantoms}
    Let $x$ be an object in $\T$. Then $x$ is g-pure-injective if and only if every g-phantom $y\to x$ is trivial. 
\end{corollary}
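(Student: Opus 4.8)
The plan is to mimic, at the level of g-purity, the proof of \cref{relation between purity and phantoms} (Krause's \cite[Lemma 1.4]{Kra00}), using \cref{relation gphan gmono gepi} as the triangulated bookkeeping tool that converts statements about g-phantoms into statements about g-pure monomorphisms.

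First I would prove the forward implication. Suppose $x$ is g-pure-injective and let $g\colon y \to x$ be a g-phantom. Complete $g$ to an exact triangle; more precisely, consider the exact triangle $x \xrightarrow{\binom{1}{0}} x \oplus \Sigma^{-1}\!\operatorname{cone}(g)$ is not quite the right move, so instead I would argue as follows. Form the exact triangle $\Sigma^{-1} z \xrightarrow{h} y \xrightarrow{g} x \to z$ — no: the cleanest route is to rotate so that $g$ appears as the \emph{first} map of a triangle, i.e.\ take the triangle $y \xrightarrow{g} x \xrightarrow{f'} w \xrightarrow{f''} \Sigma y$. By \cref{relation gphan gmono gepi}, since $g$ is a g-phantom, $f'\colon x \to w$ is a g-pure monomorphism. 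Because $x$ is g-pure-injective, $f'$ splits, so there is $r\colon w \to x$ with $rf' = \id_x$. Then $f'' = 0$ by a standard diagram chase in the triangle (a split monomorphism has vanishing connecting map), and hence the triangle forces $g$ to be (a direct summand inclusion composed with) zero; concretely, $f' g$ factors as part of the split triangle, and one reads off $g = 0$ from $x \xrightarrow{f'} w$ being split mono together with $f' g = 0$ — wait, one needs $f'g=0$, which holds because it is the composite of two consecutive maps in the triangle $y \xrightarrow{g} x \xrightarrow{f'} w$. So $f' g = 0$ and $f'$ split mono give $g = 0$. This is the same argument as in the non-geometric case, transported through $\iota_\P^\ast$.

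For the converse, suppose every g-phantom into $x$ is trivial, and let $f'\colon x \to y$ be a g-pure monomorphism; I must show it splits. Embed $f'$ into an exact triangle $w \xrightarrow{g} x \xrightarrow{f'} y \xrightarrow{f''} \Sigma w$. By \cref{relation gphan gmono gepi} (applied to the rotated triangle, identifying $f'$ as the ``pure monomorphism'' position), the map $g\colon w \to x$ is a g-phantom. By hypothesis $g = 0$, so the triangle $w \xrightarrow{0} x \xrightarrow{f'} y \to \Sigma w$ has zero first map, which forces $y \simeq x \oplus \Sigma w$ with $f'$ the canonical inclusion; in particular $f'$ is a split monomorphism. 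Hence $x$ is g-pure-injective.

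I do not expect a genuine obstacle here: the whole statement is the g-analogue of an established lemma, and the only ``new'' input is that \cref{relation gphan gmono gepi} already packages the needed equivalences for g-phantoms, g-pure monomorphisms, and g-pure epimorphisms. The one point requiring a little care — the step I would be most careful about writing up — is the purely triangulated fact that a map $g$ which is the first edge of an exact triangle $w \xrightarrow{g} x \to y \to \Sigma w$ is zero if and only if that triangle splits (equivalently $f'$ is split mono, equivalently $f''$ is split epi); this is standard (e.g.\ it follows from the long exact sequence of $\Hom(-,x)$ applied to the triangle, exactly as in \cite[Lemma 1.3]{Kra00}), but I would state it explicitly rather than leave it implicit, so that the reader sees precisely where ``$g$ g-phantom'' gets upgraded to ``$f'$ splits'' without invoking g-purity a second time.
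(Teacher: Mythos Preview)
Your argument is correct and is exactly the approach the paper takes: the paper's proof is the one-line observation that the result follows from \cref{relation gphan gmono gepi} together with the standard fact that an exact triangle splits if and only if one of its maps is trivial. Your write-up simply unpacks this, spelling out both directions and the split-triangle fact explicitly; apart from the false starts (which you should excise), there is nothing to change.
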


\begin{proof}
    This follows from \cref{relation gphan gmono gepi} and the fact that an exact triangle splits if and only if one of the maps is trivial.  
\end{proof}

\begin{proposition}\label{prop-phantom-gphantom}
     Let $\iota^\ast\colon \T\to \U$ be a geometric functor between rigidly-compactly generated tt-categories. If $f$ is a phantom map in $\T$, then $\iota^\ast(f)$ is a phantom map in $\U$.  
\end{proposition}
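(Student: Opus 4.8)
The plan is to avoid ever trying to lift compact objects or morphisms along $\iota^\ast$, and instead to detect phantomness through pure-injective objects, using only the adjunction $\iota^\ast \dashv \iota_\ast$ and the fact that $\iota_\ast$ is bicontinuous. The key preliminary observation, valid in any compactly generated triangulated category $\S$, is the following dual characterisation: a morphism $g\colon u\to v$ in $\S$ is phantom if and only if $\Hom_\S(g,q)=0$ for every pure-injective object $q$ of $\S$. I would prove both directions from the characterisation of pure-injectives recalled above (and \cite[Section 1]{Kra00}). For the forward direction: if $q$ is pure-injective then $\Hom_\S(w,q)\to\Hom_{\MMod\S^\omega}(\hat w,\hat q)$ is bijective for every $w$, so $\Hom_\S(g,q)$ is identified with precomposition by $\hat g$; when $g$ is phantom, $\hat g=0$ and hence $\Hom_\S(g,q)=0$. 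For the converse: since $\MMod\S^\omega$ is Grothendieck, pick a monomorphism $\hat v\hookrightarrow E$ into an injective object, and note that $E\cong\hat q$ for some pure-injective $q$ because injective $\S^\omega$-modules are exactly the restricted-Yoneda images of pure-injectives; the monomorphism $\hat v\hookrightarrow\hat q$ comes from a morphism $j\colon v\to q$, and $\widehat{j\circ g}=\hat j\circ\hat g$ equals the image of $j$ under $\Hom_\S(g,q)$, which is $0$ by hypothesis; as $\hat j$ is monic this forces $\hat g=0$, i.e.\ $g$ is phantom.

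With this in hand, the main argument is short. Let $f\colon x\to y$ be a phantom map in $\T$, and let $q$ be an arbitrary pure-injective object of $\U$. By \cref{rec-geometric-functors} we have the adjunction $\iota^\ast\dashv\iota_\ast$, which yields a natural identification
\[
\Hom_\U(\iota^\ast f,q)\;\cong\;\Hom_\T(f,\iota_\ast q).
\]
By \cref{pure-injectives preserved under bicon} the object $\iota_\ast q$ is pure-injective in $\T$, since $\iota_\ast$ is bicontinuous (it has the right adjoint $\iota^!$). Applying the forward direction of the dual characterisation in $\T$, we get $\Hom_\T(f,\iota_\ast q)=0$, hence $\Hom_\U(\iota^\ast f,q)=0$. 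Since $q$ was an arbitrary pure-injective of $\U$, the converse direction of the dual characterisation, now applied in $\U$, shows that $\iota^\ast f$ is phantom, as required.

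I do not expect a genuine obstacle here: the whole proof is formal once the dual characterisation of phantoms is available. The only mildly delicate point is the ``detection'' half of that characterisation, where one must invoke that $\MMod\T^\omega$ and $\MMod\U^\omega$ are Grothendieck categories and that their injective objects are precisely the restricted-Yoneda images of pure-injective objects; everything else is a direct consequence of the adjunction $\iota^\ast\dashv\iota_\ast$ and of $\iota_\ast$ preserving pure-injectives.
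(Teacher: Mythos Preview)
Your argument is correct, but it takes a genuinely different route from the paper's proof. The paper simply observes that since $\iota^\ast$ preserves compact objects (being monoidal, hence preserving dualizables), restriction along $\iota^\ast|_{\T^\omega}$ gives a functor $\ModT\to\MMod\U^\omega$ making the evident square with the two restricted Yoneda functors commute; thus $\hat f=0$ forces $\widehat{\iota^\ast f}=0$ directly. Your approach instead dualises: you establish the characterisation ``$g$ is phantom iff $\Hom(g,q)=0$ for all pure-injective $q$'', and then transport the problem across the adjunction $\iota^\ast\dashv\iota_\ast$, using that $\iota_\ast$ preserves pure-injectives. The paper's argument is shorter and uses nothing beyond the definition of phantom and the fact that geometric functors preserve compacts. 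Your argument is a bit longer because it needs the auxiliary dual characterisation (whose converse half requires the existence of injective envelopes in $\MMod\S^\omega$ and the identification of injectives with $\hat q$ for $q$ pure-injective), but it has the conceptual advantage of never touching the module categories explicitly: everything happens at the level of the adjunction and pure-injectives, which is in the spirit of how several later results in the paper are proved.
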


\begin{proof}
Since $\iota^\ast$ is a geometric functor it preserves compact objects. Hence we obtain a functor $\ModT\to \Mod\kern-0.1em\text{-}\U^\omega$ which makes the following diagram commutative
\[
\begin{tikzcd}
\T \arrow{r}{} \arrow[swap]{d}{\iota^\ast} & \ModT \arrow{d} \\
\U \arrow{r}{} & \Mod\kern-0.1em\text{-}\U^\omega
\end{tikzcd}
\]
where the horizontal maps are the restricted Yoneda functors.  Hence, if $f$ is phantom, then $\iota^\ast(f)$ is phantom by the commutativity of the previous square.  
\end{proof}

\begin{corollary}\label{cor-phantom-gphantom}
     Let $\T$ be a big tt-category. Then any phantom map $f\colon x\to y$ in $\T$ is g-phantom.
\end{corollary}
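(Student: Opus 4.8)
The plan is to reduce \cref{cor-phantom-gphantom} directly to \cref{prop-phantom-gphantom}. The key observation is that for each prime $\P \in \Spc(\T^\omega)$, the localization functor $\iota_\P^\ast \colon \T \to \T_\P$ is a geometric functor between rigidly-compactly generated tt-categories; this is exactly the content of \cref{rec-category-thomason} applied to the Thomason subset $\supp(\P)$, as recorded in the remark following the definition of the tt-stalk. So the statement is just the special case $\U = \T_\P$ of the previous proposition, quantified over all $\P$.

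Concretely, here is how I would carry it out. First, let $f\colon x \to y$ be a phantom map in $\T$, so $\hat f = 0$ in $\ModT$. Fix a point $\P \in \Spc(\T^\omega)$. Since $\iota_\P^\ast$ is geometric, \cref{prop-phantom-gphantom} applies and gives that $\iota_\P^\ast(f)$ is phantom in $\T_\P$. As $\P$ was arbitrary, $\iota_\P^\ast(f)$ is phantom in $\T_\P$ for \emph{every} $\P \in \Spc(\T^\omega)$, which is precisely the definition of $f$ being g-phantom (\cref{def-gpurity}(a)). This finishes the proof.

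There is really no main obstacle here: the content has already been extracted into \cref{prop-phantom-gphantom}, and the only thing to check is that the functors defining g-purity, namely the $\iota_\P^\ast$, are of the type to which that proposition applies — which is built into the construction of the tt-stalk via finite localization. If one wanted to be maximally self-contained one could instead argue directly, reproducing the commutative square
\[
\begin{tikzcd}
\T \arrow{r}{} \arrow[swap]{d}{\iota_\P^\ast} & \ModT \arrow{d} \\
\T_\P \arrow{r}{} & \Mod\kern-0.1em\text{-}(\T_\P)^\omega
\end{tikzcd}
\]
with horizontal restricted Yoneda functors and using that $\iota_\P^\ast$ preserves compacts, but this merely re-proves \cref{prop-phantom-gphantom} in the special case at hand. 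I would therefore keep the proof to the two lines above, citing \cref{prop-phantom-gphantom} and \cref{rec-category-thomason}.

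Finally, I note that combining this with \cref{relation gphan gmono gepi} immediately yields the analogous (and expected) statement that every pure monomorphism in $\T$ is a g-pure monomorphism and every pure epimorphism is a g-pure epimorphism — equivalently, every pure triangle is g-pure — which is the assertion made informally in the introduction; it may be worth recording this as a remark right after the corollary.
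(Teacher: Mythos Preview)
Your proof is correct and is essentially identical to the paper's own argument: both simply observe that each $\iota_\P^\ast$ is a geometric functor (by \cref{rec-category-thomason}) and apply \cref{prop-phantom-gphantom} with $\U=\T_\P$. The additional remarks you suggest (the commutative square and the consequence for pure mono/epimorphisms) are fine but go beyond what the paper records here.
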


\begin{proof}
    It is a particular case of the previous proposition applied to $\U=\T_\P$ and the localization functor $\iota_\P^\ast$. 
\end{proof}

In other words, there are more g-phantom morphisms that phantom ones. This suggests that g-purity is stronger that purity. Let us record this in the following corollary.

\begin{corollary}\label{cor-gpinj is pinj}
    If $x$ in $\T$ is a g-pure-injective, then it is pure-injective.
\end{corollary}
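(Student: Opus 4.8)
The plan is to derive this immediately from the contrapositive characterizations already established. Recall from \cref{relation between purity and phantoms} that an object $x$ is pure-injective precisely when every phantom map $f\colon y \to x$ vanishes, and from \cref{ginj kill g-phantoms} that $x$ is g-pure-injective precisely when every g-phantom map $y \to x$ vanishes. So the two notions are characterized by killing two classes of maps, and the containment between the classes controls the implication.

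The key step is \cref{cor-phantom-gphantom}: every phantom map in $\T$ is in fact g-phantom. Thus the class of phantom maps with target $x$ is contained in the class of g-phantom maps with target $x$. If $x$ is g-pure-injective, then by \cref{ginj kill g-phantoms} every g-phantom map into $x$ is trivial; in particular every phantom map into $x$ is trivial, since such maps are a fortiori g-phantom. By \cref{relation between purity and phantoms}, this says exactly that $x$ is pure-injective. That completes the argument.

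There is no real obstacle here — the statement is a formal consequence of the two phantom characterizations together with the inclusion of classes, all of which have already been proved in the excerpt. The only thing to be careful about is invoking the right version of the "phantom detects (g-)pure-injectivity" lemma for each of the two notions, which is precisely the pairing of \cref{relation between purity and phantoms} with \cref{ginj kill g-phantoms}. One could also phrase it via splitting of triangles directly: a g-pure monomorphism $x \to y$ is in particular a pure monomorphism (since the latter is detected by a phantom map, which is g-phantom, hence split when $x$ is g-pure-injective), but routing through the phantom characterization is cleanest.

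\begin{proof}
    By \cref{relation between purity and phantoms}, it suffices to show that every phantom map $f\colon y\to x$ is trivial. By \cref{cor-phantom-gphantom}, such an $f$ is g-phantom. Since $x$ is g-pure-injective, \cref{ginj kill g-phantoms} gives that $f$ is trivial, as desired.
\end{proof}
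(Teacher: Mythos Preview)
Your formal proof is correct and essentially identical to the paper's: both use \cref{cor-phantom-gphantom} to see that any phantom map into $x$ is g-phantom, hence trivial by g-pure-injectivity, and conclude via \cref{relation between purity and phantoms}. One small slip in your informal commentary: the parenthetical claim that ``a g-pure monomorphism $x\to y$ is in particular a pure monomorphism'' has the implication backwards (pure monomorphisms are g-pure, not conversely, as the paper's \cref{g-pure vs g-pure á la PS} illustrates), though this does not affect the actual proof.
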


\begin{proof}
   Let $f\colon y\to x$ be a phantom map in $\T$. By \cref{cor-phantom-gphantom}, we obtain that the map $f$ is g-phantom. Then it follows that $f$ is trivial. The conclusion  now follows by  \cref{relation between purity and phantoms}.
\end{proof}

\begin{proposition}\label{prop-iota*-gpure-pure} 
 Let $\P\in \Spc(\T^\omega)$ and  $x$ be a pure-injective object in $\T_\P$. Then $\iota_{\P,\ast}(x)$ is g-pure-injective in $\T$. 
\end{proposition}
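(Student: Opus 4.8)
The plan is to use the characterization of g-pure-injectivity in terms of g-phantom maps recorded in \cref{ginj kill g-phantoms}: it suffices to show that every g-phantom map $f\colon y\to \iota_{\P,\ast}(x)$ in $\T$ is trivial. The strategy is to transport such an $f$ across the adjunction $\iota_\P^\ast\dashv\iota_{\P,\ast}$ to a phantom map into $x$ in $\T_\P$, and then invoke pure-injectivity of $x$ via \cref{relation between purity and phantoms}.

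In detail, I would first note that since $f$ is g-phantom, specializing the defining condition to the prime $\P$ itself yields that $\iota_\P^\ast(f)$ is a phantom map in $\T_\P$. Next, let $\tilde f\colon \iota_\P^\ast(y)\to x$ be the morphism corresponding to $f$ under the adjunction bijection $\Hom_\T(y,\iota_{\P,\ast}(x))\cong \Hom_{\T_\P}(\iota_\P^\ast(y),x)$; concretely $\tilde f=\epsilon_x\circ\iota_\P^\ast(f)$, where $\epsilon\colon\iota_\P^\ast\iota_{\P,\ast}\to\id$ is the counit. Since phantom maps in $\T_\P$ are closed under pre- and post-composition with arbitrary morphisms (immediate from functoriality of the restricted Yoneda functor), $\tilde f$ is again phantom. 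By hypothesis $x$ is pure-injective in $\T_\P$, so \cref{relation between purity and phantoms} forces $\tilde f=0$, and therefore $f=0$ by the adjunction bijection. This shows $\iota_{\P,\ast}(x)$ kills every g-phantom, hence is g-pure-injective.

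The argument is short, and I do not expect a serious obstacle; the only points requiring a little care are (i) recalling that phantom maps are closed under composition with arbitrary morphisms, which is what lets us pass from $\iota_\P^\ast(f)$ phantom to $\tilde f$ phantom, and (ii) keeping straight the direction of the adjunction so that $\tilde f=0$ genuinely implies $f=0$. An alternative, equally short route avoids g-phantoms: given a g-pure monomorphism $g\colon\iota_{\P,\ast}(x)\to y$, apply $\iota_\P^\ast$ and use that the counit $\epsilon_x\colon\iota_\P^\ast\iota_{\P,\ast}(x)\to x$ is an isomorphism (because $\iota_\P^\ast$ is a finite localization with fully faithful section $\iota_{\P,\ast}$) to see that $\iota_\P^\ast(g)$ is a pure monomorphism out of $x$; pure-injectivity of $x$ supplies a retraction, which one transports back along the adjunction and checks — via the triangle identities, noting $\iota_{\P,\ast}(\epsilon_x)\circ\eta_{\iota_{\P,\ast}(x)}=\id$ — to be a genuine retraction of $g$, so $g$ splits.
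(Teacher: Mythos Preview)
Your primary argument is correct and matches the paper's proof essentially line for line: both reduce to showing g-phantoms into $\iota_{\P,\ast}(x)$ vanish by passing across the adjunction to a phantom into $x$ and invoking pure-injectivity. The alternative route you sketch is also valid but is not the one the paper takes.
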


\begin{proof}
    Let $f\colon y\to \iota_{\P,\ast}(x)$ be a g-phantom map in $\T$. Using the adjunction $\iota_\P^\ast\dashv \iota_{\P,\ast}$ we get a morphism 
    \begin{equation}\label{eq-adjoint-f}
        \iota_\P^\ast(y)\xrightarrow[]{\iota_\P^\ast(f)} \iota_\P^\ast(\iota_{\P,\ast}(x))\xrightarrow[]{\epsilon_x} x
    \end{equation}
     where $\epsilon$ denotes the counit of the adjunction. In other words, $\epsilon\circ \iota_\P^\ast(f)$ is the unique morphism corresponding to $f$ via the adjuntion. 
     Now, since $f$ is g-phantom, $\iota_\P^\ast(f)$ is phantom. We deduce that the map from \cref{eq-adjoint-f} is phantom as well.  
     This map must be trivial since $x$ is pure-injective. Therefore $f$ must be trivial as well. We conclude  the result  by  \cref{ginj kill g-phantoms}. 
    \end{proof}

\subsection{An alternative definition of g-purity}
Before we proceed we need the following auxiliary result, which tells us that the right adjoint functor of a smashing localization of $\T$ \textit{preserves} purity. In fact, this amounts to saying that such a functor is \textit{definable}. This class of functors has been extensively studied in \cite{BW23}; their work also covers the case we are interested in. However, our proof is quite elementary, so we decided to include it. We refer the interested reader to \textit{loc. cit.} for an account of definable functors and an excellent discussion on the motivation to consider such functors.

\begin{proposition}\label{iota preserves definables}
 Let $\S$ be a smashing ideal of $\T$, and let $\iota_\S^\ast\colon \T\to \T/\S$ denote the corresponding localization functor.  Then the following properties hold. 
    \begin{enumerate}
        \item $\iota_{\S,\ast}$ preserves phantoms, pure monomorphisms, pure epimorphisms, and pure-injectives.
        \item $\iota_{\S,\ast}$ reflects pure-injective objects, that is, if $x$ is a pure-injective in the essential image of $\iota_{\S,\ast}$, then $\iota_\S^\ast(x)$ is pure-injective in $\T_\S$. 
        \item  $\iota_{\S,\ast}$ preserves and reflects definable subcategories.
    \end{enumerate}
\end{proposition}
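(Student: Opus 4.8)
The plan is to exploit the concrete description of a smashing localization from \cref{rec-smashing-ideals}: we may identify $\T/\S$ with the full subcategory $\S^\perp = L_\S(\mathbb 1)\otimes\T$ of $\T$, so that $\iota_{\S,\ast}$ becomes the inclusion $\S^\perp\hookrightarrow\T$ and $\iota_\S^\ast$ becomes $L_\S(\mathbb 1)\otimes-$. The first step is to recall that $(\T/\S)^\omega$ is a quotient of $\T^\omega$ (here one must be slightly careful, since an arbitrary smashing localization need not be a finite localization; but the restricted Yoneda functors still fit into a commuting square, because $\iota_\S^\ast$ preserves compacts up to idempotent completion and the induced functor $\ModT \to \MMod(\T/\S)^\omega$ is exact and coproduct-preserving). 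This square is the engine for part (a): exactly as in \cref{prop-phantom-gphantom}, a phantom map $f$ in $\S^\perp$, viewed as a map in $\T$, has $\hat f = 0$ in $\ModT$, and conversely the projection $\ModT\to \MMod(\T/\S)^\omega$ sends $\hat f = 0$ to $\widehat{\iota_\S^\ast f}=0$; combined with \cref{relation gphan gmono gepi}-style long exact sequence arguments (the analogue for ordinary purity, \cite[Lemma 1.3]{Kra00}), this gives preservation of phantoms, pure monos, and pure epis. Preservation of pure-injectives is then immediate from \cref{pure-injectives preserved under bicon}: $\iota_{\S,\ast}$ is the right adjoint of a geometric functor, hence bicontinuous, so it preserves pure-injectives. (Alternatively, a pure-injective $x\in\T/\S$ kills every phantom $g\colon y \to x$ in $\T/\S$; given a phantom $f\colon y'\to \iota_{\S,\ast}x$ in $\T$, the adjunct $\iota_\S^\ast y' \to x$ is phantom by the square, hence zero, hence $f$ is zero.)

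For part (b), suppose $x\in\T$ is pure-injective and lies in the essential image of $\iota_{\S,\ast}$, i.e.\ $x\simeq \iota_{\S,\ast}\iota_\S^\ast x$ (equivalently $x\in\S^\perp$). I want $\iota_\S^\ast x$ pure-injective in $\T/\S$. Take a phantom map $g\colon z\to \iota_\S^\ast x$ in $\T/\S$. Applying $\iota_{\S,\ast}$ and using part (a), $\iota_{\S,\ast}(g)\colon \iota_{\S,\ast}z \to \iota_{\S,\ast}\iota_\S^\ast x\simeq x$ is a phantom map in $\T$; since $x$ is pure-injective, $\iota_{\S,\ast}(g)=0$. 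Now $\iota_\S^\ast$ is a Verdier quotient, so $\iota_\S^\ast\iota_{\S,\ast}\simeq\id$ on $\T/\S$ (the counit is an isomorphism), and applying $\iota_\S^\ast$ recovers $g$ up to this isomorphism; hence $g=0$. By \cref{relation between purity and phantoms}, $\iota_\S^\ast x$ is pure-injective. This is the place where the hypothesis ``$x$ in the essential image'' is genuinely used, and where I expect the only real subtlety: one must be sure that $\iota_\S^\ast\iota_{\S,\ast}g$ is identified with $g$, which is exactly the statement that the counit $\iota_\S^\ast\iota_{\S,\ast}\Rightarrow\id$ is invertible for a Bousfield localization — standard, but worth citing.

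For part (c), recall from \cref{rec-definable-cat} that a definable subcategory is cut out by a family of coherent functors $\T\to\mathrm{Ab}$, and that coherent functors are built from representables $\Hom_\T(c,-)$ with $c$ compact. The key observation is that restriction along $\iota_\S^\ast$ and along its adjoints sets up a correspondence between $\mathrm{Coh}(\T)$ and $\mathrm{Coh}(\T/\S)$ compatible with the inclusion $\iota_{\S,\ast}$: a coherent functor $F$ on $\T/\S$ pulls back to the coherent functor $F\circ\iota_\S^\ast$ on $\T$ (coherence is preserved because $\iota_\S^\ast$ sends a compact presentation $a\to b$ in $\T^\omega$ to one in $(\T/\S)^\omega$, and every object of $(\T/\S)^\omega$ is a retract of one in the image of $\T^\omega$), and conversely a coherent functor $G$ on $\T$ restricts along $\iota_{\S,\ast}$ to a coherent functor on $\T/\S$ (using $\iota_{\S,\ast}\simeq$ inclusion of $\S^\perp$ and that $\Hom_\T(c,-)|_{\S^\perp}\simeq\Hom_{\T/\S}(\iota_\S^\ast c,-)$ by adjunction). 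Thus if $\D\subseteq\T$ is definable, cut out by $\{G_i\}$, then $\iota_{\S,\ast}^{-1}(\D) = \{x\in\T/\S : G_i(\iota_{\S,\ast}x)=0\}$ is definable in $\T/\S$ (reflection), and conversely if $\mathcal E\subseteq\T/\S$ is definable, cut out by $\{F_j\}$, one checks $\iota_{\S,\ast}(\mathcal E) = \{x\in\S^\perp : F_j(\iota_\S^\ast x)=0\}$ is the intersection of the definable subcategory $\{x\in\T : F_j(\iota_\S^\ast x)=0\}$ with the definable subcategory $\S^\perp = \{x : \e_\S\otimes x = 0\}$ (which is definable since $\e_\S\otimes-\simeq\Gamma_\S$ is a coherent-functor-type condition, or more carefully since $\S$ itself is a definable subcategory — smashing ideals are definable, cf.\ \cite{Kra00,BW23}), hence definable in $\T$ (preservation). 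The main obstacle throughout is bookkeeping around idempotent completion of $(\T/\S)^\omega$ and making sure the restricted-Yoneda square genuinely commutes for a general smashing (not merely finite) localization; once that square is in hand, everything else is a formal consequence of the adjunction $\iota_\S^\ast\dashv\iota_{\S,\ast}$ together with the fact that the counit is invertible.
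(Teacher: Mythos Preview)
For parts (a) and (b) your argument is essentially the paper's: both rest on the adjunction isomorphism $\Hom_\T(c,\iota_{\S,\ast}(-))\cong\Hom_{\T/\S}(\iota_\S^\ast c,-)$ together with the fact that $\iota_\S^\ast$ preserves compacts. You organize things through phantoms while the paper starts from pure monomorphisms, but these are interchangeable via \cite[Lemma~1.3]{Kra00}. (One quibble: \cref{prop-phantom-gphantom} is about the \emph{left} adjoint preserving phantoms, so that citation points the wrong way even though the mechanism you describe is correct.)

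For part (c) you diverge from the paper, and there is a gap in the preservation direction. The paper dispatches (c) in one line via \cite[Theorem~4.4]{Kra00}: definable subcategories are determined by their pure-injectives, so (a) and (b) do the work. Your coherent-functor bookkeeping is fine for \emph{reflection} (indeed $G\circ\iota_{\S,\ast}$ is coherent on $\T/\S$ for any coherent $G$ on $\T$, by the adjunction), but the claim that $F\circ\iota_\S^\ast$ is coherent on $\T$ for coherent $F$ on $\T/\S$ generally fails when $\S$ is not a finite localization: the functor $\Hom_{\T/\S}(\iota_\S^\ast a,\iota_\S^\ast(-))$ on $\T^\omega$ is the quotient of $\Hom_{\T^\omega}(a,-)$ by those maps factoring through $\S\cap\T^\omega$, and this subfunctor need not be finitely generated. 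Your identification $\iota_{\S,\ast}(\mathcal E)=\{x\in\S^\perp:F_j(\iota_\S^\ast x)=0\}$ is still correct and the right-hand side \emph{is} definable in $\T$, but to see this you should instead check closure under products, pure subobjects, and directed colimits directly---using that $\iota_{\S,\ast}$ preserves all three, that $\S^\perp$ is definable, and that $\iota_{\S,\ast}$ also \emph{reflects} pure monomorphisms via the same adjunction square. That is, in the end, the paper's argument unpacked.
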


\begin{proof}
   For part $(a)$, let $f\colon x\to y$ be a pure monomorphism in $\T/\S$. We claim that the map $\iota_{\S,\ast}(f)$ is a pure monomorphism in $\T$. Indeed,  via the naturality of the adjunction we get a commutative square for any $z\in \T^\omega$
   \begin{center}
     \begin{tikzcd}
        \Hom_\T(z,\iota_{\S,\ast}(x)) \arrow[r,"\simeq"] \arrow[d,"\widehat{\iota_{\S,\ast}(f)}(z)"'] & \Hom_{\T/\S}(\iota_\P^\ast(z),x) \arrow[d,"\hat{f}(z)"]  \\ 
        \Hom_\T(z,\iota_{\S,\ast}(y)) \arrow[r,"\simeq"] & \Hom_{\T/\S}(\iota_\P^\ast(z),y)
   \end{tikzcd}     
   \end{center}
Recall that  $\iota_\S^\ast$ preserves compact objects since it is  a geometric functor. Since  $\hat{f}$ is a monomorphism in the category of  $(\T/\S)^\omega$-modules, it follows that $\hat{f}(\iota_\P^\ast(z))$ is injective. By the commutativity of the square we deduce that $\widehat{\iota_{\S,\ast}(f)}(z)$ is injective as well. Hence  $\iota_{\S,\ast}(f)$ is a pure monomorphism in $\T$. Moreover,  the functor $\iota_{\S,\ast}$ preserves pure-injective objects by  \cref{prop-iota*-gpure-pure}. The rest follows by   \cref{relation gphan gmono gepi} and the exactness of $\iota_{\S,\ast}$.   

  Now, for part $(b)$, let $x$ be an object in $\T/\S$ such that $\iota_{\S,\ast}(x)$ is pure-injective. Consider a pure monomorphism $y\to x$ in $\T/\S$. By the first part, we have that $\iota_{\S,\ast}(y)\to \iota_{\S,\ast}(x)$ is pure monomorphism in $\T$. Since $\iota_{\S,\ast}(x)$ is pure-injective, such pure monomorphism must split. Finally, since $\iota_{\S,\ast}$ is fully faithful, such splitting occurs in $\T/\S$ as well. It follows that $x$ is pure-injective in $\T/\S$. 

  Part $(c)$ is an immediate consequence of part $(a)$ and $(b)$ since definable categories are completely determined by the pure-injective objects they contain; see \cite[Theorem 4.4]{Kra00}. 
\end{proof} 

Recall the right idempotent $\f_Y$ associated to Thomason subset $Y$ of $\Spc(\T^\omega)$; see \cref{rec-category-thomason}.

\begin{definition}[Alternative definition]\label{def-gpurity-alter}
    Let $\T$ be rigidly-compactly generated tt-category.  A map $f$ in $\T$ is called:
\begin{enumerate}
    \item   \textit{g-phantom}  if  $f\otimes \f_\P$ is a phantom map in $\T$, for each point $\P$ of $\mathrm{Spc}(\T^\omega)$;
     \item \textit{g-pure monomorphism}  if $f\otimes \f_\P$ is a pure monomorphism in $\T$, for each point $\P$ in $\mathrm{Spc}(\T^\omega)$;
     \item \textit{geometrically pure epimorphism} if $f\otimes \f_\P$ is a pure epimorphism in $\T_\P$, for each point $\P$ in $\mathrm{Spc}(\T^\omega)$.
\end{enumerate}
\end{definition}

\begin{warning}
      By \cite[Proposition 2.10]{BKS19}, we know that for any $x\in \T$, the functor $x\otimes-$ preserves pure monomorphisms, pure epimorphisms and phantom maps. However, $x\otimes-$ does not necessarily reflect purity.
\end{warning}

The following proposition shows that  \cref{def-gpurity-alter} is equivalent to  \cref{def-gpurity}. Hence, in the remainder of this manuscript, we will use them interchangeably.

\begin{proposition}
   Let $\T$ be a rigidly-compactly generated tt-category, and $\P$ be a point of $\Spc(\T^\omega)$. Let $f$ be a morphism in $\T$. Then $\iota^\ast_\P(f)$ is phantom in $\T_\P$ if and only if $f\otimes\f_\P$ is phantom in $\T$. 
\end{proposition}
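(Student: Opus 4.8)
The plan is to reduce the statement to the faithfulness of a restriction-of-scalars functor between module categories. Recall from \cref{rec-category-thomason} and the remarks following the definition of the tt-stalk that $\iota_{\P,\ast}\circ\iota_\P^\ast\simeq\f_\P\otimes-$; since $\T$ is symmetric monoidal this means $f\otimes\f_\P$ is canonically isomorphic to $\iota_{\P,\ast}(\iota_\P^\ast(f))$. Writing $g=\iota_\P^\ast(f)$, the assertion thus becomes: $g$ is phantom in $\T_\P$ if and only if $\iota_{\P,\ast}(g)$ is phantom in $\T$. The ``only if'' direction is already contained in \cref{iota preserves definables}(a), which says $\iota_{\P,\ast}$ preserves phantom maps; so the real content is the ``if'' direction, i.e. that $\iota_{\P,\ast}$ \emph{reflects} phantoms.

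First I would make the link with module categories explicit. The functor $\iota_\P^\ast$ is geometric, so it restricts to $\iota_\P^\ast|_{\T^\omega}\colon\T^\omega\to\T_\P^\omega$, which induces a restriction-of-scalars functor $\rho\colon\MMod\T_\P^\omega\to\ModT$. The commutative square displayed in the proof of \cref{iota preserves definables}(a) says precisely that, for all $z\in\T^\omega$ and $x\in\T_\P$, there is an isomorphism $\widehat{\iota_{\P,\ast}(x)}(z)\cong\hat x(\iota_\P^\ast z)$ natural in $z$ and $x$; in other words $\widehat{\iota_{\P,\ast}(-)}\cong\rho\circ\widehat{(-)}$ as functors $\T_\P\to\ModT$. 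Applying this to the morphism $g$ gives $\widehat{\iota_{\P,\ast}(g)}\cong\rho(\hat g)$, so it suffices to prove that $\rho$ reflects the zero morphism; faithfulness of $\rho$ gives this, and in fact also re-derives the ``only if'' direction since $\rho$ sends zero morphisms to zero.

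It remains to check that $\rho$ is faithful, and here I would use the description of $\T_\P^\omega$ from \cref{rec-category-thomason}: it is the idempotent completion of the Verdier quotient $\T^\omega/\T^\omega_{\supp(\P)}$, whose objects are exactly those of $\T^\omega$. Hence every object $w\in\T_\P^\omega$ is a retract of $\iota_\P^\ast z$ for some $z\in\T^\omega$, via maps $\sigma\colon w\to\iota_\P^\ast z$ and $\pi\colon\iota_\P^\ast z\to w$ with $\pi\sigma=\id_w$. If $\eta\colon M\to N$ is a morphism in $\MMod\T_\P^\omega$ with $\rho(\eta)=0$, i.e. $\eta_{\iota_\P^\ast z}=0$ for every $z\in\T^\omega$, then naturality of $\eta$ with respect to $\sigma$ and $\pi$ (using that the contravariant functors $M,N$ turn $\pi\sigma=\id_w$ into split idempotents) writes $\eta_w=N(\sigma)\circ\eta_{\iota_\P^\ast z}\circ M(\pi)=0$; hence $\eta=0$, so $\rho$ is faithful. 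Applying this with $\eta=\hat g$ completes the argument. The only step that needs genuine care is this last one: compact objects of $\T_\P$ are merely retracts of images of compact objects of $\T$, not literal such images, so the vanishing of a $\T_\P^\omega$-module morphism must be tested on the full image of $\T^\omega$ and then propagated along retracts; everything else is formal from the adjunction square already established in the proof of \cref{iota preserves definables}.
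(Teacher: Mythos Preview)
Your proof is correct and follows essentially the same approach as the paper: identify $f\otimes\f_\P$ with $\iota_{\P,\ast}(\iota_\P^\ast f)$ and then invoke \cref{iota preserves definables}. The paper's proof is a two-line appeal to that proposition, whereas you spell out the reflection direction explicitly via faithfulness of the restriction-of-scalars functor and the fact that every compact object of $\T_\P$ is a retract of an object in the image of $\iota_\P^\ast$; this is exactly the content the paper is implicitly using (the commutative square in the proof of \cref{iota preserves definables}(a) plus the description of $\T_\P^\omega$ in \cref{rec-category-thomason}), so your added detail is welcome rather than a deviation.
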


\begin{proof}
    Recall  that $\f_\P\otimes-$ agrees with $\iota_{\P,\ast}\circ \iota^\ast_\P$. Then the result is a direct application of   \cref{iota preserves definables}. 
\end{proof}

\subsection{Examples of g-pure-injective objects}
We conclude this section with a couple of examples of g-pure-injective objects.
 We need some terminology from \cite{balmer2020frame} and  \cite{Bal20}. 

\begin{recollection}\label{rec:homological-res}%
 Let $\T$ be a rigidly-compactly generated tt-category.  A \textit{homological prime} for $\T^\omega$ is a maximal proper Serre $\otimes$-ideal $\mathcal{B}\subseteq \mathcal{A}^{\mathrm{fp}}$, where $\mathcal{A}\coloneqq\ModT$. The \textit{homological spectrum of $\T^\omega$} is the set 
 \[
 \mathrm{Spc}^h(\T^\omega)=\{\P\subseteq \A \mid \P \textrm{ homological prime of } \T^\omega \}
 \]
 and it is equipped with the topology generated by the basis of closed sets given by 
 \[
 \mathrm{supp}^h(x)=\{\P\in \mathrm{Spc}^h(\T^\omega)\mid \hat{x}\not \in \P \}
 \]
 for $x$ in $\T^\omega$. This construction is functorial with respect to tt-functors. That is, let $f\colon \T^\omega \to \mathcal{S}^\omega$ be a tt-functor, then we obtain a continuous map 
 \[
 \varphi(f)\colon \Spc^h(\mathcal{S}^\omega) \to \Spc^h(\T^\omega).
 \]
 Given a homological prime $\mathcal{B}$ for $\T^\omega$, we can inflate it to a localizing subcategory $\overrightarrow{\mathcal{B}}$ and consider the quotient $\A/\overrightarrow{\mathcal{B}}$. This quotient is symmetric monoidal, and taking the injective envelope of the monoidal unit in this quotient will determine a pure-injective object $E_\mathcal{B}$ in $\T$, such that $\overrightarrow{\mathcal{B}}=\mathrm{Ker}(\hat{E}_\mathcal{B}\otimes -)$.
  In general, there is a continuous map 
\[
\phi\colon \mathrm{Spc}^h(\T^\omega)\to \mathrm{Spc}(\T^\omega), \quad \P\mapsto \h^{-1}(\P).
\]
This map is always surjective.
\end{recollection}

\begin{proposition}
    Let $\T$ be a big tt-category, and $\mathcal{B}$ be homological prime of $\T^\omega$. Then the pure-injective object $E_{\mathcal{B}}$ is g-pure-injective. 
\end{proposition}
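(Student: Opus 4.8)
The plan is to show that for every prime $\P \in \Spc(\T^\omega)$, the localization $\iota_\P^\ast(E_\mathcal{B})$ is pure-injective in $\T_\P$; by \cref{relation between purity and phantoms} and \cref{ginj kill g-phantoms} this is exactly what we need. Equivalently, using the alternative description of g-purity, I would prove that $\f_\P \otimes E_\mathcal{B}$ is pure-injective in $\T$. The key observation is that $E_\mathcal{B}$ is not just any pure-injective: by \cref{rec:homological-res} it satisfies $\overrightarrow{\mathcal{B}} = \Ker(\hat{E}_\mathcal{B}\otimes -)$, and $\hat{E}_\mathcal{B}$ is the injective envelope of the monoidal unit in $\A/\overrightarrow{\mathcal{B}}$, which is a \emph{module-theoretic} tensor-idempotent-like object. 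So the strategy is to exploit this special structure rather than to argue about an arbitrary pure-injective.

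First I would recall from \cref{prop-iota*-gpure-pure} that it suffices to exhibit $E_\mathcal{B}$ (or rather $\f_\P\otimes E_\mathcal{B}$) as the pushforward $\iota_{\P,\ast}(y)$ of a pure-injective object $y\in\T_\P$ whenever $\P$ is the image $\phi(\mathcal{C})$ of some homological prime $\mathcal{C}$ of $\T_\P^\omega$; then g-pure-injectivity is immediate. So the real content is:
\begin{enumerate}
\item For $\P = \phi(\mathcal{B})$ the honest geometric point carrying $\mathcal{B}$, show that the homological prime $\mathcal{B}$ of $\T^\omega$ \emph{descends} along the tt-functor $\iota_\P^\ast\colon \T^\omega\to\T_\P^\omega$, i.e. there is a homological prime $\mathcal{B}'$ of $\T_\P^\omega$ with $\varphi(\iota_\P^\ast)(\mathcal{B}') = \mathcal{B}$ and, crucially, $\iota_{\P,\ast} E_{\mathcal{B}'} \simeq E_\mathcal{B}$ (or at least $\iota_\P^\ast E_\mathcal{B} \simeq E_{\mathcal{B}'}$ up to the identification $\T_\P \simeq \f_\P\otimes\T$). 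This uses functoriality of $\Spc^h$ and the fact that $\iota_\P^\ast$ is a finite localization, so that $\Mod\text{-}\T_\P^\omega$ is a localization of $\ModT$ compatible with the monoidal structure and with the formation of injective envelopes of the unit.
\item For every other prime $\Q\neq\P$, show directly that $\f_\Q \otimes E_\mathcal{B} = 0$, equivalently $\iota_\Q^\ast(E_\mathcal{B}) = 0$. Here I would use that $\overrightarrow{\mathcal{B}} = \Ker(\hat E_\mathcal{B}\otimes-)$ together with $\phi(\mathcal{B}) = \P$: the support of $\hat E_\mathcal{B}$ in $\Spc(\T^\omega)$ should be contained in $\mathrm{gen}(\P)$, which forces $\e_\Q\otimes E_\mathcal{B}\simeq E_\mathcal{B}$, hence $\f_\Q\otimes E_\mathcal{B}\simeq 0$, for $\Q$ not a generalization of $\P$; and for $\Q$ a proper generalization one argues via \cref{rec-tensoring-idempotents} that the relevant idempotent kills $E_\mathcal{B}$ as well, or one simply notes that $\iota_\Q^\ast E_\mathcal{B}$ is $0$ or again of the form $E_{\mathcal{B}''}$ by step (1) applied relatively. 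The upshot is that at every prime $\iota_\P^\ast(E_\mathcal{B})$ is either $0$ or a genuine homological injective of the stalk, and in both cases pure-injective.
\end{enumerate}

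Assembling these: at the distinguished prime $\P = \phi(\mathcal{B})$ we have $\iota_\P^\ast E_\mathcal{B} \simeq E_{\mathcal{B}'}$, which is pure-injective in $\T_\P$ by construction (it is an injective module over $\T_\P^\omega$, hence pure-injective by the characterization lemma); at all other primes $\iota_\Q^\ast E_\mathcal{B}$ is $0$, trivially pure-injective. Thus every geometrically pure monomorphism out of $E_\mathcal{B}$ splits after applying each $\iota_\Q^\ast$, and by \cref{ginj kill g-phantoms} (every g-phantom into $E_\mathcal{B}$ is trivial, since at each stalk the localized map is a phantom into a pure-injective, hence zero) we conclude $E_\mathcal{B}$ is g-pure-injective.

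The main obstacle I anticipate is step (1): making precise that a homological prime of $\T^\omega$ restricts to a homological prime of the stalk's compact part with the injective envelope of the unit behaving well under the Gabriel localization $\ModT \to \Mod\text{-}\T_\P^\omega$. This requires knowing that this localization is a monoidal functor sending $\overrightarrow{\mathcal{B}}$ to $\overrightarrow{\mathcal{B}'}$ for a homological prime $\mathcal{B}'$ and commuting with injective envelopes — plausible because finite localizations of $\ModT$ are themselves categories of modules over the compact part of the localized tt-category, but the compatibility with the tensor and with maximality of the Serre ideal needs care. A cleaner alternative, if that bookkeeping is unpleasant, is to bypass step (1) entirely: observe that $\hat E_\mathcal{B}$ is a pure-injective with $\Ker(\hat E_\mathcal{B}\otimes-)$ a prime localizing ideal, apply \cref{iota preserves definables} to see $\iota_\P^\ast$ preserves and reflects the relevant definable subcategory, and deduce directly that $\iota_\P^\ast E_\mathcal{B}$ is pure-injective because $\iota_{\P,\ast}$ preserves pure-injectives and $\iota_\P^\ast E_\mathcal{B}$ sits in the essential image appropriately — i.e. reduce to \cref{prop-iota*-gpure-pure} via part (b) of \cref{iota preserves definables}.
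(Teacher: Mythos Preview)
Your step (1) is precisely the paper's argument and is already a complete proof on its own: take $\P=\phi(\mathcal{B})$, choose $\mathcal{B}'\in\varphi^{-1}(\mathcal{B})$ via functoriality of $\Spc^h$, and invoke \cite[Lemma~5.6]{Bal20b} to obtain that $E_{\mathcal{B}}$ is a direct summand of $\iota_{\P,\ast}(E_{\mathcal{B}'})$. Then \cref{prop-iota*-gpure-pure} says $\iota_{\P,\ast}(E_{\mathcal{B}'})$ is g-pure-injective, and g-pure-injectives are closed under summands. You even note that ``g-pure-injectivity is immediate'' once step (1) is done; you should stop there.

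The surrounding framework, however, contains two genuine errors. First, the opening claim --- that it suffices to show $\iota_\P^\ast(E_{\mathcal{B}})$ is pure-injective in $\T_\P$ for every $\P$ --- is not justified by \cref{relation between purity and phantoms} and \cref{ginj kill g-phantoms}. Those results tell you that a g-phantom $f\colon y\to E_{\mathcal{B}}$ satisfies $\iota_\P^\ast(f)=0$ for every $\P$, but the family $\{\iota_\P^\ast\}$ is \emph{not} known to be jointly faithful, so you cannot conclude $f=0$. The paper never claims such an implication; \cref{trivial g-inj are detected by localizitions} detects zero \emph{objects} only after already knowing g-pure-injectivity. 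Second, step (2) is false: $\iota_\Q^\ast(E_{\mathcal{B}})$ does not vanish at specializations $\Q\subsetneq\P$. Concretely, in $\mathbf{D}(\mathbb{Z})$ with $\mathcal{B}$ the homological prime over the generic point, $E_{\mathcal{B}}\simeq\mathbb{Q}$ and $\iota_{(p)}^\ast(\mathbb{Q})\simeq\mathbb{Q}\neq 0$ for every closed point $(p)$. More generally, since $\iota_\P^\ast$ factors through $\iota_\Q^\ast$ whenever $\Q\subseteq\P$, nonvanishing at $\P$ forces nonvanishing at every specialization of $\P$. So delete step (2) and the ``assembling'' paragraph; what remains is the paper's proof.
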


\begin{proof}
    First, consider a homological prime $\mathcal{B}$ lying above a triangular prime $\P$. The localization functor $\iota_\P^\ast\colon \T\to \T_\P$ induces a map 
    \[
  \varphi\colon   \Spc^h(\T_\P^\omega)\to \Spc^h(\T^\omega).  
    \]
    Let $\mathcal{B'}\in \varphi^{-1}\mathcal{B}$. Then by \cite[Lemma 5.6]{Bal20b} we obtain that $E_\mathcal{B}$ is a direct summand of $\iota_{\P,\ast}(E_{\mathcal{B}'})$. Hence the result follows by  \cref{prop-iota*-gpure-pure} and the fact that $\mathrm{Im}(\iota_{\P,\ast})=\loc(\P)^\perp$ is a thick subcategory of $\T$. 
\end{proof}

In \cref{g-pure vs g-pure á la PS}, we will present an example where purity and geometric purity do not coincide. Before that, let us conclude this section with an instance in which they do coincide.

\begin{proposition}\label{pure=gpure affine}
Let $R$ be a commutative ring. Then the pure-injective objects and the geometrically pure-injective objects in the derived category $\mathbf{D}(R)$ coincide.
\end{proposition}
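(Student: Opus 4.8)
The statement is that over a commutative ring $R$, an object of $\mathbf D(R)$ is pure-injective if and only if it is g-pure-injective. One direction is free: by \cref{cor-gpinj is pinj}, every g-pure-injective object is pure-injective. So the content is the converse: every pure-injective object $x$ in $\mathbf D(R)$ is g-pure-injective. Equivalently, by \cref{ginj kill g-phantoms}, it suffices to show that every g-phantom map $f\colon y \to x$ with $x$ pure-injective is already trivial; and since $x$ is pure-injective, by \cref{relation between purity and phantoms} it is enough to show that a g-phantom map in $\mathbf D(R)$ is automatically a phantom map. In other words, the plan is to prove that in $\mathbf D(R)$ the notions of phantom and g-phantom coincide.

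The key is to identify the stalks and the associated idempotents concretely. The Balmer spectrum $\Spc(\mathbf D(R)^\omega) = \Spc(\mathbf{D}^{\mathrm{perf}}(R))$ is homeomorphic to $\Spec R$ (Thomason, Hopkins--Neeman, Balmer), and under this identification, for a prime $\mathfrak p \in \Spec R$ the point $\P$ of the Balmer spectrum has $\mathrm{gen}(\P)$ corresponding to $\{\mathfrak q : \mathfrak q \subseteq \mathfrak p\} = \Spec R_{\mathfrak p}$, so that the finite localization $\iota_\P^\ast\colon \mathbf D(R) \to \mathbf D(R)_\P$ is identified with the (smashing) localization $\mathbf D(R) \to \mathbf D(R_{\mathfrak p})$ given by $- \otimes^{L}_R R_{\mathfrak p}$; in particular $\f_\P \simeq R_{\mathfrak p}$ viewed in $\mathbf D(R)$. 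Thus, using the alternative \cref{def-gpurity-alter}, $f$ is g-phantom iff $f \otimes^L_R R_{\mathfrak p}$ is a phantom map in $\mathbf D(R)$ for every $\mathfrak p \in \Spec R$ (note: being phantom in $\mathbf D(R_{\mathfrak p})$ and in $\mathbf D(R)$ agree via \cref{iota preserves definables}). Now I would use the standard criterion that a map $f$ in $\mathbf D(R)$ is phantom iff $\widehat f = 0$, i.e.\ $\Hom_{\mathbf D(R)}(P, f) = 0$ for every compact (= perfect) $P$; and it suffices to test on the shifts $\Sigma^n R$ of the unit since perfect complexes are built from these, so $f$ is phantom iff $H_n(f) = 0$ for all $n$ — wait, that is too naive because $\Hom(\Sigma^n R, g) = H_{-n}(g)$ detects only the cohomology-level map, which is not the same as $\widehat f = 0$. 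Let me restate: $f$ is phantom iff $\Hom(P, f) = 0$ for all perfect $P$, equivalently iff $f$ induces the zero map $\Hom(P, y) \to \Hom(P, x)$ for all perfect $P$. The point is that this is a condition that can be checked locally on $\Spec R$.

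The heart of the argument, then, is a \emph{local-to-global} statement for phantom maps in $\mathbf D(R)$: if $f \otimes^L_R R_{\mathfrak p}$ is phantom for every $\mathfrak p$, then $f$ is phantom. I would prove this by showing that for a perfect complex $P$, the abelian group $\Hom_{\mathbf D(R)}(P, f)$ — more precisely, the image of $\Hom(P, y) \to \Hom(P, x)$, or better, working with the cone $z$ of $f$ and the connecting map, the relevant group is a finitely generated $R$-module in each degree when... no, $x$ and $y$ need not have finitely generated homology. The cleaner route: a map $f$ is phantom iff it factors through a phantom map, iff the induced map on the cone fits appropriately; but most robustly, $f\colon y \to x$ is phantom iff $\widehat f \colon \widehat y \to \widehat x$ is zero in $\Mod\text{-}\mathbf D^{\mathrm{perf}}(R)$, and since $\mathbf D^{\mathrm{perf}}(R)$ is generated by $R$ as a thick subcategory, $\widehat y$ is determined by the graded $R$-module $\bigoplus_n H_n(y)$ together with its module structure over $\mathbf D^{\mathrm{perf}}(R)$ — here is where I would invoke that the restricted Yoneda functor $\mathbf D(R) \to \Mod\text{-}\mathbf D^{\mathrm{perf}}(R)$, composed with evaluation, detects phantomness via perfect complexes, and that a homomorphism of (pro-coherent / flat) modules over $\mathbf D^{\mathrm{perf}}(R)$ vanishes iff it vanishes after $- \otimes_R R_{\mathfrak p}$ for all primes $\mathfrak p$, because localization is faithfully exact on the collection of all localizations and $\Hom(P, -)$ commutes with the localization $- \otimes^L_R R_{\mathfrak p}$ when $P$ is perfect: $\Hom_{\mathbf D(R)}(P, x)\otimes_R R_{\mathfrak p} \simeq \Hom_{\mathbf D(R_{\mathfrak p})}(P_{\mathfrak p}, x_{\mathfrak p})$. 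Granting this last isomorphism (which follows from perfectness of $P$ and flatness of $R_{\mathfrak p}$ over $R$), the conclusion is immediate: if $\Hom(P_{\mathfrak p}, f_{\mathfrak p}) = 0$ for all $\mathfrak p$ and all perfect $P$, then $\Hom(P, f)\otimes_R R_{\mathfrak p} = 0$ for all $\mathfrak p$, hence $\Hom(P, f) = 0$ (a module vanishing locally at every prime vanishes), so $f$ is phantom.

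\textbf{Main obstacle.} The subtle point — the one I would be most careful about — is the commutation $\Hom_{\mathbf D(R)}(P, x) \otimes_R R_{\mathfrak p} \simeq \Hom_{\mathbf D(R_{\mathfrak p})}(P_{\mathfrak p}, x_{\mathfrak p})$ for $P$ perfect and $x$ arbitrary, and the matching identification of the Balmer-theoretic stalk $\mathbf D(R)_\P$ with $\mathbf D(R_{\mathfrak p})$ together with $\f_\P \simeq R_{\mathfrak p}$. The first is standard (reduce to $P = R$, where both sides are $H_*(x)\otimes R_{\mathfrak p} = H_*(x_{\mathfrak p})$, and dévissage over the thick subcategory of perfect complexes), and the second follows from the Thomason--Neeman classification identifying $\supp(\P)$ with $\Spec R \setminus \Spec R_{\mathfrak p}$, making $\iota_\P^\ast$ the localization inverting everything outside $\mathfrak p$, which is exactly $- \otimes^L_R R_{\mathfrak p}$ since $R_{\mathfrak p}$ is the corresponding right idempotent (a flat, hence smashing, localization). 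I would spend the bulk of the written proof nailing down these two identifications cleanly; once they are in place the local-to-global vanishing of a module is elementary.
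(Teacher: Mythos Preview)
Your proposal is correct and takes essentially the same approach as the paper. The paper argues with g-pure monomorphisms rather than g-phantoms (showing that a g-pure mono $f$ is pure by checking injectivity of $\Hom(z,f)$ for each compact $z$ after localizing at every prime $\mathfrak p$), but the crux in both cases is exactly the isomorphism $\Hom_{\mathbf D(R)}(P,x)\otimes_R R_{\mathfrak p}\cong \Hom_{\mathbf D(R_{\mathfrak p})}(P_{\mathfrak p},x_{\mathfrak p})$ for $P$ perfect, followed by the local--global principle for $R$-module maps; your phantom formulation and the paper's pure-mono formulation are interchangeable via \cref{relation gphan gmono gepi}.
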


\begin{proof}
 Let $f\colon x\to y$ be a g-pure monomorphism in $\mathbf{D}(R)$. We claim that $f$ is pure monomorphism. Note that this claim gives us that the two notions of purity in $\mathbf{D}(R)$ coincide.  In order to prove the claim, we need to verify that 
 \[
 \mathrm{Hom}_{\mathbf{D}(R)}(z,x)\xrightarrow[]{\hat{f}_z} \mathrm{Hom}_{\mathbf{D}(R)}(z,y)
 \]
 is injective for all $z\in \mathbf{D}(R)^\omega$. Recall that $\hat{f}_z$ denotes $\mathrm{Hom}(z,f)$. Let $\pp$ be a prime of $R$. Then the  localization at $\pp$ gives us a map $$\mathrm{Hom}_{\mathbf{D}(R)}(z,x)\otimes_R R_\pp\xrightarrow[]{\hat{f}_z\otimes_R R_\pp} \mathrm{Hom}_{\mathbf{D}(R)}(z,y)\otimes_R R_\pp$$ but since $z$ is compact in $\mathbf{D}(R)$, we  have isomorphisms: 
 \begin{enumerate}
     \item $\mathrm{Hom}_{\mathbf{D}(R)}(z,x)\otimes_R R_\pp\cong\mathrm{Hom}_{\mathbf{D}(R_\pp)}(z_\pp,x_\pp)$, and
     \item $\mathrm{Hom}_{\mathbf{D}(R)}(z,y)\otimes_R R_\pp\cong\mathrm{Hom}_{\mathbf{D}(R_\pp)}(z_\pp,y_\pp)$.
 \end{enumerate}
 Now, we can use that  $f$ is g-pure monomorphism to obtain that $f\otimes_R R_\pp$ is pure monomorphism in $\mathbf{D}(R_\pp)$. It follows that $\hat{f}_z\otimes_R R_\pp$ is injective, for each prime $\pp$ of $R$. Therefore $\hat{f}_z$ is injective as well. The object $z \in  \mathbf{D}(R)^\omega$ was arbitrary and so $\hat{f}$ is a monomorphism.
\end{proof}

\begin{remark}
    Note that the above example shows that, even in non-local rigidly-compactly generated tt-categories, the notions of purity and g-purity may coincide.
\end{remark}

\section{Purity at closed points and open covers}

One might wonder whether it is possible to test geometric purity on a smaller class of localizations. In this section, we address this question for the class of smashing ideals arising from closed points of the tt-spectrum. We also relate the theory of geometric purity in $\T$ with that in the categories $\T(U)$, where $U$ ranges over the quasi-compact open subsets of the Balmer spectrum of $\T$.

But first, let us begin with the following auxiliary result that tells us that the localizations $\iota_\P^\ast$ detect whether a g-pure-injective object is trivial. In principle, a non-trivial object in $\T$ might be trivial under $\iota_\P^\ast$, for all $\P \in \Spc(\T^\omega)$.

\begin{lemma}\label{g-pure embedding for any object}
Let $x$ be an object  in $\T$. Then there is a g-pure-injective object  $y$ and a g-pure monomorphism $x\to y$. In fact, one can take $y$ as a product of indecomposable g-pure-injective objects. 
\end{lemma}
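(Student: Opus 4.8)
The plan is to mimic the classical construction of a pure-injective envelope via the restricted Yoneda functor, but carried out at every tt-stalk simultaneously. First I would recall that for a compactly generated triangulated category, every object embeds pure-monomorphically into a pure-injective; concretely, for $x \in \T$ one forms the injective envelope $E$ of $\hat x$ in $\MMod \T^\omega$, and by the characterization of pure-injectives (item (b) of the lemma following the definition of pure-injectivity) there is a pure-injective object $p(x) \in \T$ with $\widehat{p(x)} \cong E$, together with a pure monomorphism $x \to p(x)$. This is the standard pure-injective envelope. The subtlety is that a pure monomorphism need not be g-pure, so this construction alone is not enough.

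The key step is therefore to iterate/assemble stalkwise. For each $\P \in \Spc(\T^\omega)$ consider $\iota_\P^\ast(x) \in \T_\P$ and its pure-injective envelope $\eta_\P \colon \iota_\P^\ast(x) \to q_\P$ in $\T_\P$; by \cref{prop-iota*-gpure-pure} the object $\iota_{\P,\ast}(q_\P)$ is g-pure-injective in $\T$. Taking adjoints of the $\eta_\P$ yields a map $x \to \prod_{\P} \iota_{\P,\ast}(q_\P)$. I would then check that this map is a g-pure monomorphism: applying $\iota_\Q^\ast$ for a fixed $\Q$, one uses that $\iota_\Q^\ast$ is a left adjoint hence preserves products only up to a comparison map, so instead I would argue directly that $\iota_\Q^\ast$ applied to the component indexed by $\Q$ already recovers (a split summand containing) the pure monomorphism $\eta_\Q$ — since $\iota_\Q^\ast \iota_{\Q,\ast} \simeq \id$ on $\T_\Q$ (the functor $\iota_{\Q,\ast}$ is fully faithful) and the composite $\iota_\Q^\ast(x) \to \iota_\Q^\ast\iota_{\Q,\ast}(q_\Q) \simeq q_\Q$ is exactly $\eta_\Q$. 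Because a pure monomorphism is detected by $\hat{(-)}$ being injective and the product projection $\widehat{\prod \iota_{\P,\ast}(q_\P)}(z) \to \widehat{\iota_{\Q,\ast}(q_\Q)}(z)$ is split surjective on each compact $z$, injectivity of $\hat x(z) \to \widehat{\iota_\Q^\ast \iota_{\Q,\ast}(q_\Q)}(z) = \hat{\eta}_\Q(z)$ forces injectivity of the map into the stalk, which is what g-pure monomorphy requires at $\Q$. Finally, a product of g-pure-injective objects is g-pure-injective (a g-phantom into a product is g-phantom into each factor, hence zero by \cref{ginj kill g-phantoms}, applied after $\iota_\Q^\ast$ which preserves g-phantoms by \cref{prop-phantom-gphantom}; note $\iota_\Q^\ast$ preserves products of this particular form), so $y \coloneqq \prod_\P \iota_{\P,\ast}(q_\P)$ does the job, and decomposing each $q_\P$ (equivalently each injective envelope in $\MMod \T^\omega_\P$) into indecomposable injectives — which is legitimate since injective envelopes of objects decompose — exhibits $y$ as a product of indecomposable g-pure-injectives.

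I expect the main obstacle to be the set-theoretic and size bookkeeping: a priori $\Spc(\T^\omega)$ is a proper class of a certain size and for each $\P$ the envelope $q_\P$ may itself be large, so one must be careful that the product $\prod_\P \iota_{\P,\ast}(q_\P)$ exists in $\T$; this should follow because $\T$ has all small products (being compactly generated, it has all products by Brown representability) and one can cut the index down to a set — e.g. replace the class of all primes by a representative set, or first pass through a single pure-injective envelope $p(x)$ of $x$ in $\T$ and then realize its (set-indexed) indecomposable decomposition after checking each summand survives as g-pure-injective. A secondary technical point is verifying that $\iota_\Q^\ast$ behaves well enough with respect to the relevant product — this is why I would phrase the g-pure-monomorphism check via the split surjections $\widehat{(-)}$ on compacts rather than claiming $\iota_\Q^\ast$ commutes with the product on the nose. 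Once these size issues are handled, the remaining verifications are routine applications of \cref{prop-iota*-gpure-pure}, \cref{prop-phantom-gphantom}, \cref{ginj kill g-phantoms}, and the classical pure-injective-envelope existence.
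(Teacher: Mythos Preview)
Your approach is essentially identical to the paper's: for each prime $\P$ take a pure monomorphism $\iota_\P^\ast(x)\to E(\P)$ into a (product of indecomposable) pure-injective(s) in $\T_\P$, form the product of the pushforwards, and verify g-purity at a fixed $\Q$ by projecting onto the $\Q$-component and using $\iota_\Q^\ast\iota_{\Q,\ast}\simeq\id$. One small correction: your worry that $\Spc(\T^\omega)$ might be a proper class is unfounded, since $\T^\omega$ is essentially small by hypothesis, so the product is over a set and no size gymnastics are needed.
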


\begin{proof}
    For each prime $\P$ in $\T^\omega$, consider a pure monomorphism $\iota_\P^\ast(x)\xrightarrow[]{\phi_\P} E(\P)$ with $E(\P)$ a product of indecomposable pure-injective objects in the local category $\T_\P$. We claim that the natural map 
   \begin{equation}\label{eq-map1}
        x\to \prod_{\P \in \mathrm{Spc}(\T^\omega)} \iota_{\P,\ast}(E(\P))
   \end{equation}
    satisfies the desired properties. Recall that $\iota_{\P,\ast}$ preserves products and indecomposable objects. Therefore, \cref{prop-iota*-gpure-pure} gives us that each $\iota_{\P,\ast}(E(\P))$ is a product of indecomposable g-pure-injective objects. Hence, we only need to verify that the above map is a g-pure monomorphism. Now, for a prime $\mathcal{Q}$ in $\T^\omega$, consider the following commutative diagram 
    \[
\begin{tikzcd}
\iota_{\mathcal{Q}}^\ast(x) \arrow{r}{} \arrow[swap]{rdd}{\phi_\mathcal{Q}} \arrow[swap]{dr}{} & \iota_{\mathcal{Q}}^\ast(\iota_{\mathcal{Q},\ast}E(\mathcal{Q}))\oplus \iota_{\mathcal{Q}}^\ast(z) \arrow{d}{} \\
 & \iota_{\mathcal{Q}}^\ast(\iota_{\mathcal{Q},\ast}E(\mathcal{Q})) \arrow{d}{\simeq}\\
 & E(\mathcal{Q})
\end{tikzcd}
\]
where the horizontal map is obtained by applying $\iota^\ast_\Q$ to the map in \cref{eq-map1}, the top vertical map is the projection map and 
\[
z= \prod_{\P \in \mathrm{Spc}(\T^\omega), \P\not=\mathcal{Q}} \iota_{\P,\ast}(E(\P))).
\]
Since $\phi_\mathcal{Q}$ is a pure monomorphism, we deduce that the horizontal  morphism is a pure monomorphism as well. This completes the result. 
\end{proof}

\begin{corollary}\label{trivial g-inj are detected by localizitions}
    Let $x$ be a g-pure-injective object in $\T$. Then $x\simeq 0$ if and only if $\iota_\P^\ast(x)\simeq 0$ for all $\P\in\Spc(\T^\omega)$. 
\end{corollary}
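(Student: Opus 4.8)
The forward direction is trivial: if $x \simeq 0$ then each $\iota_\P^\ast(x) \simeq 0$ since $\iota_\P^\ast$ is an exact functor. So the content is the converse, and the plan is to deduce it from \cref{g-pure embedding for any object} together with the defining property of g-pure-injectivity. First I would apply \cref{g-pure embedding for any object} to $x$ itself, producing a g-pure monomorphism $j\colon x \to y$ with $y$ g-pure-injective. Since $x$ is assumed g-pure-injective, the exact triangle on $j$ (whose first map is $j$, viewed via \cref{relation gphan gmono gepi} as the pure-monomorphism leg) splits; equivalently $j$ is a split monomorphism, so $x$ is a direct summand of $y$.

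\textbf{Key steps.} The heart of the argument is then to show $y \simeq 0$, which by the split injectivity of $j$ forces $x \simeq 0$. For this I would use the hypothesis: by construction in \cref{g-pure embedding for any object}, $y = \prod_{\P} \iota_{\P,\ast}(E(\P))$ where $E(\P)$ is a product of indecomposable pure-injectives in $\T_\P$ equipped with a pure monomorphism $\iota_\P^\ast(x) \to E(\P)$. If $\iota_\P^\ast(x) \simeq 0$ for every $\P$, then for each $\P$ we have a pure monomorphism $0 \to E(\P)$, but this does not immediately give $E(\P) \simeq 0$ — a pure monomorphism out of $0$ need not be an isomorphism. So instead I would argue as follows: take, for each $\P$, the pure-injective envelope of $\iota_\P^\ast(x)$, which is $0$ precisely when $\iota_\P^\ast(x) \simeq 0$; re-run the construction of \cref{g-pure embedding for any object} using this choice, so that $E(\P) \simeq 0$ for all $\P$, hence $y = \prod_\P \iota_{\P,\ast}(0) \simeq 0$. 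Then the g-pure monomorphism $x \to y \simeq 0$ is in particular split (again using that $x$ is g-pure-injective), so $x \simeq 0$.

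\textbf{Alternative, cleaner route.} Rather than re-examining the construction, a slicker argument: apply \cref{g-pure embedding for any object} to obtain $x \to y$ with $y$ a product of indecomposable g-pure-injectives, split since $x$ is g-pure-injective; so $y \simeq x \oplus x'$. Applying each exact functor $\iota_\P^\ast$ and using $\iota_\P^\ast(x) \simeq 0$, we get $\iota_\P^\ast(y) \simeq \iota_\P^\ast(x')$. On the other hand, each indecomposable g-pure-injective summand of $y$ is, by \cref{thm-gp-stalks}, of the form $\iota_{\Q,\ast}(w)$ for some prime $\Q$ and pure-injective $w \in \T_\Q$; applying $\iota_\Q^\ast$ recovers $w$ up to the relevant idempotent, so if every $\iota_\P^\ast(y) $ were built from $\iota_\P^\ast(x) \simeq 0$ plus $\iota_\P^\ast(x')$, one tracks that $y$ itself must vanish. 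Since this requires quoting \cref{thm-gp-stalks}, which appears later, I would prefer the first route.

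\textbf{Main obstacle.} The subtle point — and the one I would be most careful about — is that \emph{a pure (or g-pure) monomorphism with zero source need not be zero}; purity controls only the behavior on the finitely presented functor category, not objects directly. The resolution is to choose the target in \cref{g-pure embedding for any object} minimally (via pure-injective envelopes), so that the vanishing of all $\iota_\P^\ast(x)$ genuinely forces the target to vanish; then g-pure-injectivity of $x$ does the rest. I would make sure the statement of \cref{g-pure embedding for any object} is applied with this minimal choice, or equivalently observe that for the particular $y$ in that lemma, $\iota_\P^\ast(x) \simeq 0$ implies we may take $E(\P) \simeq 0$.
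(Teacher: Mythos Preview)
Your proposal is correct and matches the paper's proof: both argue that when $\iota_\P^\ast(x)\simeq 0$ for every $\P$ one may take $E(\P)=0$ in the construction of \cref{g-pure embedding for any object}, producing a g-pure monomorphism $x\to 0$ which then splits by g-pure-injectivity of $x$. Your ``first route'' is exactly the paper's argument, and your correctly flagged obstacle (that a pure monomorphism out of $0$ need not be an isomorphism) is precisely why one must choose $E(\P)=0$ rather than an arbitrary target.
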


\begin{proof}
    If $\iota_\P^\ast(x)\simeq 0$, then we can take $E(\P)=0$ in the proof of  \cref{g-pure embedding for any object}. In particular, if $\iota_\P^\ast(x)\simeq 0$ for all $\P\in\Spc(\T^\omega)$, then the zero morphism $x\to 0$ is a g-pure monomorphism. But this morphism has to split since  $x$ is g-pure-injective. Thus $x$ must be trivial. 
\end{proof}

\subsection{Closed points} For a Thomason subset $Y\subseteq\Spc(\T^\omega)$, recall the associated smashing localization $\iota_{Y}^\ast \colon\T\to \T(Y^c)$. See  \cref{rec-category-thomason} if needed.

\begin{remark}\label{rem:restriction functors}
    Let $U$ and $V$ the complement of Thomason subsets of $\Spc(\T^\omega)$. If $U\subseteq V$ then we have a commutative diagram 
    \begin{center}
        \begin{tikzcd}
            &  \T \arrow[rd,"\iota_{U^c}^\ast"] \arrow[ld,"\iota_{V^c}^\ast"'] & \\
            \T(V) \arrow[rr,"\mathrm{res}^{V,\ast}_U"] &  & \T(U)
        \end{tikzcd}
    \end{center}
    where $\mathrm{res}^{V,\ast}_U$ is the functor $-\otimes \f_{U^c}$. See \cite{BF11} or \cref{subsection-smashingloc}. 
\end{remark}

\begin{remark}
    For a quasi-compact open subset $U$ of $\mathrm{Spc}(\T^\omega)$, note that $\T(U)$ is a particular case of  \cref{rec-category-thomason} applied to the Thomason subset $U^c$. 
\end{remark}

\begin{lemma}\label{lem: loc at U then at p}
     Let $U$ be a quasi-compact open subset of $\mathrm{Spc}(\T^\omega)$ and $\P$ be a prime in $\T^\omega$ lying in $U$. Then the localization functor $\iota_\P^\ast\colon \T\to \T_\P$ factors through $\T(U)$. That is, we get a commutative diagram  
     \begin{center}
         \begin{tikzcd}
        \T  \arrow[r,"{\iota_U^\ast}"] \ar[dr,"\iota_\P^\ast"'] &  \T(U) \ar[d] \\
         & \T_\P
     \end{tikzcd} 
     \end{center}
     where the vertical map is tensoring with $\f_{\mathrm{gen}(\P)^c}$.
\end{lemma}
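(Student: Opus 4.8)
The statement to prove is Lemma~\ref{lem: loc at U then at p}: for a quasi-compact open $U \subseteq \Spc(\T^\omega)$ and a prime $\P \in U$, the localization $\iota_\P^\ast \colon \T \to \T_\P$ factors through $\iota_U^\ast \colon \T \to \T(U)$ via tensoring with $\f_{\mathrm{gen}(\P)^c}$. The plan is to work entirely with idempotents and \cref{rec-tensoring-idempotents}, reducing the factorization to an identity between right idempotents. Recall that $\T(U) = \T/\T_{U^c}$ is obtained by tensoring with $\f_{U^c}$, while $\T_\P = \T/\loc(\P) = \T(\mathrm{gen}(\P))$ is obtained by tensoring with $\f_{\supp(\P)}$ (by the definition of tt-stalk and \cref{rec-category-thomason}), so everything in sight is a smashing localization given by tensoring with a right idempotent.

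**Key steps.**
First I would observe that $\P \in U$ means $\P \notin U^c$, and since $U^c$ is Thomason (hence generalization-stable is not automatic, but it is specialization-closed), we get $\supp(\P) = \mathrm{gen}(\P)^c \supseteq U^c$; equivalently $U^c \subseteq \supp(\P)$. Indeed $\Q \in U^c$ and $\P \subseteq \Q$ would force $\P \in \overline{\{\Q\}} \subseteq U^c$ since $U^c$ is closed under specialization, contradicting $\P \in U$; so no generalization of any point of $U^c$ lies outside $U^c$ — rather, what we need is simply that $U^c \subseteq \supp(\P)$, which holds because if $\Q \in U^c$ then $\P \not\subseteq \Q$ is false in general; the correct elementary argument is: $\P \notin U^c$ and $U^c$ specialization-closed imply $\mathrm{gen}(\P) \cap U^c = \emptyset$, i.e. $U^c \subseteq \mathrm{gen}(\P)^c = \supp(\P)$. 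Second, having established $U^c \subseteq \supp(\P)$ as Thomason subsets, I would apply \cref{rec-tensoring-idempotents}(c) to compute $\f_{U^c} \otimes \f_{\supp(\P)} \simeq \f_{U^c \cup \supp(\P)} \simeq \f_{\supp(\P)}$. Third, the vertical functor $\T(U) \to \T_\P$ is $- \otimes \f_{\mathrm{gen}(\P)^c} = - \otimes \f_{\supp(\P)}$, interpreting objects of $\T(U)$ inside $\T$ as $\f_{U^c} \otimes \T$; so composing $\iota_U^\ast = \f_{U^c}\otimes -$ with this vertical functor gives $x \mapsto \f_{\supp(\P)} \otimes \f_{U^c} \otimes x \simeq \f_{\supp(\P)} \otimes x = \iota_\P^\ast(x)$, naturally in $x$. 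Finally I would note that this composite is indeed a well-defined geometric functor $\T(U) \to \T_\P$ — it is the finite localization killing the image of $\T_{\supp(\P)}$ in $\T(U)$, which makes sense since $\supp(\P) \supseteq U^c$ — and that the diagram commutes on the nose by the idempotent identity just computed.

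**Main obstacle.**
The genuinely substantive point is the first step: verifying $U^c \subseteq \supp(\P)$ and, relatedly, checking that the vertical map is a legitimate localization functor on $\T(U)$ rather than merely a functor out of $\T$. The containment $U^c \subseteq \supp(\P)$ is where $\P \in U$ is used, and it hinges on $U^c$ being closed under specialization (which holds since Thomason subsets are in particular specialization-closed) together with $\P \notin U^c$: any $\Q$ with $\P \subseteq \Q$, if it lay in $U^c$, would drag $\P$ into $U^c$ by specialization-closure. Once this is in place, the rest is a formal manipulation of idempotents via \cref{rec-tensoring-idempotents}, and the commutativity of the triangle is immediate. I would also remark, for cleanliness, that one should identify $\T_\P$ with $\f_{\supp(\P)} \otimes \T(U)$ inside $\T(U)$ and check the factored localization is the finite localization at the Thomason subset $U \cap \supp(\P) = U \setminus \mathrm{gen}(\P)$ of $\Spc(\T(U)^\omega) = U$, but this is bookkeeping rather than a real difficulty.
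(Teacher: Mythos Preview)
Your proposal is correct and follows essentially the same approach as the paper. The paper's proof simply invokes \cref{rem:restriction functors} (the restriction functor $\mathrm{res}^{V,\ast}_U = -\otimes \f_{U^c}$ for $U\subseteq V$) together with \cref{rem-generalizations} to note that $\mathrm{gen}(\P)\subseteq U$; your argument unpacks exactly these two ingredients, proving the containment $U^c\subseteq \supp(\P)$ via specialization-closure and then computing the idempotent identity $\f_{U^c}\otimes\f_{\supp(\P)}\simeq\f_{\supp(\P)}$ by hand from \cref{rec-tensoring-idempotents}(c).
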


\begin{proof}
    This is a particular case of  \cref{rem:restriction functors}, and the alternative description of $\T_\P$ by noticing that $\mathrm{gen}(\P)$ must be contained in $U$; see  \cref{rem-generalizations}.
\end{proof}

We are in a position to show that geometric purity in  $\T$ only depends on localizations at closed points of the Balmer spectrum of $\T$. More concretely: 
 
\begin{proposition}\label{prop-purity-closed-point}
 Let $\T$  be a big tt-category. Then a morphims $f\colon x\to y$ in $\T$ is $g$-phantom if and only if it $\iota_\mathcal{M}^\ast(f)$ is phantom in $\T_\mathcal{M}$, for each closed point $\mathcal{M}$ of $\Spc(\T^\omega)$.
\end{proposition}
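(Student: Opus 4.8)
The plan is to reduce localization at an arbitrary prime $\P$ to localization at a closed point containing it, using the fact that every prime of a spectral space specializes to a closed point, and that the support $\supp(\P)$ can be enlarged to $\supp(\mathcal{M})$ for a closed point $\mathcal{M}$ with $\P \in \mathrm{gen}(\mathcal{M})$, i.e. $\mathcal{M} \in \overline{\{\P\}}$... wait, one must be careful about the direction of specialization. The relevant fact is: in the spectral space $\Spc(\T^\omega)$, the closure of $\{\P\}$ contains a closed point $\mathcal{M}$, and then $\P \in \mathrm{gen}(\mathcal{M})$, equivalently $\mathcal{M} \supseteq \P$ as primes, equivalently $\supp(\mathcal{M}) \subseteq \supp(\P)$. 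Hence $\loc(\mathcal{M}) \subseteq \loc(\P)$, so the localization $\iota_\P^\ast$ factors through $\iota_\mathcal{M}^\ast$: there is a geometric functor $\T_\mathcal{M} \to \T_\P$ (namely tensoring with $\f_{\supp(\P)}$, or equivalently a further finite localization) with $\iota_\P^\ast \simeq (\T_\mathcal{M} \to \T_\P)\circ \iota_\mathcal{M}^\ast$. This is the analogue of \cref{lem: loc at U then at p} with $U$ replaced by the (generally non-quasi-compact) generalization-closed set $\mathrm{gen}(\mathcal{M})$, and it follows from the same computation: $\supp(\mathcal{M}) \subseteq \supp(\P)$ and the compatibility of the idempotents in \cref{rec-tensoring-idempotents}.

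First I would establish this factorization carefully. The forward implication of the proposition is trivial: if $f$ is g-phantom then by definition $\iota_\mathcal{M}^\ast(f)$ is phantom for every point $\mathcal{M}$, in particular every closed point. For the converse, suppose $\iota_\mathcal{M}^\ast(f)$ is phantom in $\T_\mathcal{M}$ for every closed point $\mathcal{M}$. Fix an arbitrary prime $\P \in \Spc(\T^\omega)$. Since $\Spc(\T^\omega)$ is spectral, hence in particular every point specializes to a closed point (spectral spaces are sober and quasi-compact, and one uses that the closure of a point is quasi-compact and admits a closed point by Zorn), there is a closed point $\mathcal{M}$ with $\P \in \overline{\{\P\}} \ni \mathcal{M}$... rather $\mathcal{M} \in \overline{\{\P\}}$, i.e. $\supp(\mathcal{M}) \subseteq \supp(\P)$. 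By the factorization above, $\iota_\P^\ast(f) \simeq G(\iota_\mathcal{M}^\ast(f))$ for a geometric functor $G \colon \T_\mathcal{M} \to \T_\P$. Now apply \cref{prop-phantom-gphantom}: since $\iota_\mathcal{M}^\ast(f)$ is phantom and $G$ is geometric, $G(\iota_\mathcal{M}^\ast(f)) \simeq \iota_\P^\ast(f)$ is phantom in $\T_\P$. As $\P$ was arbitrary, $f$ is g-phantom.

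The main obstacle — really the only non-formal point — is verifying that $\iota_\P^\ast$ genuinely factors through $\iota_\mathcal{M}^\ast$ via a \emph{geometric} functor when $\mathrm{gen}(\mathcal{M})$ is not quasi-compact, since \cref{rec-category-thomason} and \cref{lem: loc at U then at p} are phrased for Thomason complements / quasi-compact opens. One resolves this either by writing $\mathrm{gen}(\mathcal{M}) = \bigcap_{\mathcal{M} \in U \in \mathcal{QO}(\T)} U$ (\cref{rem-generalizations}) and passing to a suitable (co)limit of the quotients $\T(U)$, or more directly by observing that $\loc(\mathcal{M}) \subseteq \loc(\P)$ are both \emph{smashing} (being $\T_{\supp(\mathcal{M})}$ and $\T_{\supp(\P)}$), so $\T_\P = \T/\loc(\P)$ is a smashing localization of $\T_\mathcal{M} = \T/\loc(\mathcal{M})$, and the resulting Verdier quotient functor $\T_\mathcal{M} \to \T_\P$ is coproduct-preserving and monoidal — hence geometric — because it is identified with $\f_{\supp(\P)} \otimes_{\T_\mathcal{M}} (-)$ using \cref{rec-tensoring-idempotents}(c) (which gives $\f_{\supp(\P)} \otimes \f_{\supp(\mathcal{M})} \simeq \f_{\supp(\P)}$ since $\supp(\mathcal{M}) \subseteq \supp(\P)$). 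Once this factorization is in hand, the rest is an immediate application of \cref{prop-phantom-gphantom} together with a standard spectral-space argument for the existence of the closed point.
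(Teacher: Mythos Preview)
Your argument is correct and matches the paper's proof: factor $\iota_\P^\ast$ through $\iota_\mathcal{M}^\ast$ for a closed point $\mathcal{M}$ in $\overline{\{\P\}}$ via \cref{rem:restriction functors} (which is stated for complements of arbitrary Thomason subsets, so your worry about quasi-compactness of $\mathrm{gen}(\mathcal{M})$ is unnecessary---$\supp(\mathcal{M})$ is Thomason), then apply \cref{prop-phantom-gphantom}. One small slip: $\P \in \mathrm{gen}(\mathcal{M})$ means $\mathcal{M} \subseteq \P$, not $\mathcal{M} \supseteq \P$, but your subsequent containments $\supp(\mathcal{M}) \subseteq \supp(\P)$ and $\loc(\mathcal{M}) \subseteq \loc(\P)$ are the correct ones and the argument goes through unchanged.
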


\begin{proof}
 Assume that $f$ is such that $\iota_\mathcal{M}^\ast(f)$ is phantom in $\T_\mathcal{M}$ for all closed points $\mathcal{M}$ of $\Spc(\T^\omega)$. Fix $\P \in \Spc(\T^\omega)$. We need to verify that $\iota_\P^\ast(f)$ is phantom. For this, recall that $\P$ must contain a closed point, say $\mathcal{M}$. Then we have the relation $ \mathrm{gen}(\P)\subseteq \mathrm{gen}(\mathcal{M})$. By  \cref{rem:restriction functors}, we obtain a commutative diagram 
 \begin{center}
        \begin{tikzcd}
            &  \T \arrow[rd,"\iota_{\mathrm{gen}(\P)^c}^\ast"] \arrow[ld,"\iota_{\mathrm{gen}(\mathcal{M})^c}^\ast"'] & \\
            \T(\mathrm{gen}(\mathcal{M})) \arrow[rr,"\mathrm{res}^{\mathrm{gen}(\mathcal{M}),\ast}_{\mathrm{gen}(\P)}"] &  & \T(\mathrm{gen}(\P)).
        \end{tikzcd}
    \end{center} 
    In other words, the functor $\iota^\ast_{\mathrm{gen}(\P)}=\iota_\P^\ast$ factors through $\T(\mathrm{gen}(\mathcal{M}))=\T_\mathcal{M}$. By  \cref{prop-phantom-gphantom}, we get that  $\mathrm{res}^{\mathrm{gen}(\mathcal{M}),\ast}_{\mathrm{gen}(\P)}$ preserves phantoms. We conclude that $\iota_\mathcal{P}^\ast(f)$ is phantom as we wanted.  
\end{proof}

\subsection{Covers by quasi-compact opens} We now turn to the relationship between geometric purity in $\T$ and that in $\T(U)$, where $U$ is a quasi-compact open subset of the Balmer spectrum of $\T^\omega$.

\begin{proposition}\label{prop-iota-gp-gp}
     Let $U$ be a quasi-compact open subset of $\Spc(\T^\omega)$. Let $f$ be a g-phantom in $\T$. Then $\iota_{U^c}^\ast(f)$ is g-phantom in $\T(U)$. 
\end{proposition}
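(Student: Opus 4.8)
The plan is to reduce the statement about $g$-phantoms in $\T(U)$ to the known behaviour of $g$-phantoms in $\T$ by exploiting the factorization of the tt-stalk localizations through $\T(U)$. Concretely, recall from \cref{def-gpurity} that $\iota_{U^c}^\ast(f)$ is $g$-phantom in $\T(U)$ if and only if $\iota_{\Q}^{\ast\, \T(U)}(\iota_{U^c}^\ast(f))$ is phantom in $\T(U)_\Q$ for every prime $\Q \in \Spc(\T(U)^\omega)$, where $\iota_\Q^{\ast\, \T(U)}$ denotes the stalk localization of the tt-category $\T(U)$. The crucial point is that, since $U$ is the complement of a Thomason subset, $\Spc(\T(U)^\omega)$ is (homeomorphic to) the open subspace $U \subseteq \Spc(\T^\omega)$, and for each such $\Q$ the tt-stalk $\T(U)_\Q$ is canonically equivalent to $\T_\Q$, compatibly with the localization functors.

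First I would make that identification precise. Given a prime $\Q$ of $\T(U)^\omega$, it corresponds to a prime $\P$ of $\T^\omega$ lying in $U$, and \cref{lem: loc at U then at p} gives a commutative triangle in which $\iota_\P^\ast\colon \T \to \T_\P$ factors as $\T \xrightarrow{\iota_U^\ast} \T(U) \xrightarrow{-\otimes \f_{\mathrm{gen}(\P)^c}} \T_\P$. The vertical functor here is exactly the tt-stalk localization of $\T(U)$ at $\Q$ (both are the finite localization killing the objects with support in $\mathrm{gen}(\P)^c \cap U$), so $\T(U)_\Q \simeq \T_\P$ and $\iota_\Q^{\ast\, \T(U)} \circ \iota_{U^c}^\ast \simeq \iota_\P^\ast$ under this equivalence.

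With this in hand the argument is immediate: let $f$ be $g$-phantom in $\T$ and let $\Q$ be any prime of $\T(U)^\omega$, corresponding to $\P \in U$. Then
\[
\iota_\Q^{\ast\, \T(U)}\bigl(\iota_{U^c}^\ast(f)\bigr) \simeq \iota_\P^\ast(f),
\]
which is phantom in $\T_\P \simeq \T(U)_\Q$ because $f$ is $g$-phantom. As $\Q$ was arbitrary, $\iota_{U^c}^\ast(f)$ is $g$-phantom in $\T(U)$. Alternatively, using the equivalent \cref{def-gpurity-alter}, one can argue via idempotents: $\iota_{U^c}^\ast(f) \otimes \f_\Q^{\T(U)}$ corresponds under $\T(U) \simeq \f_{U^c}\otimes\T$ to $f \otimes \f_{U^c} \otimes \f_\P \simeq f \otimes \f_\P$ (using $\mathrm{gen}(\P) \subseteq U$, so $\supp(\P) \supseteq U^c$ and hence $\f_{U^c}\otimes\f_\P \simeq \f_\P$ by \cref{rec-tensoring-idempotents}(c)), which is phantom since $f$ is $g$-phantom.

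The only genuine work is the bookkeeping in the first step: verifying that the stalk localizations of $\T(U)$ really do agree with the stalk localizations of $\T$ restricted to primes in $U$, and that the identification $\T(U)_\Q \simeq \T_\P$ is compatible with the composites of localization functors. This is a routine consequence of the transitivity of finite localizations (equivalently, \cref{rec-tensoring-idempotents} together with \cref{rem:restriction functors} and \cref{lem: loc at U then at p}), so I do not anticipate any real obstacle; the main care needed is to track which Thomason subset of which spectrum each idempotent is attached to.
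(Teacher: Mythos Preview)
Your argument is correct and is essentially the same as the paper's: the paper's proof consists of the single sentence that this follows from \cref{lem: loc at U then at p} together with the identification $\Spc(\T(U)^\omega)\cong U$, which is exactly the factorization you spell out in detail.
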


\begin{proof}
    This follows from  \cref{lem: loc at U then at p} and the fact that $\Spc(\T(U)^\omega) \cong U$ via the induced map.
\end{proof}

\begin{proposition}\label{prop-iotaU-gp-gp}
     Let $U$ be a quasi-compact open subset of $\Spc(\T^\omega)$. Then the functor $\iota_{U^c,\ast}$ preserves g-pure-injective objects. 
\end{proposition}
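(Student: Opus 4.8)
The plan is to show that for a quasi-compact open $U\subseteq\Spc(\T^\omega)$ and a g-pure-injective object $x\in\T(U)$, the pushforward $\iota_{U^c,\ast}(x)$ remains g-pure-injective in $\T$. By \cref{ginj kill g-phantoms}, it suffices to prove that every g-phantom map $f\colon y\to\iota_{U^c,\ast}(x)$ in $\T$ is trivial. The key input is \cref{prop-iota-gp-gp}: since $f$ is g-phantom, $\iota_{U^c}^\ast(f)$ is g-phantom in $\T(U)$. So the strategy mirrors the proof of \cref{prop-iota*-gpure-pure}: transport $f$ across the adjunction $\iota_{U^c}^\ast\dashv\iota_{U^c,\ast}$ and use g-pure-injectivity of $x$ in $\T(U)$.

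Concretely, first I would use the adjunction to produce from $f$ the transpose morphism
\[
\iota_{U^c}^\ast(y)\xrightarrow{\ \iota_{U^c}^\ast(f)\ }\iota_{U^c}^\ast(\iota_{U^c,\ast}(x))\xrightarrow{\ \epsilon_x\ }x,
\]
where $\epsilon$ is the counit; since $\iota_{U^c,\ast}$ is fully faithful, $\epsilon_x$ is an isomorphism. Next I would observe that $\iota_{U^c}^\ast(f)$ is g-phantom in $\T(U)$ by \cref{prop-iota-gp-gp}, hence the composite $\epsilon_x\circ\iota_{U^c}^\ast(f)$ is g-phantom as well (being a g-phantom map post-composed with an isomorphism; g-phantoms are closed under composition with arbitrary maps since phantoms are and $\iota_\P^\ast$ is a functor). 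Then, because $x$ is g-pure-injective in $\T(U)$, \cref{ginj kill g-phantoms} applied inside $\T(U)$ forces this composite to be trivial. Finally, since the transpose is zero and the adjunction bijection $\Hom_\T(y,\iota_{U^c,\ast}(x))\cong\Hom_{\T(U)}(\iota_{U^c}^\ast(y),x)$ sends $f$ to exactly this transpose, we conclude $f=0$. Applying \cref{ginj kill g-phantoms} once more, now in $\T$, gives that $\iota_{U^c,\ast}(x)$ is g-pure-injective.

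I do not expect a serious obstacle here; the only point requiring a little care is confirming that g-phantom maps in $\T(U)$ are annihilated by g-pure-injectives in $\T(U)$ in the same way as in $\T$ — but this is just \cref{ginj kill g-phantoms} read for the rigidly-compactly generated tt-category $\T(U)$ in place of $\T$, which is legitimate since all of the preceding formalism applies verbatim to $\T(U)$. A second routine check is that $\iota_{U^c}^\ast$ being geometric (it is a finite localization, see \cref{rec-category-thomason}) is what guarantees the adjunction $\iota_{U^c}^\ast\dashv\iota_{U^c,\ast}$ with $\iota_{U^c,\ast}$ fully faithful, so that $\epsilon_x$ is invertible. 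Everything else is formal manipulation of the adjunction exactly as in \cref{prop-iota*-gpure-pure}.
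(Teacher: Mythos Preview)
Your proposal is correct and follows essentially the same approach as the paper: both transport a g-phantom $f\colon y\to\iota_{U^c,\ast}(x)$ across the adjunction $\iota_{U^c}^\ast\dashv\iota_{U^c,\ast}$, invoke \cref{prop-iota-gp-gp} to see the transpose is g-phantom in $\T(U)$, and then use g-pure-injectivity of $x$ together with \cref{ginj kill g-phantoms} to conclude. The paper's version is simply terser.
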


\begin{proof}
Let $x$ be a g-pure-injective object in $\T(U)$.  Consider a g-phantom map $f\colon y\to \iota_{U^c,\ast}(x)$ in $\T$. We will show that $f$ is trivial and hence the result will follow by  \cref{ginj kill g-phantoms}. By  adjunction, we obtain a unique map 
\[
\Bar{f}\colon\iota_{U^c}^\ast(y)\to \iota_{U^c}^\ast(\iota_{U^c,\ast}(x))\cong x
\]
where the isomorphism is the counit since  $\iota_{U^c,\ast}$ is fully faithful. Now, we claim that $\Bar{f}$ is $g$-phantom in $\T(U)$. But this is  \cref{prop-iota-gp-gp}.  
\end{proof}

We conclude this section with the following lemma, which will play a crucial role in the rest of this work.

\begin{lemma}\label{lem-purity-covers}
    Let $\{U_i\}_{i\in I}$ be a finite cover of $\mathrm{Spc}(\T^\omega)$ by quasi-compacts opens.  Let $x$ be an object in $\T$. Write $\iota_i^\ast$ and $\f_{i}$ to denote $\iota_{U_i^c}^\ast$ and $\f_{U_i^c}$, respectively.  Then the following properties hold.
    \begin{enumerate}
        \item The natural map 
         \[
         \psi\colon x\to \bigoplus_{i\in I}  \f_i\otimes x
         \] 
         is a g-pure monomorphism.

        \item If  $x$ is an  indecomposable g-pure-injective object in $\T$, then  there is an isomorphism $x\simeq \f_i \otimes x$ for some $i\in I$.
    \end{enumerate}
\end{lemma}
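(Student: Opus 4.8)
For part (a), the plan is to use the reformulation of g-purity via tensoring with the right idempotents $\f_\P$ from \cref{def-gpurity-alter}. Fixing a prime $\P$, I want to show that $\iota_\P^\ast(\psi)$ is a split monomorphism in $\T_\P$, which is certainly pure. The key point is that $\P$ lies in at least one of the $U_i$, say $\P \in U_{i_0}$. By \cref{lem: loc at U then at p} the functor $\iota_\P^\ast$ factors through $\iota_{i_0}^\ast$, and the composite $\T(U_{i_0}) \to \T_\P$ is tensoring with an idempotent; concretely $\iota_\P^\ast(\f_{i_0}\otimes x) \simeq \iota_\P^\ast(x)$ because $\f_{i_0}$ becomes the unit after localizing at $\P$ (using $\f_{U_{i_0}^c} \otimes \f_{\mathrm{gen}(\P)^c} \simeq \f_{\mathrm{gen}(\P)^c}$ since $\mathrm{gen}(\P) \subseteq U_{i_0}$, via \cref{rec-tensoring-idempotents}(c)). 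Thus the $i_0$-component of $\iota_\P^\ast(\psi)$ is an isomorphism, so $\iota_\P^\ast(\psi)$ is a split monomorphism. Since this holds for every $\P$, $\psi$ is a g-pure monomorphism.

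For part (b), assume $x$ is indecomposable and g-pure-injective. By part (a), $\psi\colon x \to \bigoplus_{i\in I} \f_i \otimes x$ is a g-pure monomorphism; since $x$ is g-pure-injective, $\psi$ splits, so $x$ is a direct summand of $\bigoplus_i \f_i\otimes x$. The finiteness of the cover is essential here: a finite coproduct is a finite product, so by the Krull–Schmidt–type argument for indecomposable objects — or more robustly, because $x$ is g-pure-injective hence pure-injective (\cref{cor-gpinj is pinj}), and indecomposable pure-injectives have local endomorphism rings — the split injection $x \hookrightarrow \bigoplus_i \f_i\otimes x$ with $x$ indecomposable forces $x$ to be a summand of $\f_i\otimes x$ for some single $i$. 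To upgrade "summand" to "isomorphism", I use that $\f_i\otimes -$ is idempotent: if $x \simeq (\f_i\otimes x)\oplus w$, then applying $\f_i\otimes -$ gives $\f_i\otimes x \simeq \f_i\otimes x \oplus \f_i\otimes w$, and $x\simeq \f_i\otimes x\oplus w$ again, and comparing we must have $\f_i \otimes w$ a summand of $w$; chasing this, since the inclusion $\f_i \otimes x \to x$ split by $x \to \f_i\otimes x$ and $\f_i\otimes x \to \f_i\otimes x$ is the identity (as $\f_i\otimes x \to x \to \f_i\otimes x$ recovers the idempotent on $\f_i\otimes x$ which is the identity), the complement $w$ satisfies $\f_i\otimes w = 0$, and indecomposability of $x$ with $x \simeq \f_i\otimes x \oplus w$ forces $w=0$, i.e. $x\simeq \f_i\otimes x$.

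The main obstacle I anticipate is the bookkeeping in part (b): extracting a single index $i$ from the finite coproduct and then promoting the resulting split monomorphism $x \to \f_i\otimes x$ to an isomorphism. The cleanest route is to combine indecomposability (local endomorphism ring, since $x$ is pure-injective by \cref{cor-gpinj is pinj}) with the idempotence of $\f_i\otimes -$: the composite $x \xrightarrow{\psi_i} \f_i\otimes x \xrightarrow{\f_i\otimes(\text{counit})} x$ is an idempotent endomorphism of $x$ which is nonzero for at least one $i$ (their sum is the identity, so not all can be in the maximal ideal), hence an isomorphism by locality, which simultaneously shows $\psi_i$ is a split monomorphism and $\f_i\otimes(\text{counit})$ a split epimorphism; since $\f_i\otimes x$ is then both a summand of $x$ (indecomposable) and receives a split mono from $x$, it must be isomorphic to $x$. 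Part (a) is routine once the idempotent identities are in place.
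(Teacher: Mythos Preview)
Your approach for both parts matches the paper's. Part (a) is essentially identical: fix $\P\in U_{i_0}$, use \cref{lem: loc at U then at p} (equivalently \cref{rec-tensoring-idempotents}(c)) to see that the $i_0$-component of $\iota_\P^\ast(\psi)$ is an isomorphism, hence $\iota_\P^\ast(\psi)$ is a (split, so) pure monomorphism.

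For part (b) your strategy---split $\psi$, extract a single index via the local endomorphism ring of the indecomposable pure-injective $x$, then upgrade ``summand'' to ``isomorphism'' using idempotence of $\f_i$---is also the paper's (the paper phrases the middle step as Krull--Schmidt for pure-injectives, citing Prest). However, two details in your write-up need correction. First, there is no natural ``counit'' $\f_i\otimes x\to x$; the map you actually want is the $i$-th component $r_i$ of an abstract retraction $r$ of $\psi$, which exists only because $x$ is g-pure-injective. Second, the composites $r_i\psi_i\in\mathrm{End}(x)$ are \emph{not} idempotent in general; what is true is that $\sum_i r_i\psi_i=\mathrm{id}_x$, and locality of $\mathrm{End}(x)$ then forces some $r_{i_0}\psi_{i_0}$ to be a unit, so $\psi_{i_0}$ is split mono. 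For the final upgrade your chasing is incomplete (it tacitly uses cancellation); the paper's one-line finish is cleaner: once $x$ is a retract of $\f_{i_0}\otimes x$ it lies in $\T(U_{i_0})=\f_{i_0}\otimes\T$ (a thick subcategory, closed under retracts), whence $x\simeq\f_{i_0}\otimes x$ immediately.
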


\begin{proof}
    For the first claim, let $\P$ be a point in $\mathrm{Spc}(\T^\omega)$ and assume that it lies in $U_k$ for $k\in I$. By  \cref{lem: loc at U then at p}, we get
    \begin{equation}\label{eq:iotak}
        \iota_{\P}^\ast (x)\simeq \iota_{\P}^\ast(\f_k\otimes x).
    \end{equation}
    Thus,  we obtain that the map
    \[
    \iota_\P^\ast(\psi)\colon \iota_\P^\ast(x)\to \iota_{\P}^\ast(\f_k\otimes x)\oplus \bigoplus_{i\in I\backslash\{k\}} \iota_\P^\ast (\f_i\otimes  x)
    \]
    is pure monomorphism. Indeed, composition with the projection to the component corresponding to $k$ is pure monomorphism using  \cref{eq:iotak}. We deduce that $\psi$ is a g-pure monomorphism in $\T$ as we wanted. 

    Now, assume that $x$ is an indecomposable g-pure-injective object in $\T$. Then   the natural map $\psi$ has to split since $x$ is g-pure-injective. 
    Since the Krull-Schmidt property holds for pure-injective objects (see Theorems E.1.23 and E.1.24 in \cite{Prest})
    we deduce that $x$ has to be a direct summand of  $\f_i\otimes x$, for some $i\in I$. This implies that $x\in \T(U_i)$. Recall that we can identify $\T(U_i)$ with $\f_i\otimes \T$; see \cref{rec-category-thomason}. Therefore $x\simeq \f_i\otimes x$, as we wanted. 
\end{proof}

\section{Reduction to stalks}

In this section, we show that any indecomposable g-pure-injective object of $\T$ arises as the pushforward of a pure-injective object of the tt-stalk $\T_\P$ for some triangular prime $\P$. Our proof relies on some higher categorical results; hence, from now on, we will assume that $\T$ is a rigidly-compactly generated stable $\infty$-category. We will keep the terminology and simply refer to $\T$ as a rigidly-compactly generated tt-category. 

The main result of this section is the following.

\begin{theorem}\label{thm-gp-stalks} 
   Let $\T$ be a rigidly-compactly generated tt-category, and let $x$ be an indecomposable g-pure-injective object in $\T$. Then there is a prime $\P$ in $\T^\omega$ and a pure-injective object $y$ in $\T_\P$ such that $\iota_{\P,\ast}(y)\simeq x$.  
\end{theorem}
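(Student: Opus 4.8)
The strategy is to first reduce the support of $x$ to a single tt-prime, and then to recognize $x$ as lying in the essential image of $\iota_{\P,\ast}$ for that prime. By \cref{cor-gpinj is pinj}, $x$ is an ordinary indecomposable pure-injective; since the Krull--Schmidt property holds for pure-injectives (Theorems E.1.23 and E.1.24 in \cite{Prest}), every splitting of $x$ off a product or coproduct is controlled. The first step is to show that the ``geometric support'' of $x$ --- the set of $\P$ with $\iota_\P^\ast(x)\ne 0$ --- has a unique minimal element; equivalently, that there is a single prime $\P$ with $\iota_\P^\ast(x)\ne 0$ and $x\simeq \f_\P\otimes x$. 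For a \emph{finite} cover by quasi-compact opens this follows from \cref{lem-purity-covers}(b): $x$ is a summand of some $\f_i\otimes x$, hence $x\simeq \f_i\otimes x$, i.e.\ $x\in\T(U_i)$. One then iterates: intersecting the $U_i$ with a cover of $U_i$, one localizes further and further. The $\infty$-categorical input is needed precisely to make this iteration converge, i.e.\ to pass from finite covers to the (generally non-finite, cofiltered) system of quasi-compact opens containing $\P$ and conclude $x\simeq \f_\P\otimes x$ for a well-chosen $\P$; concretely, one writes $\T_\P$ as a filtered colimit (in $\mathrm{Pr}^L_{\mathrm{st}}$ or on compacts, as the idempotent completion of $\colim \T^\omega(U)$ over $\P\in U\in\mathcal{QO}(\T)$, cf.\ \cref{rem-generalizations}) and uses that the various $\iota_{U^c}^\ast$ are compatible (\cref{rem:restriction functors}) together with continuity of the relevant mapping spectra.

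The second step, once a prime $\P$ with $x\simeq\f_\P\otimes x$ has been fixed, is to produce the pure-injective $y\in\T_\P$. Set $y\coloneqq \iota_\P^\ast(x)$. Because $x$ lies in $\f_\P\otimes\T$, which under \cref{rem-identification-ImL_S} is identified with $\T_\P$ sitting inside $\T$ as $\loc(\P)^\perp = \mathrm{Im}(\iota_{\P,\ast})$, we have $\iota_{\P,\ast}(y) = \iota_{\P,\ast}\iota_\P^\ast(x) \simeq \f_\P\otimes x \simeq x$, which is the desired formula. It remains only to check $y$ is pure-injective in $\T_\P$. This is \cref{iota preserves definables}(b): $x = \iota_{\P,\ast}(y)$ is pure-injective in $\T$ (by \cref{cor-gpinj is pinj}) and lies in the essential image of $\iota_{\P,\ast}$, hence $\iota_\P^\ast(x) = y$ is pure-injective in $\T_\P$. (Indecomposability of $y$ is automatic since $\iota_{\P,\ast}$ is fully faithful, though the statement does not demand it.)

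\textbf{Main obstacle.} The delicate point is the first step: upgrading the finite-cover statement \cref{lem-purity-covers}(b) to the existence of a \emph{single} prime $\P$ with $x\simeq\f_\P\otimes x$. One must rule out the a priori possibility that $x$ ``shrinks'' through an infinite strictly decreasing chain of quasi-compact opens without ever landing in a tt-stalk, and one must verify that $\iota_\P^\ast$ applied to $x$ is nonzero for the prime one lands on --- otherwise \cref{trivial g-inj are detected by localizitions} would force $x\simeq 0$. Here is where the $\infty$-categorical hypothesis does real work: writing $\T_\P$ (or just its compact part) as a filtered colimit of the $\T(U)$, one shows that a compact object and a map to $x$ witnessing non-triviality at $\P$ already appear at some finite stage $U$, so that $x$ and this data descend to $\T(U)$; then a Zorn/quasi-compactness argument on the poset of quasi-compact opens carrying $x$, together with spectrality of $\Spc(\T^\omega)$, pins down the prime. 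The remaining steps are then formal consequences of \cref{iota preserves definables} and the identification $\f_\P\otimes-\simeq\iota_{\P,\ast}\iota_\P^\ast$.
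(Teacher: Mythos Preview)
Your plan is correct and follows essentially the same route as the paper: iterate \cref{lem-purity-covers}(b) to trap $x$ in smaller and smaller $\T(U)$'s, pass to a limit to land in a single stalk, and then read off $y=\iota_\P^\ast(x)$ using \cref{iota preserves definables}(b) and the identification $\f_\P\otimes-\simeq\iota_{\P,\ast}\iota_\P^\ast$. Your second step is exactly the (implicit) conclusion of the paper's argument.

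Where you can sharpen is the convergence step. You invoke filtered colimits of categories and continuity of mapping spectra, and speak of tracking a compact object witnessing non-triviality; none of this is needed. The paper's device is lighter: it uses only a filtered colimit of \emph{objects} in $\T$, namely the formula $\f_\P\simeq\colim_{\P\in U\in\mathcal{QO}(\T)}\f_{U^c}$ (\cref{stalks as a colimit}). Once one has a nested family $\{U_\alpha\}$ of quasi-compact opens with $x\simeq\f_{U_\alpha^c}\otimes x$ for every $\alpha$, the intersection is non-empty by the finite-intersection property in the Hochster dual (\cref{lem-intersections}); choosing $\P$ there and arranging that $\{U_\alpha\}$ is cofinal among the quasi-compact opens containing $\P$ gives $\f_\P\otimes x\simeq\colim_\alpha(\f_{U_\alpha^c}\otimes x)\simeq x$ because the diagram is constant. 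An equivalent and perhaps cleaner phrasing of the same idea is to observe that $\mathcal{F}=\{U\in\mathcal{QO}(\T)\mid x\in\T(U)\}$ is a \emph{prime} filter on the distributive lattice $\mathcal{QO}(\T)$: it is closed under finite intersection since $\f_{U^c}\otimes\f_{V^c}\simeq\f_{(U\cap V)^c}$, and primeness is exactly \cref{lem-purity-covers}(b). Stone duality for spectral spaces then hands you the unique $\P$ with $\mathcal{F}=\{U\mid \P\in U\}$, and cofinality is automatic. Finally, the non-vanishing $\iota_\P^\ast(x)\not\simeq 0$ that you flag is immediate from $x\simeq\iota_{\P,\ast}\iota_\P^\ast(x)$ and $x\neq 0$; no appeal to \cref{trivial g-inj are detected by localizitions} is needed.
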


We need some preparations. In particular, the following is a straightforward observation but it will play a relevant role in our proof of  \cref{thm-gp-stalks}. 

\begin{lemma}\label{lem-intersections}
    Let $X$ be a spectral space. Let $I$ be a set, and  consider an arbitrary  collection $\{U_i\}_{i\in I}$ of quasi-compact open  subsets of $X$ with the finite intersection property; that is, any finite subfamily of $\{U_i\}_{i\in I}$ has non-empty intersection. Then $\bigcap_{i\in I}U_i\not= \varnothing$. 
\end{lemma}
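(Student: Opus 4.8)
The statement is a compactness property of spectral spaces: a family of quasi-compact opens with the finite intersection property has non-empty total intersection. The natural move is to pass to the Hochster dual, where quasi-compact opens become closed sets, and then invoke ordinary topological compactness.

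First I would recall (see \cref{rem-hochster-dual}) that $X$ is spectral, so its Hochster dual $X^\vee$ is again a spectral space, in particular quasi-compact, and that the quasi-compact open subsets of $X$ are precisely the closed subsets of $X^\vee$. Thus each $U_i$ is closed in $X^\vee$. The finite intersection property of $\{U_i\}_{i\in I}$ in $X$ is literally the statement that the family of closed sets $\{U_i\}_{i\in I}$ in $X^\vee$ has the finite intersection property there. Since $X^\vee$ is quasi-compact, any family of closed subsets with the finite intersection property has non-empty intersection: otherwise the complements would form an open cover of $X^\vee$ admitting no finite subcover. Therefore $\bigcap_{i\in I} U_i \neq \varnothing$, and this intersection is the same whether computed in $X$ or in $X^\vee$ since the two spaces have the same underlying set.

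An alternative, if one prefers to avoid invoking the Hochster dual explicitly, is to argue directly: the quasi-compact opens of a spectral space form a basis of opens closed under finite intersection, and one can use the fact that a spectral space is, by definition, quasi-compact and moreover coherent, so that a suitable filtered system of non-empty ``constructible'' closed sets (the $U_i$ viewed as closed in the constructible/patch topology) has non-empty intersection by quasi-compactness of the patch topology. Either way the essential input is the quasi-compactness of the patch (equivalently, Hochster-dual) topology, which holds for any spectral space.

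There is no real obstacle here; the only thing to be careful about is stating correctly which topology makes the $U_i$ closed and quasi-compact, i.e.\ being precise that it is the Hochster dual $X^\vee$ (equivalently the constructible topology) in which each $U_i$ is a closed subset of a quasi-compact space. Once that identification is in place the conclusion is immediate from the standard characterization of quasi-compactness via families of closed sets with the finite intersection property.
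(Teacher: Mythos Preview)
Your proposal is correct and matches the paper's own proof essentially line for line: pass to the Hochster dual $X^\vee$, where the quasi-compact opens of $X$ become closed subsets of the (quasi-compact) space $X^\vee$, and then apply the standard characterization of compactness via the finite intersection property of closed sets. The alternative via the patch topology that you mention is equivalent but not needed.
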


\begin{proof}
  Recall that the Hochster dual $X^\vee$ of $X$ is a spectral space with the same underlying set as $X$, and whose open subsets are the Thomason subsets of $X$. In particular, note that our  collection of quasi-compact open subsets $\{U_i\}_{i \in I}$ becomes a  collection of closed subsets in $X^\vee$ with the finite intersection property. The claim then follows from the well-known reformulation of compactness in terms of families of closed subsets satisfying the finite intersection property.
\end{proof}

Let us record the following relevant description of the idempotent $\f_\P$, which is a reformulation of \cite[Lemma 6.6]{Ste13} and \cite[Lemma 2.3.6]{Ste11}. This reformulation also appears in \cite[Corollary 3.10]{balchin2024profinite}. Recall that $\mathcal{QO}(\T)$ denotes the set of quasi-compact open subsets of $\mathrm{Spc}(\T^\omega)$; see \cref{notation-qo}.

\begin{lemma}\label{stalks as a colimit}
    Let $\T$ be a rigidly-compactly generated tt-category and $\P\in \Spc(\T^\omega)$. Then the natural map  
    \[
     \underset{\P\in U\in \mathcal{QO}(\T)}{\colim}  \f_{U^c}\to \f_\P
    \]
    is an equivalence. 
\end{lemma}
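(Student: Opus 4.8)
The plan is to exploit the structure of $\T$ as a rigidly-compactly generated tt-category, where the Balmer spectrum $\Spc(\T^\omega)$ is a spectral space and $\T_\P = \T/\loc(\P)$ is obtained as the finite localization associated to the Thomason subset $\supp(\P) = \gen(\P)^c$. By \cref{rec-category-thomason}, the functor $\iota_{\P,\ast}\circ\iota_\P^\ast$ is identified with $\f_{\supp(\P)}\otimes - = \f_\P\otimes -$. The key input is \cref{rec-generalizations}, which expresses $\gen(\P)$ as the intersection $\bigcap_{\P\in U\in\mathcal{QO}(\T)}U$ of the quasi-compact open neighbourhoods of $\P$. The strategy is therefore to relate the idempotent $\f_\P$ associated to the Thomason subset $\gen(\P)^c = \bigcup_{\P\in U\in\mathcal{QO}(\T)}U^c$ to the colimit of the idempotents $\f_{U^c}$ over the filtered poset $\{U\in\mathcal{QO}(\T)\mid \P\in U\}$ (ordered by reverse inclusion, so that $U'\subseteq U$ gives a map $\f_{U^c}\to\f_{U'^c}$ as in \cref{rem:restriction functors}).

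First I would record that the poset $\{U\in\mathcal{QO}(\T)\mid \P\in U\}$ is cofiltered under inclusion: it is nonempty since $\Spc(\T^\omega)$ is quasi-compact, and it is closed under finite intersections since the quasi-compact opens of a spectral space form a basis closed under finite intersections, and any two such neighbourhoods of $\P$ intersect in one. Hence the colimit appearing in the statement is a filtered colimit of the objects $\f_{U^c}$ (with transition maps the canonical localization maps). Next I would identify the target: by \cref{rec-tensoring-idempotents}(c), $\f_{U_1^c}\otimes\f_{U_2^c}\simeq \f_{U_1^c\cup U_2^c}$, and more generally one expects that a (filtered, hence homotopy) colimit of the $\f_{U^c}$ computes the idempotent associated to the Thomason subset $\bigcup U^c = \gen(\P)^c$, which is precisely $\f_{\gen(\P)^c} = \f_\P$. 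The content is thus: a filtered colimit of smashing idempotents realises the smashing idempotent of the union of the associated Thomason subsets. This is exactly the content of the cited results \cite[Lemma 6.6]{Ste13}, \cite[Lemma 2.3.6]{Ste11}, \cite[Corollary 3.10]{balchin2024profinite}, so the proof is largely an exercise in unwinding that these apply in the present generality — namely that $\T$ is a rigidly-compactly generated (stable $\infty$-) tt-category and the cover in question is by quasi-compact opens.

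Concretely, I would argue as follows. The colimit $E\coloneqq \colim_{\P\in U\in\mathcal{QO}(\T)}\f_{U^c}$ exists in $\T$ (it is a stable $\infty$-category with all colimits). For each such $U$ there is a localization triangle $\e_{U^c}\to\mathbb 1\to\f_{U^c}$, and taking the filtered colimit over $U$ (colimits are exact) yields a triangle $\colim\e_{U^c}\to\mathbb 1\to E$. It then suffices to show that $E$, as a $\otimes$-idempotent, corresponds to the smashing ideal $\T_{\gen(\P)^c}=\loc(\P)$; equivalently, that $\colim\e_{U^c}\simeq\e_\P$. For this I would check the defining properties of the idempotent triangle associated to $\gen(\P)^c$: that $E\in\loc(\P)^\perp$ and $\colim\e_{U^c}\in\loc(\P)$. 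For the first, $\Hom_\T(c, E)\simeq\colim\Hom_\T(c,\f_{U^c})$ for $c$ compact (compactness), and each $\f_{U^c}$ lies in $(\T_{U^c})^\perp = \loc(\T^\omega_{U^c})^\perp$; since $\loc(\P) = \loc\bigl(\bigcup_U\T^\omega_{U^c}\bigr)$ by Balmer's classification applied to $\gen(\P)^c = \bigcup U^c$, testing against the generators shows $E\in\loc(\P)^\perp$. For the second, each $\e_{U^c}\in\T_{U^c}\subseteq\T_{\gen(\P)^c}=\loc(\P)$, and a filtered colimit of objects of a localizing subcategory stays in it. By uniqueness of the idempotent triangle, $E\simeq\f_\P$ and the comparison map is the equivalence claimed.

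\textbf{Main obstacle.} The step I expect to require the most care is verifying that the \emph{canonical} map produced by the universal property of the colimit is the one realising this equivalence — i.e.\ tracking coherence of the transition maps $\f_{U'^c}\to\f_{U^c}$ and of the augmentations $\f_{U^c}\to\f_\P$ (each induced by the inclusion $\gen(\P)^c\supseteq U^c$, cf.\ \cref{rem:restriction functors}) so that they assemble into a map \emph{out of} the colimit rather than merely an abstract equivalence of objects. In the $\infty$-categorical setting this is where one genuinely needs that $\T$ is presentable/stable and that finite localizations are functorial in the Thomason subset; this is precisely why the cited references are invoked, and the cleanest route is to quote \cite[Lemma 6.6]{Ste13} (or its reformulation in \cite[Corollary 3.10]{balchin2024profinite}) directly, checking only that its hypotheses are met here.
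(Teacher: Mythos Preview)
Your proposal is correct and matches the paper's treatment: the paper does not give an independent proof of this lemma but records it as a reformulation of \cite[Lemma~6.6]{Ste13}, \cite[Lemma~2.3.6]{Ste11}, and \cite[Corollary~3.10]{balchin2024profinite}, exactly as you suggest in your final paragraph. Your sketch of the direct argument (checking that $\colim\e_{U^c}\in\loc(\P)$ and $\colim\f_{U^c}\in\loc(\P)^\perp$, then invoking uniqueness of the idempotent triangle) is a faithful unpacking of what those references contain, so there is nothing to add.
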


With the necessary preliminaries in place, we are now in a position to prove the announced theorem.

\begin{proof}[Proof of  \cref{thm-gp-stalks}]
    Let $\{U_i\}_{i\in I}$ be a finite cover of the triangular spectrum $\mathrm{Spc}(\T^\omega)$ by quasi-compacts open subsets. By  \cref{lem-purity-covers}, we get that there is $i\in I $ such that $x\simeq  \f_{U^c_i}\otimes x$. Now, recall that $\Spc(\T(U_i))\cong U_i$. Hence, replacing $x$ by $\iota_{U^c_i}^\ast x$, and $\T$ by $\T(U_i)$ we can iterate the same argument; namely, we can cover $U_i$ by quasi-compact open subsets $\{U_{i,j} \}_{j\in I_i}$ of $U_i$, and apply  \cref{lem-purity-covers} to obtain $j\in I_i$ such that 
    \[
\f_{U^c_{i,j}}\otimes \iota_{U^c_i}^\ast x\simeq  \iota_{U^c_i}^\ast x.
    \]
    Repeating   the same process, we obtain a nested collection $\{U_\alpha\}_{\alpha\in \mathcal{I}}$ of non-empty quasi-compact opens of $\Spc(\T^\omega)$. By  \cref{lem-intersections}, there exists $\P\in \bigcap_{\alpha\in \mathcal{I}}U_\alpha$. We claim that $x\simeq \f_\P\otimes x$.  Indeed, 
    note that 
    \[
    \f_\P\simeq\underset{\P\in U\in \mathcal{QO}(\T)}{\colim}  \f_{U^c}\simeq \underset{\alpha\in \mathcal{I}}{\colim}  \f_{U_\alpha^c}
    \]
   where the first equivalence is \cref{stalks as a colimit}, and the second follows from the fact that the collection $\{U_\alpha\}_{\alpha \in \mathcal{I}}$ is final in the diagram obtained from $\{ U \in \mathcal{QO}(\T) \mid \P \in U \}$. It follows that
    \[
    \f_{\mathrm{gen}(\P)}\otimes x\simeq \colim_{\alpha\in \mathcal{I}}(\f_{U_\alpha^c})\otimes x\simeq \colim_{\alpha\in \mathcal{I}}(\f_{U_\alpha^c}\otimes x).
    \]
     It remains to show that the canonical map 
    \[
    \colim_{\alpha\in \mathcal{I}}(\f_{U_\alpha^c}\otimes x)\to x
    \]
    is an equivalence. This follows from the construction of the $U_\alpha$, which tells us that the diagram $\alpha\mapsto  (\f_{U_\alpha^c})\otimes x$ is constant with value $x$. Therefore, the colimit must be equivalent to $x$.
\end{proof}

We now present a series of straightforward consequences of the previous result. But first let us recall some terminology and set some notation.

\begin{recollection}
Recall that $\pinj(\T)$ denotes the set of isomorphism classes of indecomposable pure-injective objects in $\T$. The \textit{Ziegler spectrum} of $\T$ is the topological space whose underlying set is $\pinj(\T)$ and whose closed subsets are those of the form $\mathcal{D} \cap \pinj(\T)$, where $\mathcal{D}$ is a definable subcategory of $\T$. This space is denoted by $\mathrm{Zg}(\T)$. We refer to \cite{Kra00} for further details. 
\end{recollection}

\begin{definition}
Let $\gpinj(\T)$ denote the set of isomorphism classes of indecomposable g-pure-injective objects in $\T$. The \textit{geometric subspace}  $\mathrm{GZg}(\T)$ of $\mathrm{Zg}(\T)$ is by definition $\gpinj(\T)$ with the subspace topology. 
\end{definition}

\begin{lemma}\label{prop-closed-emb}
 Let $\S$ be a smashing ideal of $\T$. Then   the functor $\iota_{\S,\ast}\colon \T/\S\to \T$ induces  a  closed embedding
         \[
         \mathrm{Zg}(\T/\S)\to \mathrm{Zg}(\T)
         \]
    between Ziegler spectra. 
\end{lemma}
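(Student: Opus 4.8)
The statement asserts that for a smashing ideal $\S$ of $\T$, pushforward along $\iota_{\S,\ast}\colon \T/\S \to \T$ induces a closed embedding $\mathrm{Zg}(\T/\S)\to\mathrm{Zg}(\T)$. The plan is to proceed in three steps: first establish that the map on underlying sets is a well-defined injection, then show it is a topological embedding onto its image, and finally identify that image as a closed subset. For the first step, recall from \cref{iota preserves definables}(a) that $\iota_{\S,\ast}$ preserves pure-injective objects; since $\iota_{\S,\ast}$ is fully faithful it preserves and reflects indecomposability, so it sends $\pinj(\T/\S)$ injectively into $\pinj(\T)$. Injectivity on isomorphism classes follows from full faithfulness together with the reflection statement of \cref{iota preserves definables}(b).

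\textbf{Topology and image.} For the embedding and closedness, the key is \cref{iota preserves definables}(c): $\iota_{\S,\ast}$ preserves and reflects definable subcategories. More precisely, if $\mathcal{D}$ is a definable subcategory of $\T/\S$, then (using that definable subcategories are cut out by coherent functors, and that $\iota_{\S,\ast}$ interacts well with coherent functors via the commuting square in the proof of \cref{prop-phantom-gphantom}) there is a definable subcategory $\widetilde{\mathcal{D}}$ of $\T$ with $\iota_{\S,\ast}^{-1}(\widetilde{\mathcal D}\cap\pinj(\T)) = \mathcal D\cap\pinj(\T/\S)$, and conversely every definable subcategory of $\T$ restricts along $\iota_{\S}^\ast$ to one of $\T/\S$. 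This gives a bijection between closed subsets of $\mathrm{Zg}(\T/\S)$ and closed subsets of $\mathrm{Zg}(\T)$ meeting the image, which is exactly the statement that the map is a topological embedding. To see the image is closed, observe that the essential image of $\iota_{\S,\ast}$ is $\S^\perp$, and intersecting with the pure-injectives, this equals $\mathcal{C}\cap\pinj(\T)$ where $\mathcal{C}$ is the class of objects $z$ with $\Gamma_\S z \simeq 0$, equivalently $\Gamma_\S(\mathbb{1})\otimes z\simeq 0$ by \cref{rec-smashing-ideals}(b); this vanishing condition is cut out by the coherent functors $\Hom_\T(c, \Gamma_\S(\mathbb 1)\otimes -)$ as $c$ ranges over $\T^\omega$, exhibiting $\S^\perp$ as a definable subcategory of $\T$. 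Hence $\mathrm{Im}$ is a closed subset of $\mathrm{Zg}(\T)$.

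\textbf{Main obstacle.} I expect the delicate point to be verifying cleanly that the subspace topology on the image agrees with the Ziegler topology transported from $\mathrm{Zg}(\T/\S)$ — that is, that \emph{every} closed subset of $\mathrm{Zg}(\T/\S)$ arises by restricting a closed subset of $\mathrm{Zg}(\T)$, and not merely that restriction of definables goes the right way. This is where one must use both directions of \cref{iota preserves definables}(c) carefully: a definable subcategory $\mathcal D \subseteq \T/\S$ is sent to a definable $\iota_{\S,\ast}\mathcal D$ whose intersection with $\pinj(\T)$ pulls back correctly, using full faithfulness to match Hom-sets and the fact that objects of $\T/\S$ are identified with objects of $\S^\perp \subseteq \T$. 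Once this correspondence is set up, closedness of the image combined with the bijection on closed sets yields the closed embedding. The remaining verifications — that $\Gamma_\S(\mathbb 1)\otimes -$ is a coherent-functor-definable condition, and that indecomposability transfers — are routine given the machinery already assembled in the excerpt.
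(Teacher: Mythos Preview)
Your outline matches the paper's proof, which simply says the result is immediate from \cref{iota preserves definables}; you have correctly unpacked this into injectivity on points, matching of topologies via parts (a)--(c), and closedness of the image. However, your explicit argument for the last step contains an error: the functors $\Hom_\T(c,\Gamma_\S(\mathbb 1)\otimes -)$ are \emph{not} coherent in general. Coherent functors always preserve products (they are cokernels in $\mathrm{Ab}$ of maps between product-preserving functors), whereas $\Gamma_\S(\mathbb 1)\otimes -$ typically does not. Concretely, take $\T=\mathbf D(\mathbb Z)$ and $\S=\loc(\mathbb Z/p)$: each $\mathbb Z/p^i$ lies in $\S$, so $\Gamma_\S(\mathbb Z/p^i)\simeq\mathbb Z/p^i$, but $\prod_i\mathbb Z/p^i$ does not lie in $\S$ (the element $(1,1,\ldots)$ is not $p$-power torsion), hence $\Gamma_\S\bigl(\prod_i\mathbb Z/p^i\bigr)\not\simeq\prod_i\Gamma_\S(\mathbb Z/p^i)$ and some $\Hom(c,\Gamma_\S(\mathbb 1)\otimes -)$ fails to preserve products.

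The fix is already implicit in your setup and requires no new ingredients. Once you know that $\iota_{\S,\ast}$ sends definable subcategories of $\T/\S$ to definable subcategories of $\T$ (part (c) of \cref{iota preserves definables}), apply this to the whole category $\T/\S$, which is trivially definable in itself; its essential image $\S^\perp$ is then definable in $\T$, so the image of the Ziegler map is closed. Equivalently: the map is closed (it sends closed sets to closed sets), and closedness of the image is just the special case of the full space, so no separate coherent-functor description of $\S^\perp$ is needed.
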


\begin{proof}
    This is immediate from   \cref{iota preserves definables}. 
\end{proof}

\begin{corollary}\label{coro-surjection-gzg-stalks}
    Let $\T$ be a rigidly-compactly generated tt-category. Then the following properties hold. 
    \begin{enumerate}
        \item The class of g-pure-injective objects and pure-objects if coincide if and only if the induced map 
    \[
   \coprod\mathrm{Zg}(\iota_{\P,\ast}) \colon \coprod_{\P\in \mathrm{Spc}(\T^\omega)}\mathrm{Zg}(\T_\P)\to \mathrm{Zg}(\T)
    \]
    is surjective. 
    \item The map 
    \[
   \coprod\mathrm{Zg}(\iota_{\P,\ast}) \colon \coprod_{\P\in \mathrm{Spc}(\T^\omega)}\mathrm{Zg}(\T_\P)\to \mathrm{GZg}(\T)
    \]
    is a quotient.
    \item Let $\{U_i\}_{i\in I}$ be a finite cover of $\Spc(\T^\omega)$ by quasi-compact open subsets. Then the induced map 
\[
\coprod \mathrm{GZg}(\iota_{U^c_i,\ast})\colon \coprod_{i\in I} \mathrm{GZg}(U_i) \to \mathrm{GZg}(\T)
\]
is a quotient.  
    \end{enumerate}
     Here the coproducts are taken in the category of topological spaces. 
\end{corollary}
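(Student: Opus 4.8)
The plan is to prove each part by combining the reduction to stalks (\cref{thm-gp-stalks}), the fact that the pushforwards $\iota_{\P,\ast}$ preserve pure-injectivity and induce closed embeddings on Ziegler spectra (\cref{prop-iota*-gpure-pure}, \cref{iota preserves definables}, \cref{prop-closed-emb}), and a standard argument that a continuous surjection onto a space whose closed sets are pulled back along the maps in a coproduct is a quotient map.

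For part (a): The class of pure-injectives and g-pure-injectives coincide on indecomposables precisely when every indecomposable pure-injective $x \in \T$ is also g-pure-injective (one inclusion is always \cref{cor-gpinj is pinj}). First I would observe that each map $\mathrm{Zg}(\iota_{\P,\ast})$ lands in $\mathrm{GZg}(\T)$: indeed $\iota_{\P,\ast}$ of a pure-injective is g-pure-injective by \cref{prop-iota*-gpure-pure}, and it preserves indecomposability (it is fully faithful, so it reflects and preserves the endomorphism ring being local). Hence the image of $\coprod \mathrm{Zg}(\iota_{\P,\ast})$ is always contained in $\mathrm{GZg}(\T) \subseteq \mathrm{Zg}(\T)$. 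By \cref{thm-gp-stalks}, every indecomposable g-pure-injective of $\T$ is of the form $\iota_{\P,\ast}(y)$ for some $\P$ and some pure-injective $y \in \T_\P$; decomposing $y$ into indecomposables and using that $\iota_{\P,\ast}$ preserves products/coproducts together with indecomposability of $x$, we may take $y$ indecomposable. Thus the map $\coprod\mathrm{Zg}(\iota_{\P,\ast})$ always surjects onto $\mathrm{GZg}(\T)$. Therefore it surjects onto all of $\mathrm{Zg}(\T)$ if and only if $\mathrm{GZg}(\T) = \mathrm{Zg}(\T)$, i.e. if and only if the two classes of indecomposable pure-injectives coincide; and by a standard argument (a pure-injective is a product of indecomposables, and a definable/Ziegler-theoretic statement is detected on indecomposables) this is equivalent to the equality of the full classes.

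For part (b): We already know from the discussion above that $\coprod\mathrm{Zg}(\iota_{\P,\ast})$ is a continuous surjection onto $\mathrm{GZg}(\T)$. To upgrade this to a topological quotient I would show that a subset $C \subseteq \mathrm{GZg}(\T)$ is closed whenever its preimage in each $\mathrm{Zg}(\T_\P)$ is closed. Since $\mathrm{GZg}(\T)$ carries the subspace topology from $\mathrm{Zg}(\T)$, it suffices to produce a definable subcategory $\D$ of $\T$ with $\D \cap \gpinj(\T) = C$. The preimage of $C$ in $\mathrm{Zg}(\T_\P)$ is, by hypothesis, closed, hence of the form $\D_\P \cap \pinj(\T_\P)$ for a definable subcategory $\D_\P \subseteq \T_\P$. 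Pulling back along $\iota_\P^\ast$ — which preserves and reflects definable subcategories by \cref{iota preserves definables}(c) — or rather pushing the definable condition forward and intersecting over all $\P$, one obtains the definable subcategory $\D := \{ z \in \T \mid \iota_\P^\ast(z) \in \D_\P \text{ for all } \P \}$ of $\T$ cut out by the union of the relevant families of coherent functors precomposed with $\iota_\P^\ast$. One then checks $\D \cap \gpinj(\T) = C$ using the surjectivity and the fact that $x \simeq \iota_{\P,\ast} y$ lies in $\D$ iff $y = \iota_\P^\ast x$ lies in $\D_\P$. This gives the quotient statement. (Here one must be slightly careful that $\D$, defined via an arbitrary set of $\P$'s, is still genuinely definable — this uses that coherent functors are closed under arbitrary families in the definition of definable subcategory.)

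For part (c): The argument is formally identical but with the cover $\{U_i\}$ in place of the $\{\T_\P\}$, using \cref{prop-iota-gp-gp} and \cref{prop-iotaU-gp-gp} (which say $\iota_{U_i^c}^\ast$ sends g-phantoms to g-phantoms and $\iota_{U_i^c,\ast}$ preserves g-pure-injectivity), \cref{lem-purity-covers}(b) (every indecomposable g-pure-injective of $\T$ lies in some $\T(U_i)$, so the map is surjective), and the fact that $\iota_{U_i^c,\ast}$ reflects and preserves definable subcategories — which follows from \cref{iota preserves definables} applied to the smashing ideal $\T_{U_i^c}$. The closedness-detection step then goes through verbatim: a subset of $\mathrm{GZg}(\T)$ whose preimage in each $\mathrm{GZg}(U_i)$ is closed is cut out by a definable tt-condition, hence closed. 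The main obstacle in all three parts is bookkeeping around the finiteness/arbitrariness of the index set and making sure that the "intersection over all $\P$" of definable conditions is still a legitimate definable subcategory of $\T$; once that is granted, everything reduces to the general topology fact that a continuous surjection is a quotient as soon as it detects closed sets, combined with the stalk-reduction theorem.
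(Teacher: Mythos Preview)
Your argument for part (a) is correct and essentially matches the paper's.

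For parts (b) and (c), however, there is a genuine gap. You define $\D := \{z \in \T \mid \iota_\P^\ast z \in \D_\P \text{ for all } \P\}$ and then assert that $x \simeq \iota_{\P,\ast} y$ lies in $\D$ if and only if $y = \iota_\P^\ast x$ lies in $\D_\P$. This biconditional is false: membership in your $\D$ demands $\iota_\Q^\ast x \in \D_\Q$ for \emph{every} prime $\Q$, whereas knowing $x \in C$ only tells you that $x$ is a pushforward from some single prime $\P$ and hence that $\iota_\P^\ast x \in \D_\P$ for that particular $\P$. For $\Q \neq \P$ there is no reason whatsoever for $\iota_\Q^\ast x$ to lie in $\D_\Q$, since the definable subcategories $\D_\Q$ were chosen independently at each prime with no compatibility condition between them. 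Consequently you cannot conclude $C \subseteq \D$, and the verification of $\D \cap \gpinj(\T) = C$ collapses. (Replacing the intersection by a join would trade this difficulty for the opposite inclusion.) The concern you do flag --- whether an intersection over all $\P$ of definable conditions is still definable --- is not the real obstacle; the construction is definable, it just does not cut out $C$.

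The paper avoids this entirely by a much shorter route: it invokes \cref{prop-closed-emb}, which says that each $\iota_{\P,\ast}$ (and each $\iota_{U_i^c,\ast}$) induces a \emph{closed} embedding of Ziegler spectra. Combined with surjectivity from \cref{thm-gp-stalks}, this yields the quotient statement directly --- in part (c) transparently so, since the index set $I$ is finite and hence the coproduct map is itself closed (the image of a closed set being a finite union of closed sets). No explicit construction of a definable subcategory is needed; the closedness of the component maps does all the work.
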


\begin{proof}
    Note that the image of $\mathrm{Zg}(\iota_{\P,\ast})$ actually lands in the  geometric subspace  $\mathrm{GZg}(\T)$ by  \cref{prop-iota*-gpure-pure}. Hence part $(a)$ follows by  \cref{thm-gp-stalks}. 

     For part $(b)$, again  by  \cref{thm-gp-stalks} we know that the map is surjective. The rest follows since each of the components is a closed map by  \cref{prop-closed-emb}. 

     For part $(c)$, note that the maps $\mathrm{GZg}(\iota_{U^c_i,\ast})$ are well defined by  \cref{prop-iotaU-gp-gp} and are closed by  \cref{prop-closed-emb}. Hence it remains to show that $\coprod \mathrm{GZg}(\iota_{U_i,\ast})$ is surjective. We have the following diagram. \begin{center}
        \begin{tikzcd}
         \coprod_{i\in I} \coprod_{\P\in U_i} \mathrm{Zg}(\T_\P)    \arrow[r] \arrow[rr, bend left=20, "\coprod\coprod \mathrm{Zg}(\iota_{\P,\ast})"]  & \coprod_{i\in I} \mathrm{GZg}(U_i) \arrow[r] & \mathrm{GZg}(\T).
        \end{tikzcd}
    \end{center}  
    Note that the top arrow is surjective since it contains at least the map from part $(a)$.  Hence the result follows. 
\end{proof}

We obtain the following result as an immediate consequence. 

\begin{corollary}\label{coro-gzg-closed}
    Let $\{U_i\}_{i\in I}$ be a finite cover of $\Spc(\T^\omega)$ by quasi-compact open subsets. Assume that each $\mathrm{GZg}(\T(U_i))$ is a closed subset of the Ziegler spectrum $\mathrm{Zg}(\T(U_i))$. Then $\mathrm{GZg}(\T)$ is a closed subset of $\mathrm{Zg}(\T)$. 
\end{corollary}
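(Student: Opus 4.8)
The statement to prove is \cref{coro-gzg-closed}: given a finite cover $\{U_i\}_{i\in I}$ of $\Spc(\T^\omega)$ by quasi-compact opens with each $\mathrm{GZg}(\T(U_i))$ closed in $\mathrm{Zg}(\T(U_i))$, we want $\mathrm{GZg}(\T)$ closed in $\mathrm{Zg}(\T)$. The plan is to combine two facts already in hand: (i) by \cref{prop-closed-emb}, for each $i$ the functor $\iota_{U_i^c,\ast}$ induces a \emph{closed embedding} $\mathrm{Zg}(\T(U_i))\to \mathrm{Zg}(\T)$; and (ii) by \cref{coro-surjection-gzg-stalks}(c), the induced map $\coprod_{i\in I}\mathrm{GZg}(\T(U_i))\to \mathrm{GZg}(\T)$ is a (surjective) quotient, in particular $\mathrm{GZg}(\T)=\bigcup_{i\in I}\mathrm{im}\,\mathrm{GZg}(\iota_{U_i^c,\ast})$.

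First I would observe that, since each $\mathrm{GZg}(\T(U_i))$ is by hypothesis a closed subset of $\mathrm{Zg}(\T(U_i))$, and $\mathrm{Zg}(\iota_{U_i^c,\ast})\colon \mathrm{Zg}(\T(U_i))\to \mathrm{Zg}(\T)$ is a closed map (being a closed embedding), the image
\[
\mathrm{im}\,\mathrm{GZg}(\iota_{U_i^c,\ast}) = \mathrm{Zg}(\iota_{U_i^c,\ast})\bigl(\mathrm{GZg}(\T(U_i))\bigr)
\]
is closed in $\mathrm{Zg}(\T)$. (Here one uses that the restriction of the closed embedding $\mathrm{Zg}(\T(U_i))\hookrightarrow \mathrm{Zg}(\T)$ to the closed subspace $\mathrm{GZg}(\T(U_i))$ is again a closed embedding, so its image is closed in $\mathrm{Zg}(\T)$; concretely, a closed subset of a closed subspace of $\mathrm{Zg}(\T(U_i))$ maps to a closed subset of $\mathrm{Zg}(\T)$.) Then I would note that $I$ is finite, so $\bigcup_{i\in I}\mathrm{im}\,\mathrm{GZg}(\iota_{U_i^c,\ast})$ is a finite union of closed subsets, hence closed in $\mathrm{Zg}(\T)$.

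Finally I would identify this finite union with $\mathrm{GZg}(\T)$. The inclusion $\subseteq$ is clear since each $\iota_{U_i^c,\ast}$ sends g-pure-injectives to g-pure-injectives (\cref{prop-iotaU-gp-gp}), so every point in the union is a (class of an indecomposable) g-pure-injective of $\T$. The reverse inclusion $\supseteq$ is exactly surjectivity of $\coprod \mathrm{GZg}(\iota_{U_i^c,\ast})$, which is part of the content of \cref{coro-surjection-gzg-stalks}(c): every indecomposable g-pure-injective of $\T$ lies in the image of some $\mathrm{GZg}(\iota_{U_i^c,\ast})$. Therefore $\mathrm{GZg}(\T)=\bigcup_{i\in I}\mathrm{im}\,\mathrm{GZg}(\iota_{U_i^c,\ast})$ is closed.

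The only mildly delicate point — and the one I would be most careful about — is the bookkeeping of topologies in the first step: one must check that $\mathrm{GZg}(\T(U_i))$ being closed in $\mathrm{Zg}(\T(U_i))$, together with $\mathrm{Zg}(\T(U_i))$ sitting as a \emph{closed} subspace of $\mathrm{Zg}(\T)$, genuinely forces the image of $\mathrm{GZg}(\T(U_i))$ to be closed in $\mathrm{Zg}(\T)$ (rather than merely closed in $\mathrm{im}\,\mathrm{Zg}(\iota_{U_i^c,\ast})$). This is the standard transitivity of closedness, but it is where the hypothesis "$\mathrm{GZg}(\T(U_i))$ closed in $\mathrm{Zg}(\T(U_i))$" (as opposed to being merely a subspace) is used, so I would spell it out. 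Everything else is formal.
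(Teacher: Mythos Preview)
Your proposal is correct and follows essentially the same approach as the paper's proof: use \cref{coro-surjection-gzg-stalks}(c) to write $\mathrm{GZg}(\T)$ as the finite union of the images $\mathrm{Im}(\mathrm{GZg}(\iota_{U_i^c,\ast}))$, and then use \cref{prop-closed-emb} (closedness of each $\mathrm{Zg}(\iota_{U_i^c,\ast})$) together with the hypothesis to conclude each image is closed in $\mathrm{Zg}(\T)$. You have simply been more explicit than the paper about the transitivity-of-closedness step and the role of finiteness of $I$.
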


\begin{proof}
   By  \cref{coro-surjection-gzg-stalks}$(c)$ we deduce that 
   \[
   \mathrm{GZg}(\T) =\bigcup_{i\in I} \mathrm{Im}(\mathrm{GZg}(\iota_{U_i,\ast})).
   \] 
   Since each map $\mathrm{Zg}(\iota_{U_i,\ast})\colon \mathrm{Zg}(\T(U_i))\to \mathrm{Zg}(\T)$ is  closed, we obtain the result.  
   \end{proof}

In particular, the previous result applies for qcqs schemes: 

\begin{corollary}\label{coro-zgz-scheme}
    Let $X$ be a quasi-compact quasi-separated scheme. Then the set $\mathrm{GZg}(\mathbf{D}_{\textrm{qc}}(X))$ is  a closed subset of $\mathrm{Zg}(\mathbf{D}_{\textrm{qc}}(X))$.
\end{corollary}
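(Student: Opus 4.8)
The plan is to deduce \cref{coro-zgz-scheme} directly from \cref{coro-gzg-closed} by exhibiting, for a quasi-compact quasi-separated scheme $X$, a finite cover of $\Spc(\mathbf{D}_{\mathrm{qc}}(X)^\omega)$ by quasi-compact opens $\{U_i\}_{i\in I}$ such that each $\mathrm{GZg}(\mathbf{D}_{\mathrm{qc}}(X)(U_i))$ is a closed subset of $\mathrm{Zg}(\mathbf{D}_{\mathrm{qc}}(X)(U_i))$. The natural choice is a finite affine open cover $X = \bigcup_{i} \Spec(R_i)$, which exists precisely because $X$ is quasi-compact and quasi-separated; by Thomason's classification (recalled via \cref{rec-Balmer-class} and the Balmer spectrum being homeomorphic to the underlying space of $X$), each $\Spec(R_i)$ corresponds to a quasi-compact open subset $U_i$ of $\Spc(\mathbf{D}_{\mathrm{qc}}(X)^\omega)$, and the finite localization $\mathbf{D}_{\mathrm{qc}}(X)(U_i)$ is equivalent as a tt-category to $\mathbf{D}(R_i)$ (this is the standard identification of the finite localization supported on an affine open with the derived category of the corresponding ring; it follows from \cref{rec-category-thomason} together with the fact that $\mathbf{D}_{\mathrm{qc}}$ of an affine scheme is $\mathbf{D}$ of the ring).

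First I would recall the affine open cover and invoke the homeomorphism $\Spc(\mathbf{D}_{\mathrm{qc}}(X)^\omega) \cong X$ (as topological spaces) to produce the $U_i$. Second, I would identify $\mathbf{D}_{\mathrm{qc}}(X)(U_i) \simeq \mathbf{D}(R_i)$ as rigidly-compactly generated tt-categories. Third — and this is where the real content lies — I would apply \cref{pure=gpure affine}: for the commutative ring $R_i$, the g-pure-injective objects in $\mathbf{D}(R_i)$ coincide with the pure-injective objects. This says exactly that $\gpinj(\mathbf{D}(R_i)) = \pinj(\mathbf{D}(R_i))$ as sets, so $\mathrm{GZg}(\mathbf{D}(R_i))$ is the whole space $\mathrm{Zg}(\mathbf{D}(R_i))$, which is trivially closed in itself. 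Fourth, I would feed this into \cref{coro-gzg-closed} to conclude that $\mathrm{GZg}(\mathbf{D}_{\mathrm{qc}}(X))$ is closed in $\mathrm{Zg}(\mathbf{D}_{\mathrm{qc}}(X))$.

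The main obstacle I anticipate is not any of the individual steps but rather making the identification $\mathbf{D}_{\mathrm{qc}}(X)(U_i) \simeq \mathbf{D}(R_i)$ precise and compatible with the tt-structure and the notion of purity, so that \cref{pure=gpure affine} genuinely applies to $\mathbf{D}_{\mathrm{qc}}(X)(U_i)$. Concretely: the finite localization $\mathbf{D}_{\mathrm{qc}}(X)(U_i)$ is defined as a Verdier quotient of $\mathbf{D}_{\mathrm{qc}}(X)$, and one must check that the restriction-to-$U_i$ functor induces an equivalence with $\mathbf{D}_{\mathrm{qc}}(U_i) = \mathbf{D}(\Gamma(U_i,\mathcal{O}_X)) = \mathbf{D}(R_i)$; this is a known fact but worth citing carefully (it ultimately rests on the compatibility of the Balmer spectrum with open immersions and the Neeman/Thomason machinery already invoked in \cref{rec-category-thomason}). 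Once that identification is in place, everything else is formal: purity is an intrinsic notion on the tt-category, so g-purity computed inside $\mathbf{D}_{\mathrm{qc}}(X)(U_i)$ agrees with g-purity inside $\mathbf{D}(R_i)$, and \cref{pure=gpure affine} finishes the verification of the hypothesis of \cref{coro-gzg-closed}.

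\begin{proof}
By \cref{coro-gzg-closed}, it suffices to exhibit a finite cover $\{U_i\}_{i\in I}$ of $\Spc(\mathbf{D}_{\mathrm{qc}}(X)^\omega)$ by quasi-compact open subsets such that each $\mathrm{GZg}(\mathbf{D}_{\mathrm{qc}}(X)(U_i))$ is closed in $\mathrm{Zg}(\mathbf{D}_{\mathrm{qc}}(X)(U_i))$. Since $X$ is quasi-compact and quasi-separated, we may choose a finite affine open cover $X = \bigcup_{i\in I} V_i$ with $V_i \cong \Spec(R_i)$. Under the homeomorphism $\Spc(\mathbf{D}_{\mathrm{qc}}(X)^\omega) \cong X$, each $V_i$ corresponds to a quasi-compact open subset $U_i$, and by \cref{rec-category-thomason} the finite localization $\mathbf{D}_{\mathrm{qc}}(X)(U_i)$ is a rigidly-compactly generated tt-category with $\Spc(\mathbf{D}_{\mathrm{qc}}(X)(U_i)^\omega) \cong U_i \cong \Spec(R_i)$; in fact the localization functor identifies $\mathbf{D}_{\mathrm{qc}}(X)(U_i)$ with $\mathbf{D}_{\mathrm{qc}}(V_i) \simeq \mathbf{D}(R_i)$ as tt-categories. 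By \cref{pure=gpure affine}, the pure-injective and the g-pure-injective objects of $\mathbf{D}(R_i)$ coincide, so $\gpinj(\mathbf{D}_{\mathrm{qc}}(X)(U_i)) = \pinj(\mathbf{D}_{\mathrm{qc}}(X)(U_i))$ and hence $\mathrm{GZg}(\mathbf{D}_{\mathrm{qc}}(X)(U_i)) = \mathrm{Zg}(\mathbf{D}_{\mathrm{qc}}(X)(U_i))$, which is certainly a closed subset of itself. Applying \cref{coro-gzg-closed} yields the claim.
\end{proof}
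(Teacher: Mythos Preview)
Your proof is correct and follows essentially the same approach as the paper: take a finite affine open cover, use \cref{pure=gpure affine} to see that $\gpinj(\mathbf{D}_{\mathrm{qc}}(U_i))=\pinj(\mathbf{D}_{\mathrm{qc}}(U_i))$ on each affine piece, and then invoke \cref{coro-gzg-closed}. The only difference is that you spell out the identification $\mathbf{D}_{\mathrm{qc}}(X)(U_i)\simeq \mathbf{D}(R_i)$ explicitly, whereas the paper leaves this implicit.
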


\begin{proof}
    Consider an affine open cover $\{U_i\}_{i\in I}$ of $X$. By  \cref{pure=gpure affine}, we have that 
    \[
    \gpinj(\mathbf{D}_{\textrm{qc}}(U_i))=\pinj(\mathbf{D}_{\textrm{qc}}(U_i)).
    \]
    Hence the conclusion follows from  \cref{coro-gzg-closed}.
\end{proof}

\section{Geometric purity in the derived category of the projective line}\label{g-pure vs g-pure á la PS}
In this section, we show that geometric purity can differ from purity in rigidly-compactly generated tt-categories. To this end, we compare geometric purity in the derived category $\mathbf{D}_{\textrm{qc}}(X)$ of a quasi-compact, quasi-separated scheme $X$ with the notion of geometric purity in the category of quasi-coherent sheaves $\mathrm{QCoh}(X)$ as defined by Prest and Slavík. The main case covered in this section is when $X = \mathbb{P}^1 \coloneqq \mathbb{P}^1_k$, where $k$ denotes an arbitrary field.

For convenience, we recall the relevant notions and terminology from \cite{SP18}.

\begin{definition}
    Let $X$ be a quasi-compact quasi-separated scheme with structure sheaf $\mathcal{O}_X$. A short exact sequence of $\mathcal{O}_X$--modules 
    \[
    0 \to F\to G\to H\to 0
    \]
    is \textit{geometrically-pure} (or just \textit{g-pure}) if it remains exact after tensoring with any $\mathcal{O}_X$--module $L$. Equivalently, if the exact sequence 
    \[
    0\to F_x\to G_x\to H_x\to 0
    \]
    is a pure exact sequence of $\mathcal{O}_{X,x}$--modules, for all $x\in X$. With this notion in place, we can now define \textit{g-pure monomorphisms}, \textit{g-epimorphisms}, and \textit{g-phantom maps}, as well as \textit{g-pure-injectives}, in a manner analogous to the corresponding notions in a locally finitely presented category.  
\end{definition}

 Let $\mathrm{QCoh}(X)$ denote the category of quasi-coherent sheaves over $X$. Recall that $\mathrm{QCoh}(X)$ is a full subcategory of $\mathcal{O}_X\textrm{-}\mathrm{Mod}$. The notion of geometric purity in the category $\mathrm{QCoh}(X)$ is defined through the one in $\mathcal{O}_X\textrm{-}\mathrm{Mod}$:

\begin{definition}
 A quasi-coherent sheaf $F$ is \textit{g-pure-injective} if $\mathrm{Hom}_{\mathrm{QCoh}(X)}(-,F)$ preserves g-pure exact sequences of quasi-coherent sheaves.    
\end{definition}

From now on, we focus on the projective line $\mathbb{P}^1$ over an arbitrary field, in order to show that geometric purity differs from purity in $\mathbf{D}(\mathbb{P}^1)\coloneqq\mathbf{D}(\mathrm{QCoh}(\mathbb{P}^1))$.

\begin{example}
  Let   $a,b,c,d\in \mathbb{Z}$ such that $a < b < d$, $a < c < d$ and
$a + d = b + c$.    By  \cite[Example 5.1]{SP18}, we know that there is a short exact sequence of quasi-coherent sheaves 
\begin{equation}\label{non g-pure exact sequence}
    0\to \mathcal{O}(a)\to \mathcal{O}(b)\oplus\mathcal{O}(c)\to \mathcal{O}(d)\to 0
\end{equation} which is not split. Moreover, this sequence cannot be pure exact since $\mathcal{O}(d)$ is finitely presented. Nevertheless, this sequence is g-pure exact since passing to  any stalk gives us a short exact sequence of free modules.

Now, consider the short exact sequence \cref{non g-pure exact sequence} as an exact triangle in $\mathbf{D}(\mathbb{P}^1)$. We claim that this triangle is not pure exact. Suppose that it is a pure exact triangle. 
By \cite[Lemma 2.4]{GP05} we know that 
\[
0\to H^n(\mathcal{O}(a))\to H^n( \mathcal{O}(b)\oplus\mathcal{O}(c))\to H^n(\mathcal{O}(d))\to 0
\]
is a pure exact sequence, for all $n\in \mathbb Z$. But we already observed that this sequence for $n=0$ is not pure exact. This completes  the claim.  On the other hand, this exact triangle under localization at a point $x\in \mathbb{P}^1$, gives us the exact triangle induced by the short exact sequence 
\[
0\to \mathcal{O}(a)_x\to \mathcal{O}(b)_x\oplus\mathcal{O}(c)_x\to \mathcal{O}(d)_x\to 0
\]
of free $\mathcal{O}_{X,x}$--modules. Hence we deduce that sequence \cref{non g-pure exact sequence} induces a g-pure exact triangle in $\mathbf{D}(\mathbb{P}^1)$ that is not pure exact. 
\end{example}

We can go further: all indecomposable g-pure-injective objects in $\mathbf{D}(\mathbb{P}^1)$ can be explicitly described. To begin, let us recall the description of pure-injective objects in $\mathrm{QCoh}(\mathbb{P}^1)$.

\begin{recollection}
 For a closed point $x$ in $\mathbb{P}^1$, let 
 \[
 \iota_{x,\ast}\colon \mathrm{Mod}(\mathcal{O}_{\mathbb{P}^1,x})\to \mathrm{QCoh}(\mathbb{P}^1)
 \]
 denote the push forward along the inclusion map of schemes $\mathrm{Spec}(\mathcal{O}_{\mathbb{P}^1,x})\to \mathbb{P}^1$. Let $\mathfrak{m}_x$ denote the maximal ideal of $\mathcal{O}_{\mathbb{P}^1,x}$ and $k(x)$ the residue field $\mathcal{O}_{\mathbb{P}^1,x}/\mathfrak{m}_x$. Then define the sheaves  
\[
\mathcal{E}(x)\coloneqq \iota_{x,\ast}(E(x)),\quad \mbox{and} \quad \mathcal{A}(x)\coloneqq\iota_{x,\ast}(A(x))
\]
where $E(x)$ denotes the injective envelope of the residue field $k(x)$ and $A(x)$ the $\mathfrak{m}_x$--adic completion of $\mathcal{O}_{\mathbb{P}^1,x}$. Finally, recall that the sheaf of rational functions $k(\eta)$ is the residue field of  the generic point $\eta$ of $\mathbb{P}^1$.  
\end{recollection}

\begin{recollection}
    The indecomposable  pure-injective sheaves in $\mathrm{QCoh}(\mathbb{P}^1)$ are described in the following list given in \cite[Proposition 4.4.1]{KS17}: 
\begin{enumerate}
    \item Indecomposable coherent sheaves. 
    \item Pr\"ufer sheaves $\mathcal{E}(x)$, for $x\in \mathbb{P}^1$ closed.
    \item Adic sheaves $\mathcal{A}(x)$, for $x\in \mathbb{P}^1$ closed. 
    \item The sheaf of rational functions $k(\eta)$.
\end{enumerate}
\end{recollection}

\begin{recollection}\label{pure-injectives in D(P)}
    Let $A$ be a hereditary ring. By \cite[Theorem 8.1]{GP05},  the set of isomorphism classes of indecomposable pure injective objects of $\mathbf{D}(A)$ is in bijection with a countable disjoint union of copies of the set of isomorphism classes of indecomposable pure-injective right $A$-modules 
    \[
    \pinj(\mathbf{D}(A)) =\coprod_{n\in \mathbb{Z}}\pinj(\mathrm{Mod}\text{-}A),
    \]
    where the correspondence is given by $x_n[n]\mapsfrom x_n$, for $x_n$ in the $n$th copy of $\pinj(\mathrm{Mod}\text{-}A)$.
\end{recollection}

\begin{remark}
    Recall that there is a derived equivalence 
    \[
    \mathbf{D}(R)\simeq \mathbf{D}(\mathbb{P}^1)
    \]
    where $R$ is the path algebra of the Kronecker quiver $(\cdot \rightrightarrows \cdot )$.   Since $R$ is hereditary, we obtain a description of all the pure-injective objects in $\mathbf{D}(\mathbb{P}^1)$ as above. 
\end{remark}

\begin{remark}\label{rem-o(n)-no-gpure}
    Note that the sheaves $\mathcal{O}(n)$ are pure-injective objects in $\mathrm{QCoh}(\mathbb{P}^1)$ for any $n\in \mathbb{Z}$, but not g-pure-injective by means of the exact sequence from \cref{non g-pure exact sequence}. 
\end{remark}

\begin{proposition}\label{prop-o(n)-notgpure}
    The complex $\mathcal{O}(n)[n]$ is pure-injective in $\mathbf{D}(\mathbb{P}^1)$, but not g-pure-injective. 
\end{proposition}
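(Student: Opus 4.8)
The plan is to reduce the statement to the corresponding fact about $\mathcal{O}(n)$ in $\mathrm{QCoh}(\mathbb{P}^1)$ (recorded in \cref{rem-o(n)-no-gpure}) by comparing the two notions of geometric purity — the derived-categorical one from \cref{def-gpurity} and the Prest--Sl\'avik one for quasi-coherent sheaves. The pure-injectivity of $\mathcal{O}(n)[n]$ in $\mathbf{D}(\mathbb{P}^1)$ is immediate from \cref{pure-injectives in D(P)}, since $\mathcal{O}(n)$ is an indecomposable coherent (hence pure-injective) sheaf on $\mathbb{P}^1$, so it appears in the $n$th copy of $\pinj(\mathrm{Mod}\text{-}R)$ and $\mathcal{O}(n)[n]$ is its image in $\pinj(\mathbf{D}(\mathbb{P}^1))$. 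So the real content is to show that $\mathcal{O}(n)[n]$ is \emph{not} g-pure-injective.

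First I would produce an explicit g-pure triangle in $\mathbf{D}(\mathbb{P}^1)$ witnessing the failure of the splitting condition. Starting from the non-split short exact sequence \cref{non g-pure exact sequence} with a suitable choice of $a,b,c,d$ (one that involves $n$, e.g.\ taking one of the twists equal to $n$), view it as an exact triangle $\mathcal{O}(a)\to \mathcal{O}(b)\oplus\mathcal{O}(c)\to \mathcal{O}(d)\to \Sigma\mathcal{O}(a)$ in $\mathbf{D}(\mathbb{P}^1)$, then shift by $[n]$. The key point is that this triangle is g-pure in the sense of \cref{def-gpurity}: localizing $\mathbf{D}_{\textrm{qc}}(\mathbb{P}^1)$ at a point $x$ of the Balmer spectrum $\Spc(\mathbf{D}_{\textrm{qc}}(\mathbb{P}^1)^\omega)\cong \mathbb{P}^1$ gives (a shift of) the triangle attached to the sequence $0\to \mathcal{O}(a)_x\to \mathcal{O}(b)_x\oplus\mathcal{O}(c)_x\to \mathcal{O}(d)_x\to 0$ of \emph{free} $\mathcal{O}_{\mathbb{P}^1,x}$-modules, which is split and hence certainly pure; so by \cref{relation gphan gmono gepi} the original triangle is g-pure in $\mathbf{D}(\mathbb{P}^1)$. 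Since the sequence is non-split, the corresponding triangle does not split in $\mathbf{D}(\mathbb{P}^1)$ either (a short exact sequence of modules splits iff the associated triangle does), and the $[n]$-shifted version has $\mathcal{O}(d)[n]$ — or, rearranging the triangle, $\mathcal{O}(a)[n]$ — as a term. By choosing the numerical data so that the relevant term is $\mathcal{O}(n)[n]$ and using \cref{ginj kill g-phantoms}/\cref{relation gphan gmono gepi} (a non-split g-pure triangle out of $x$ exhibits a nontrivial g-phantom into $x$), we conclude $\mathcal{O}(n)[n]$ is not g-pure-injective.

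The main obstacle I anticipate is bookkeeping rather than conceptual: arranging the arithmetic constraints $a<b<d$, $a<c<d$, $a+d=b+c$ so that after the shift by $[n]$ the distinguished object of the triangle is exactly $\mathcal{O}(n)[n]$ (as opposed to some other twist), and checking that the ambient shift does not interfere — i.e.\ that geometric purity is stable under suspension, which follows formally from \cref{relation gphan gmono gepi} since suspension commutes with each $\iota_\P^\ast$. One should also be slightly careful that ``pure at every stalk'' in the \cref{def-gpurity} sense genuinely matches ``pure after localizing at every prime of $\Spc$'', using the identification $\Spc(\mathbf{D}_{\textrm{qc}}(\mathbb{P}^1)^\omega)\cong \mathbb{P}^1$ together with \cref{pure=gpure affine} locally; but since the localized sequences are sequences of free modules, they are split, so this technical point trivializes in the case at hand. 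Once the explicit g-pure non-split triangle with term $\mathcal{O}(n)[n]$ is in hand, the non-g-pure-injectivity is immediate.
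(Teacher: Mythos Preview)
Your proposal is correct and follows essentially the same route as the paper: use the non-split sequence \cref{non g-pure exact sequence}, regarded as a g-pure triangle in $\mathbf{D}(\mathbb{P}^1)$, to exhibit a non-split g-pure monomorphism out of $\mathcal{O}(n)$ (after choosing $a=n$), and then shift. The paper's proof is a one-liner invoking exactly this triangle; your version just makes explicit the bookkeeping (choice of $a,b,c,d$, stability under suspension, and the identification of the Balmer-localizations with the stalks) that the paper leaves implicit. The detour through \cref{ginj kill g-phantoms} is unnecessary, since a non-split g-pure monomorphism out of $x$ witnesses directly, by definition, that $x$ is not g-pure-injective.
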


\begin{proof}
    The g-pure exact triangle given by \cref{non g-pure exact sequence} is not split and hence $\mathcal{O}(n)$ cannot be g-pure-injective.
\end{proof}

\begin{recollection}\label{geometric pure injectives in P}
    By \cite[Theorem 5.5]{SP18}, we know that any indecomposable pure-injective sheaf in $\mathrm{QCoh}(\mathbb{P}^1)$ belongs to the following disjoint list.
    \begin{enumerate}
        \item Line bundles, i.e., $\mathcal{O}(n)$ for $n\in \mathbb{Z}$. 
        \item  g-pure-injective sheaves: These correspond to the indecomposable torsion sheaves, the Pr\"ufer sheaves, the adic sheaves and the sheaf of rational functions. 
    \end{enumerate}
\end{recollection}

\begin{notation}
Let us  write $\pinj(\mathbb
P^1)$ and $\gpinj(\mathbb
P^1)$ to denote $\pinj(\mathrm{QCoh}(\mathbb{P}^1))$ and $\gpinj(\mathrm{QCoh}(\mathbb{P}^1))$, respectively.    
\end{notation}

\begin{proposition}
    Let $z$ be a complex in $\mathbf{D}(\mathbb{P}^1)$. Then $z$ is g-pure-injective if and only if it is of the form $\coprod_{n\in \mathbb{Z}} x_n[n] $ with $x_n$ a g-pure-injective quasi-coherent sheaf on $\mathbb{P}^1$. In particular we have a set bijection 
\[
\gpinj(\mathbf{D}(\mathbb
P^1))=\coprod_{\mathbb{Z}}\gpinj(\mathbb{P}^1), \quad x_n[n] \mapsfrom x_n.
\]
\end{proposition}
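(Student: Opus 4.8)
The statement combines two ingredients: the classification of all indecomposable pure-injectives in $\mathbf{D}(\mathbb{P}^1)$ (via \cref{pure-injectives in D(P)} and the derived equivalence with the Kronecker algebra, plus \cref{geometric pure injectives in P}), and a compatibility result saying that g-purity in $\mathbf{D}(\mathbb{P}^1)$ is detected componentwise on the pieces $x_n[n]$. I would organize the proof around these two ingredients. First I would reduce to the indecomposable case: since both the class of g-pure-injectives and the class of coproducts of shifted g-pure-injective sheaves are closed under coproducts (for the former this needs a small argument, see below), and since every pure-injective in $\mathbf{D}(\mathbb{P}^1)$ is a coproduct of indecomposables with at most one in each shift $[n]$ by \cref{pure-injectives in D(P)}, it suffices to show that an indecomposable complex is g-pure-injective iff it is of the form $x_n[n]$ with $x_n \in \gpinj(\mathbb{P}^1)$.

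**Key steps.** Step 1: every indecomposable g-pure-injective in $\mathbf{D}(\mathbb{P}^1)$ is in particular pure-injective by \cref{cor-gpinj is pinj}, hence by \cref{pure-injectives in D(P)} it is of the form $x_n[n]$ for some indecomposable pure-injective sheaf $x_n$ and some $n \in \mathbb{Z}$. Step 2: rule out the line bundles — if $x_n = \mathcal{O}(m)$, then $\mathcal{O}(m)[n]$ is not g-pure-injective; the exact sequence \cref{non g-pure exact sequence} (suitably twisted so $\mathcal{O}(m)$ appears as the kernel) gives, after shifting by $[n]$, a non-split exact triangle in $\mathbf{D}(\mathbb{P}^1)$ whose localization at each $x \in \mathbb{P}^1$ is a triangle of free $\mathcal{O}_{\mathbb{P}^1,x}$-modules, hence g-pure; compare \cref{prop-o(n)-notgpure}. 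Step 3 (the converse): if $x_n \in \gpinj(\mathbb{P}^1)$ — i.e.\ a torsion sheaf, a Prüfer sheaf, an adic sheaf, or $k(\eta)$ (\cref{geometric pure injectives in P}) — show $x_n[n]$ is g-pure-injective in $\mathbf{D}(\mathbb{P}^1)$. By \cref{pure=gpure affine}/the fact that purity and g-purity agree stalkwise, it suffices to check that for each $\mathbf p \in \Spc(\mathbf{D}(\mathbb{P}^1)^\omega) = \mathbb{P}^1$, the localization $\iota_{\mathbf p}^\ast(x_n[n])$ is pure-injective in $\mathbf{D}(\mathbb{P}^1)_{\mathbf p} \simeq \mathbf{D}(\mathcal{O}_{\mathbb{P}^1,\mathbf p})$ (using \cref{rem-local-gpurity}, so g-pure-injective $=$ pure-injective there). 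For a closed point $\mathbf p$, the localization $\iota_{\mathbf p}^\ast$ sends a sheaf to its stalk (appropriately derived); each of $E(x)$, $A(x)$, $k(\eta)$ localizes to either $0$ or to a pure-injective module over the discrete valuation ring $\mathcal{O}_{\mathbb{P}^1,\mathbf p}$ (injective modules, adic completions, and the fraction field are all pure-injective over a DVR), and torsion sheaves concentrated at $x$ localize to finite-length, hence pure-injective, modules; at the generic point only $k(\eta)$ survives and localizes to $k(\eta)$ itself. Then apply \cref{pure-injectives in D(P)} over each local ring to conclude $\iota_{\mathbf p}^\ast(x_n)[n]$ is pure-injective, and hence $x_n[n]$ is g-pure-injective by \cref{def-gpurity}.

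**The main obstacle.** I expect the delicate point to be Step 1's coproduct reduction together with making Step 3 rigorous: one must check that an arbitrary g-pure-injective complex really does decompose as $\coprod_n x_n[n]$ with the $x_n$ themselves g-pure-injective, not merely pure-injective. This follows because g-pure-injectives form a definable-type class closed under products, and one argues as in \cref{lem-purity-covers}: using that $\mathbf{D}(\mathbb{P}^1)$ has the Krull–Schmidt property for pure-injectives (\cite{Prest}), an indecomposable g-pure-injective is indecomposable pure-injective, and conversely a coproduct of g-pure-injective sheaves-in-shifts is g-pure-injective because $\iota_{\mathbf p}^\ast$ preserves coproducts and pure-injectivity is preserved by coproducts here (or: reduce to indecomposables and use that the class is closed under the relevant operations). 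A secondary subtlety is identifying $\mathbf{D}(\mathbb{P}^1)_{\mathbf p}$ with $\mathbf{D}(\mathcal{O}_{\mathbb{P}^1,\mathbf p})$ compatibly with the restricted-Yoneda picture so that "pure-injective" matches on the two sides — but this is exactly the content of \cref{rem-local-gpurity} combined with the standard description of tt-stalks of $\mathbf{D}_{\mathrm{qc}}$ of a scheme. Everything else is bookkeeping with the lists in \cref{pure-injectives in D(P)} and \cref{geometric pure injectives in P}.
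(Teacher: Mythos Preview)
Your overall architecture --- reduce to indecomposables via \cref{pure-injectives in D(P)}, rule out line bundles, then show the remaining sheaves give g-pure-injective complexes --- matches the paper's, and Steps~1 and~2 are fine. The genuine gap is in Step~3.

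You assert that to show $x_n[n]$ is g-pure-injective ``it suffices to check that for each $\mathbf p$, the localization $\iota_{\mathbf p}^\ast(x_n[n])$ is pure-injective in $\mathbf{D}(\mathcal{O}_{\mathbb{P}^1,\mathbf p})$'', and you invoke \cref{def-gpurity} for the final implication. But \cref{def-gpurity} says nothing of the sort: an object $x$ is g-pure-injective when every g-pure monomorphism \emph{out of} $x$ splits. Knowing that each $\iota_\P^\ast(x)$ is pure-injective only tells you that such a monomorphism splits \emph{after} localizing at each $\P$, and the paper contains no mechanism to glue these local splittings into a global one --- indeed, the whole point of introducing g-purity is that stalk-wise information need not reassemble (cf.\ \cref{war-purity-gpurity}). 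No result in the paper establishes ``$\iota_\P^\ast(x)$ pure-injective for all $\P$ $\Rightarrow$ $x$ g-pure-injective'', and neither \cref{rem-local-gpurity} nor \cref{pure=gpure affine} helps: those concern the stalk categories individually, not the passage back to $\T$.

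The paper's Step~3 avoids this entirely. By \cite{SP18} (recorded in \cref{geometric pure injectives in P} and the surrounding discussion), every indecomposable g-pure-injective sheaf on $\mathbb{P}^1$ is already of the form $\iota_{x,\ast}(M)$ for some point $x\in\mathbb{P}^1$ and some pure-injective $\mathcal{O}_{\mathbb{P}^1,x}$-module $M$. Then \cref{prop-iota*-gpure-pure} applies directly: pushforwards of pure-injectives from a tt-stalk are g-pure-injective in $\T$. No stalk-by-stalk verification is needed. For the coproduct step you flagged as the ``main obstacle'', the paper dispatches it in one line by observing $\coprod_{n} x_n[n]\simeq \prod_{n} x_n[n]$ (via \cite[Lemma~3.9]{GP04}), and products of g-pure-injectives are g-pure-injective.
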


\begin{proof}
    First, by \cite[Theorem 8.1]{GP05} we already know that any pure-injective object $z$ in $\mathbf{D}(\mathbb
P^1)$ has the form $\coprod_{n \in \mathbb{Z}} x_n[n]$, where each $x_n$ is a pure-injective quasi-coherent sheaf on $\mathbb{P}^1$. It is straightforward to see that any shift of a g-pure-injective complex is again g-pure-injective. Moreover, as observed in \cite[Lemma 3.9]{GP04}, a coproduct of the form $\coprod_{n\in \mathbb{Z}} x_n[n]$ is isomorphic to the product $\prod_{n\in \mathbb{Z}} x_n[n]$. It follows that if each $x_n[n]$ is g-pure-injective, then so is $\coprod_{n\in \mathbb{Z}} x_n[n]$. Therefore, we may assume that $z$ is indecomposable, in which case it must be of the form $x[n]$ for some indecomposable pure-injective quasi-coherent sheaf $x$.

Note that $x$ cannot be a line bundle, by  \cref{prop-o(n)-notgpure}. Thus, it remains to verify that $z$ is g-pure-injective whenever $x$ is a g-pure-injective sheaf. But by \cite{SP18}, we know that such sheaves are precisely the pushforwards of pure-injective $\mathcal{O}_{\mathbb{P}^1,x}$-modules for some $x \in \mathbb{P}^1$. The result then follows from  \cref{prop-iota*-gpure-pure}.
\end{proof}

\section{Local to global for spatiality of the  frame of smashing ideals} 

In this section, we leverage  \cref{thm-gp-stalks} to show that, under certain conditions, the spatiality of the frame of smashing localizations of a rigidly-compactly generated tt-category $\T$ is a local property on $\Spc(\T^\omega)$. Let us fix some terminology first.  

\begin{notation}
    Let $\T$ be a rigidly-compactly generated  tt-category. We write $\mathbb{S}^\otimes(\T)$ to denote the frame of smashing ideals of $\T$.  
\end{notation}

\begin{terminology}
    We say that a rigidly-compactly generated  tt-category $\T$ is \emph{spatial} if its frame of smashing ideals $\mathbb{S}^\otimes(\T)$ is spatial.
\end{terminology}

The first observation is that spatiality passes to local categories without much difficulty. 

\begin{proposition}\label{prop-global-to-local}
    Let $\T$ be a rigidly-compactly generated tt-category. If $\T$ is spatial, then the tt-stalk  $\T_\P$ is  spatial for each point $\P$ of $\mathrm{Spc}(\T^\omega)$.
\end{proposition}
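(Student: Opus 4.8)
\textbf{Proof strategy for \cref{prop-global-to-local}.}

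The plan is to exhibit the frame $\mathbb{S}^\otimes(\T_\P)$ as a \emph{quotient frame} of $\mathbb{S}^\otimes(\T)$ and then invoke the fact that quotients of spatial frames (by nuclei of a suitable kind, e.g.\ those arising from sublocales coming from subspaces) remain spatial. Concretely, since $\T_\P = \T/\loc(\P)$ is a finite localization with geometric quotient functor $\iota_\P^\ast\colon \T \to \T_\P$, and $\iota_\P^\ast$ preserves compact objects and admits a coproduct-preserving right adjoint $\iota_{\P,\ast}$, I would first recall (from \cite{BF11}, or \cite{balmer2020frame}) that $\iota_\P^\ast$ induces a frame homomorphism
\[
\mathbb{S}^\otimes(\T) \longrightarrow \mathbb{S}^\otimes(\T_\P), \qquad \S \longmapsto \loc_\otimes(\iota_\P^\ast\S),
\]
and that this map is surjective: every smashing ideal of $\T_\P$ is the image of one in $\T$, because smashing ideals pull back and push forward compatibly along a smashing localization (the smashing ideals of $\T_\P = \T/\T_{\supp(\P)}$ are precisely the smashing ideals of $\T$ containing $\T_{\supp(\P)}$, pushed to the quotient). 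Thus $\mathbb{S}^\otimes(\T_\P)$ is a frame quotient of $\mathbb{S}^\otimes(\T)$.

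The second step is to identify this quotient concretely enough to control spatiality. The cleanest route: the smashing localization at $\T_{\supp(\P)}$ is itself a smashing ideal, say $\S_0 \in \mathbb{S}^\otimes(\T)$, and the quotient frame above is the \emph{closed sublocale} (or rather the principal up-set) $\mathord{\uparrow}\S_0 = \{\S \in \mathbb{S}^\otimes(\T) \mid \S \supseteq \S_0\}$, which as a frame is isomorphic to $\mathbb{S}^\otimes(\T_\P)$. A principal up-set in a frame is again a frame (with meets computed as in the ambient frame, joins by joining then applying the closure $\S \mapsto \S \vee \S_0$), and it is a quotient frame via $\S \mapsto \S \vee \S_0$. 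Now if $\mathbb{S}^\otimes(\T)$ is spatial, i.e.\ isomorphic to the frame of opens of a space $Y$, then $\S_0$ corresponds to an open $V \subseteq Y$, and the up-set $\mathord{\uparrow}\S_0$ corresponds to the frame of opens of the \emph{closed} complement $Y \setminus V$ with its subspace topology — hence is spatial. Therefore $\mathbb{S}^\otimes(\T_\P)$ is spatial.

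The main obstacle, and the step I would be most careful about, is verifying that the frame homomorphism $\mathbb{S}^\otimes(\T)\to\mathbb{S}^\otimes(\T_\P)$ really is the closed-sublocale quotient $\S\mapsto \S\vee\S_0$ — i.e.\ that (i) $\iota_\P^\ast$ sends smashing ideals of $\T$ to smashing ideals of $\T_\P$ (this uses that a geometric functor with the two adjoints behaves well, and that the composite localization $\T\to\T_\P\to(\T_\P)/\iota_\P^\ast\S$ agrees with a localization of $\T$ at the smashing ideal generated by $\S$ and $\S_0$), (ii) the map is surjective (lifting smashing ideals of $\T_\P$ to smashing ideals of $\T$ containing $\S_0$, which is essentially the correspondence between smashing localizations of a quotient and smashing localizations of the total category refining the given one), and (iii) it preserves arbitrary joins and finite meets, which requires knowing how joins of smashing ideals are formed (as the smashing closure of the generated localizing ideal) and that $\iota_\P^\ast$ commutes with this since it is a left adjoint preserving compacts. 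Once these compatibilities are in hand, the passage to topological spaces in the last step is formal: a closed subspace of a spatial space is spatial, with opens the restrictions of opens from the ambient space. I would also remark that, alternatively, one can avoid the explicit sublocale description and argue directly that the points of $\mathbb{S}^\otimes(\T_\P)$ — which exist in abundance by spatiality of $\mathbb{S}^\otimes(\T)$ together with surjectivity of the frame map — separate its elements; but the closed-sublocale argument is the most transparent.
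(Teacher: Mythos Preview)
Your proposal is correct and follows essentially the same route as the paper. Both arguments identify $\mathbb{S}^\otimes(\T_\P)$ with the principal up-set $\{\S \in \mathbb{S}^\otimes(\T) \mid \loc(\P) \subseteq \S\}$ via the closed nucleus $\S \mapsto \S \vee \loc(\P)$, and then deduce spatiality of this closed sublocale from spatiality of $\mathbb{S}^\otimes(\T)$; the paper phrases the last step as ``complemented sublocales of a spatial locale are spatial'' (citing Picado--Pultr), whereas you unwind it explicitly as ``opens of a closed subspace''.
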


\begin{proof}
   Fix $\P\in \mathrm{Spc}(\T^\omega)$. Observe that the interval 
   \[[\P,\T]\coloneqq \{\S\in\mathbb{S}^\otimes (\T)\mid\P\subseteq \S\}\]
    is a frame. Indeed, this follows since it is a quotient given by the nucleus 
    \[ \_\vee\P\colon\mathbb{S}^\otimes(\T)\rightarrow[\P,\T].
    \]
    Moreover, we obtain an equivalence
    \[[\P,\T]\simeq\mathbb{S}^\otimes (\T_{\P}), \quad  \S \mapsto \S/\loc(\P) \]
 given by \emph{taking the quotient}. Furthermore, the nucleus $- \vee \P$ is complemented in the frame of all nuclei on $\mathbb{S}^\otimes(\T)$. Consequently, its associated sublocale is complemented, since it is closed. Now, suppose that $\T$ is spatial. Then, by \cite[Proposition 3.3]{picado2011frames}, which states that complemented sublocales of a spatial locale are spatial, we conclude that $\T_\P$ is spatial.
\end{proof}

We will prove a \textit{partial} converse to the previous proposition. Some preliminary results are needed.

\begin{recollection}
     Recall that a definable subcategory  $\mathcal{D}$ of a compactly generated triangulated category $\T$ is a full subcategory $\D$ such that  there is a collection of coherent functors $\{F_i\colon \T\to \mathrm{Ab}\}_{i\in I}$ such that $x$ is in $\mathcal{D}$ if and only if $F_i(x)=0$ for all $i\in I$; see \cref{rec-definable-cat}. A \textit{definable tt-ideal} $\mathcal{D}$ in a rigidly-compactly generated tt-category $\T$ is a definable subcategory $\mathcal{D}$ which is also a thick tensor-ideal, that is, it is a triangulated subcategory of $\T$ which satisfies that $\T\otimes \D\subseteq \D$. 
\end{recollection}

\begin{remark}
    Since a definable subcategory $\D$ of $\T$ is closed under homotopy filtered colimits, we get that the property $\T\otimes\D\subseteq \D$ is equivalent to $\T^\omega\otimes \D\subseteq \D$. 
\end{remark}

\begin{notation}
    Let $\T$ be a rigidly-compactly generated tt-category. Let $(\mathbb{D}^\otimes(\T),\subseteq)$  denote the lattice of definable tensor-ideals in $\T$ ordered by inclusion. 
\end{notation}
 
The following result corresponds to \cite[Proposition 5.2.13]{Wag23}.

\begin{proposition}\label{equiv between def and smashing}
    There is a morphism of lattices 
\begin{align*}
  \mathbb{S}^\otimes(\T) & \to  \mathbb{D}^\otimes(\T)^{\mathrm{op}}\\
        \mathcal{S} & \mapsto  \mathcal{S}^\perp   
\end{align*}
    with inverse given by $\mathcal{D}\mapsto {}^\perp\mathcal{D}$. Here ${}^\perp\mathcal{C}$ (resp. $\mathcal{C}^\perp$) denotes the left (resp. right) orthogonal complement of $\mathcal{C}$.      
\end{proposition}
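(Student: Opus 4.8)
The plan is to establish the bijection between smashing ideals and definable tt-ideals by combining the general theory of smashing localizations recalled in \cref{subsection-smashingloc} with the characterization of definable subcategories via pure-injectives from \cite{Kra00}. First I would show that the assignment $\S \mapsto \S^\perp$ lands in $\mathbb{D}^\otimes(\T)$. That $\S^\perp$ is a tensor-ideal is exactly \cref{rec-smashing-ideals}(c). To see that $\S^\perp$ is definable, recall from \cref{rec-smashing-ideals} that $\S^\perp = \mathrm{Im}(L_\S) = \mathrm{Ker}(\Gamma_\S)$ and that $\Gamma_\S$ is identified with $\Gamma_\S(\mathbb 1)\otimes -$; an object $y$ lies in $\S^\perp$ precisely when $\Gamma_\S(\mathbb 1)\otimes y \simeq 0$, equivalently when the coherent functors $z \mapsto \Hom_\T(z, \Gamma_\S(\mathbb 1)\otimes y)$ vanish for $z$ ranging over $\T^\omega$, which exhibits $\S^\perp$ as a definable subcategory (this is the standard fact that the kernel of tensoring with a fixed object is definable, and it uses that smashing localizations are given by tensoring with an idempotent). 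Thickness of $\S^\perp$ is immediate since it is the essential image of the exact functor $L_\S$.

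Next I would check that $\D \mapsto {}^\perp\D$ sends a definable tt-ideal to a smashing ideal. Here the key input is that $\D$ definable means $\D$ is closed under products, coproducts, and pure subobjects, so ${}^\perp\D$ is a localizing subcategory for which the Bousfield localization exists; the tensor-ideal condition $\T\otimes\D\subseteq\D$ (equivalently $\T^\omega\otimes\D\subseteq\D$ by the \cref{rec-definable-cat} remark) forces ${}^\perp\D$ to be a tensor-ideal as well, via rigidity/dualizability of compacts. That the resulting localization is smashing — i.e. the right adjoint to the quotient preserves coproducts — follows because $\D = ({}^\perp\D)^\perp$ is closed under coproducts, which is one of the standard equivalent formulations of smashing (see the discussion around \cref{rec-smashing-ideals}).

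Then I would verify that the two assignments are mutually inverse and order-reversing. Order-reversal is formal from the definition of orthogonal complements. For ${}^\perp(\S^\perp) = \S$ and $({}^\perp\D)^\perp = \D$, the nontrivial containments use the idempotent triangle $\Gamma_\S \to \mathrm{Id} \to L_\S$: any $x$ with $\Hom(x, \S^\perp) = 0$ must have its $L_\S$-part vanish, hence $x \simeq \Gamma_\S x \in \S$; and conversely $\D$ being definable (so in particular closed under the relevant limits/colimits) together with $({}^\perp\D)^\perp \supseteq \D$ forces equality by a counting/generation argument on the associated localization. Finally, to promote this bijection of posets to a morphism of lattices I would check it takes finite meets to finite joins and vice versa, which amounts to the identities ${}^\perp(\D_1 \cap \D_2) = {}^\perp\D_1 \vee {}^\perp\D_2$ and the dual; these follow from the idempotent descriptions of joins of smashing ideals.

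The main obstacle I expect is the definability of $\S^\perp$ and, dually, verifying that ${}^\perp\D$ is genuinely \emph{smashing} rather than merely localizing — that is, pinning down the precise equivalence between "$\D$ closed under coproducts" and "the quotient by ${}^\perp\D$ has a coproduct-preserving right adjoint." In the rigidly-compactly generated setting this is where the tensor structure does real work: one needs that a localizing tensor-ideal generated by its definable-closure behaves well, and the cleanest route is probably to cite the correspondence between smashing ideals and their idempotents together with Krause's characterization of definable subcategories, rather than reprove it. Since the statement is attributed to \cite[Proposition 5.2.13]{Wag23}, I would keep the argument brief and lean on that reference for the more delicate closure properties.
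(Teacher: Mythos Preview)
The paper does not prove this proposition at all: it simply records the statement and attributes it to \cite[Proposition 5.2.13]{Wag23}. Your proposal correctly anticipates this and ultimately defers to that reference, so in that sense your approach matches the paper's.

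One small correction to your sketch: the functors $y \mapsto \Hom_\T(z, \Gamma_\S(\mathbb 1)\otimes y)$ are not obviously coherent in the sense of \cref{rec-definable-cat}, since $\Gamma_\S(\mathbb 1)$ is typically not dualizable and these functors are not visibly cokernels of maps between representables on compacts. The cleaner route to definability of $\S^\perp$ is the one you allude to parenthetically: use Krause's closure-condition characterization. Namely, $\S^\perp$ is closed under products (as a right orthogonal), under coproducts (since $\S$ is smashing), and under pure subobjects (since $\e_\S \otimes -$ preserves pure monomorphisms by \cite[Proposition 2.10]{BKS19}, and a pure monomorphism into $0$ forces the source to vanish in a compactly generated category). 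This is a cosmetic fix rather than a structural gap; the rest of your outline, including the identification of the delicate point $({}^\perp\D)^\perp = \D$ and the decision to cite \cite{Wag23} for it, is sound.
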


\subsection{The geometric Ziegler spectrum}

Recall that $\gpinj(\T)$ denotes the collection of isomorphism classes of indecomposable g-pure-injective objects in $\T$. Note that this is a set since every g-pure-injective is pure-injective by  \cref{cor-gpinj is pinj}, and the collection of isomorphism classes of pure-injectives has been shown to form a set; see \cite{Kra00}.   

\begin{definition}
    We say that a subset $X$ of $\gpinj(\T)$ is \textit{tt-Ziegler closed} (or just \textit{tt-closed})  if $X=\mathcal{D}\cap \gpinj(\T)$ for some definable tt-ideal $\mathcal{D}$ of $\T$. 
\end{definition}

\begin{remark}
    We emphasize that we do not mean that these  tt-closed subsets of $\gpinj(\T)$ necessarily define the closed subsets of  a topology on $\gpinj(\T)$.
\end{remark}

\begin{convention}\label{conv-geometric-Ziegler}
    Whenever the tt-closed subsets define a topology on $\gpinj(\T)$, we denote the resulting space by 
    \[
    \mathrm{ttZg}(\T)
    \]
    and call it the \emph{geometric tt-Ziegler spectrum of} $\T$.  
\end{convention}

\begin{remark}
    Note that when $\T$ is local and the tt-closed subsets define a topology on $\gpinj(\T)$, the space $\mathrm{ttZg}(\T)$ has the same underlying set of points as $\mathrm{Zg}(\T)$. However, the two topologies are usually different.
\end{remark}

\begin{remark}
     Assume that the  tt-closed subsets define a topology on $\gpinj(\T)$. In this case, the map  \begin{align*}
       \mathbb{D}^\otimes(\T) & \to \mathrm{Closed}(\mathrm{ttZg}(\T))\\ 
        \D & \mapsto \D\cap \gpinj(\T)
   \end{align*} 
  is an order-preserving bijection. Indeed, note that this map is the restriction of the map involved in the fundamental correspondence in \cite{Kra00}, which states that any definable subcategory is completely determined by the pure-injectives it contains. In particular, this map is a lattice isomorphism.
\end{remark}

\begin{terminology}\label{def-ziegler-realize}
   In the case that tt-closed subsets define a topology on $\gpinj(\T)$, we simply say that the geometric tt-Ziegler spectrum $\mathrm{ttZg}(\T)$ \textit{realizes} the lattice of definable ideals $\mathbb{D}^\otimes(\T)$.
\end{terminology}

Let us emphasize that the difficulty in showing that the tt-closed subsets determine a topology on the geometric Ziegler spectrum lies in proving that the union of two tt-closed subsets is again tt-closed. Indeed, tt-closed subsets are always closed under intersections, as we will see.

\begin{lemma}\label{lemma-intersection tt-closed}
    Let $\T$ be a rigidly-compactly generated tt-category. Let $(X_i)_{i\in I}$ be a family of tt-closed subsets of $\gpinj(\T)$. Then $\cap_{i\in I}X_i$ is tt-closed. 
\end{lemma}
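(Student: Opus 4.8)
The statement to prove is that tt-closed subsets of $\gpinj(\T)$ are closed under arbitrary intersections. My plan is to reduce everything to the analogous well-known fact about definable subcategories, namely that an arbitrary intersection of definable subcategories is again definable. Concretely, suppose $X_i = \D_i \cap \gpinj(\T)$ for definable tt-ideals $\D_i$. The first step is to form $\D \coloneqq \bigcap_{i \in I} \D_i$ and verify that $\D$ is again a definable tt-ideal: it is a definable subcategory because if $\D_i$ is cut out by the vanishing of a family of coherent functors $\{F_{i,j}\}_j$, then $\D$ is cut out by the union $\{F_{i,j}\}_{i,j}$ of all these families; and it is a thick tensor-ideal because the intersection of thick tensor-ideals is manifestly a thick tensor-ideal (the defining closure conditions are all of the form ``closed under'', so they survive intersection).

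The second step is the identification $\D \cap \gpinj(\T) = \bigcap_{i\in I}(\D_i \cap \gpinj(\T)) = \bigcap_{i \in I} X_i$, which is a trivial set-theoretic manipulation since $\D = \bigcap_i \D_i$. Combining the two steps exhibits $\bigcap_i X_i$ as $\D \cap \gpinj(\T)$ for the definable tt-ideal $\D$, which is exactly the assertion that it is tt-closed.

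I do not anticipate a serious obstacle here; the lemma is essentially a bookkeeping statement, and the only point that needs a word of justification is that an arbitrary (not just finite) intersection of definable subcategories is definable — but this follows immediately from the defining description of definable subcategories via vanishing of coherent functors, recalled in \cref{rec-definable-cat}, since one simply takes the union of the indexing sets of coherent functors. This is, of course, in sharp contrast with \emph{unions}, where no such trick is available — and indeed, as the text preceding the lemma emphasizes, that is precisely where the real difficulty in building the topology on $\gpinj(\T)$ lies.

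\begin{proof}
    For each $i \in I$, write $X_i = \D_i \cap \gpinj(\T)$ for some definable tt-ideal $\D_i$ of $\T$. Set $\D \coloneqq \bigcap_{i\in I}\D_i$. We claim that $\D$ is again a definable tt-ideal. For each $i$, definability of $\D_i$ provides a family of coherent functors $\{F_{i,j}\colon \T\to \mathrm{Ab}\}_{j\in J_i}$ such that an object $x$ lies in $\D_i$ if and only if $F_{i,j}(x)=0$ for all $j\in J_i$; see \cref{rec-definable-cat}. Then $x$ lies in $\D$ if and only if $F_{i,j}(x)=0$ for all $i\in I$ and all $j\in J_i$, so $\D$ is definable, being cut out by the family $\{F_{i,j}\}_{(i,j)}$. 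Moreover $\D$ is a thick tensor-ideal: each $\D_i$ is a triangulated subcategory closed under retracts with $\T\otimes\D_i\subseteq\D_i$, and all of these closure properties are preserved under intersection, so the same holds for $\D$. Hence $\D$ is a definable tt-ideal.

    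It remains to observe that
    \[
    \D\cap\gpinj(\T) \;=\; \Big(\bigcap_{i\in I}\D_i\Big)\cap\gpinj(\T) \;=\; \bigcap_{i\in I}\bigl(\D_i\cap\gpinj(\T)\bigr) \;=\; \bigcap_{i\in I}X_i.
    \]
    Thus $\bigcap_{i\in I}X_i$ is of the form $\D\cap\gpinj(\T)$ for the definable tt-ideal $\D$, so it is tt-closed.
\end{proof}
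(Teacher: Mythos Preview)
Your proof is correct and follows essentially the same approach as the paper: form $\D=\bigcap_i \D_i$, note that intersections of definable subcategories (resp.\ thick tensor-ideals) are again definable (resp.\ thick tensor-ideals), and then observe the trivial set identity $\bigcap_i X_i = \D\cap\gpinj(\T)$. The paper's version is terser, merely asserting closure of $\mathbb{D}^\otimes(\T)$ under intersections, whereas you spell out the coherent-functor argument explicitly.
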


\begin{proof}
    First,  note that the intersection of elements in $\mathbb{D}^\otimes(\T)$ is an element in $\mathbb{D}^\otimes(\T)$ since the intersection of definable subcategories (resp. tt-ideals) is again definable (resp tt-ideal). Then, since   $X_i=\D_i \cap \gpinj(\T)$, for some $\D_i\in \mathbb{D}^\otimes(\T)$, we obtain that 
 \[
 \bigcap_{i\in I} X_i = \gpinj(\T)\cap \bigcap_{i\in I}\D_i.
 \]
We conclude that the set of tt-closed subsets is closed under intersection. 
\end{proof}

\begin{proposition}\label{local to global}
    Let $\T$ be a rigidly-compactly generated tt-category.  Suppose that the geometric tt-Ziegler spectrum of $\T_\P$ realizes the lattice of definable tensor-ideals $\mathbb{D}^\otimes(\T_\P)$ for each $\P\in\Spc(\T^\omega)$ (see \cref{def-ziegler-realize}). Then the tt-closed subsets of $\gpinj(\T)$ define a topology, that is, the geometric tt-Zielger spectrum $\mathrm{ttZg}(\T)$ realizes $\mathbb{D}^\otimes(\T)$.  
\end{proposition}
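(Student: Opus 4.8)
The plan is to reduce the problem, via \cref{equiv between def and smashing}, to showing that the tt-closed subsets of $\gpinj(\T)$ are closed under finite unions; by \cref{lemma-intersection tt-closed} they are already closed under arbitrary intersections, and $\varnothing$ and $\gpinj(\T)$ are evidently tt-closed, so this is the only axiom that needs checking. Thus, given two definable tt-ideals $\D_1,\D_2$ of $\T$, I want to produce a definable tt-ideal $\D$ with
\[
\D\cap\gpinj(\T) = \bigl(\D_1\cap\gpinj(\T)\bigr)\cup\bigl(\D_2\cap\gpinj(\T)\bigr).
\]
The natural candidate is the smallest definable tt-ideal containing $\D_1$ and $\D_2$, equivalently (under the anti-equivalence of \cref{equiv between def and smashing}) the object coming from ${}^\perp\D_1\wedge{}^\perp\D_2$ in the smashing frame; in any case call it $\D_1\vee\D_2$. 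One inclusion is trivial since $\D_1,\D_2\subseteq\D_1\vee\D_2$. For the reverse inclusion I must show every indecomposable g-pure-injective $x$ lying in $\D_1\vee\D_2$ already lies in $\D_1$ or in $\D_2$.

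Here is where \cref{thm-gp-stalks} enters, and this is the crux of the argument. Let $x\in\gpinj(\T)$ be indecomposable. By \cref{thm-gp-stalks} there is a prime $\P\in\Spc(\T^\omega)$ and a pure-injective $y\in\T_\P$ with $x\simeq\iota_{\P,\ast}(y)$; since $\iota_{\P,\ast}$ is fully faithful and preserves (co)products, $y$ is indecomposable, so $y\in\gpinj(\T_\P)=\pinj(\T_\P)$. The plan is to transport membership questions between $\T$ and $\T_\P$ along the adjunction $\iota_\P^\ast\dashv\iota_{\P,\ast}$. Concretely, I would argue: (i) for a definable tt-ideal $\D$ of $\T$, its image $\iota_\P^\ast(\D)$ generates a definable tt-ideal $\D_\P$ of $\T_\P$ (using that $\iota_\P^\ast$ is geometric, hence definable, so it sends definable subcategories into definable ones in a way compatible with closure, and sends tt-ideals to tt-ideals); (ii) conversely, $\iota_{\P,\ast}$ reflects membership in the sense that $\iota_{\P,\ast}(y)\in\D$ if and only if $y\in\D_\P$ — the forward direction because $\D$ is a tensor-ideal and $\iota_{\P,\ast}(y)\simeq\iota_{\P,\ast}(y)\otimes\mathbb 1$ localizes correctly, the reverse because $\iota_{\P,\ast}(\D_\P)\subseteq\D$ as $\D$ is closed under the relevant operations and $\f_\P\otimes\D\subseteq\D$. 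Combining with \cref{iota preserves definables} and \cref{prop-iota*-gpure-pure}, membership of $x=\iota_{\P,\ast}(y)$ in a definable tt-ideal $\D$ is equivalent to membership of $y$ in the corresponding definable tt-ideal of $\T_\P$.

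With that dictionary, the finite-union claim for $\T$ follows from the same claim in each stalk. Indeed, one checks that $(\D_1\vee\D_2)_\P=(\D_1)_\P\vee(\D_2)_\P$ in $\mathbb{D}^\otimes(\T_\P)$ — this is because the smashing/definable frame constructions are compatible with the geometric localization $\iota_\P^\ast$, so joins are preserved. Then $x\in\D_1\vee\D_2$ forces $y\in(\D_1)_\P\vee(\D_2)_\P$; by hypothesis $\mathrm{ttZg}(\T_\P)$ realizes $\mathbb{D}^\otimes(\T_\P)$, so the tt-closed subsets of $\gpinj(\T_\P)=\pinj(\T_\P)$ form a topology, and in particular $\bigl((\D_1)_\P\cap\pinj(\T_\P)\bigr)\cup\bigl((\D_2)_\P\cap\pinj(\T_\P)\bigr)$ is tt-closed; since it is contained in and has the same "definable ideal closure" as the tt-closed set cut out by $(\D_1)_\P\vee(\D_2)_\P$, and the indecomposable $y$ lies in the latter, the fundamental correspondence of \cite{Kra00} (a definable subcategory is determined by its indecomposable pure-injectives) gives $y\in(\D_i)_\P$ for some $i$. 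Transporting back, $x\in\D_i$. Hence $\D_1\vee\D_2$ restricted to $\gpinj(\T)$ equals the union of the restrictions of $\D_1$ and $\D_2$, so finite unions of tt-closed sets are tt-closed and $\mathrm{ttZg}(\T)$ is a topological space realizing $\mathbb{D}^\otimes(\T)$.

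The main obstacle I anticipate is step (i)--(ii) above: making precise and correct the claim that passing to the stalk $\iota_\P^\ast$ induces a well-behaved map $\mathbb{D}^\otimes(\T)\to\mathbb{D}^\otimes(\T_\P)$ that commutes with finite joins and for which $\iota_{\P,\ast}$ provides an exact reflection of membership of indecomposable g-pure-injectives. The subtlety is that $\iota_\P^\ast$ is a colimit (by \cref{stalks as a colimit}) rather than a single finite localization, so one must either argue via the finite localizations $\iota_{U^c}^\ast$ and pass to the colimit using \cref{thm-gp-stalks}'s construction of $\P$, or invoke the definability of $\iota_\P^\ast$ directly from \cite{BW23}; either way the compatibility of the join in the smashing frame with these localizations is the technical heart, and everything else is a formal consequence of \cref{equiv between def and smashing}, \cref{iota preserves definables}, and the Krause correspondence.
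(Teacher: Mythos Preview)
Your strategy matches the paper's: reduce to closure under binary unions via \cref{lemma-intersection tt-closed}, set $\D=\D_1\vee\D_2$, take $x\in\D\cap\gpinj(\T)$, write $x\simeq\iota_{\P,\ast}(y)$ via \cref{thm-gp-stalks}, push the question to $\T_\P$, apply the local hypothesis, and pull back. The only difference is how the transport is packaged, and the paper's version dissolves the obstacle you flag. Rather than defining $(\D_i)_\P$ and verifying $(\D_1\vee\D_2)_\P=(\D_1)_\P\vee(\D_2)_\P$, the paper switches to the smashing side via \cref{equiv between def and smashing}: with $\S_i={}^\perp\D_i$, the condition $x\in\D_1\vee\D_2=(\S_1\cap\S_2)^\perp$ reads $\Hom_\T(\S_1\cap\S_2,x)=0$, and the adjunction $\iota_\P^\ast\dashv\iota_{\P,\ast}$ converts this directly into $\Hom_{\T_\P}(\iota_\P^\ast\S_1\cap\iota_\P^\ast\S_2,y)=0$, i.e.\ $y\in(\iota_\P^\ast\S_1)^\perp\vee(\iota_\P^\ast\S_2)^\perp$ in $\mathbb{D}^\otimes(\T_\P)$. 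The local hypothesis gives $y\in(\iota_\P^\ast\S_i)^\perp$ for some $i$, and adjunction again yields $\Hom_\T(\S_i,x)=0$, hence $x\in\D_i$. The join-compatibility you worried about becomes, on the smashing side, the statement that a monoidal smashing localization sends the meet $\S_1\cap\S_2$ (generated by $\e_{\S_1}\otimes\e_{\S_2}$) to the meet of the images, which is immediate; no colimit argument over quasi-compact opens is needed.
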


\begin{proof}
 We need to verify that the class of tt-closed subsets of $\gpinj(\T)$ is closed under arbitrary intersections and finite unions. The former is the content of \cref{lemma-intersection tt-closed}.
 
On the other hand, let $X_i=\D_i\cap \gpinj(\T)$ with $\D_i\in \mathbb{D}^\otimes(\T)$, for $i=1,2$. Consider 
\[
\D=\D_1\vee \D_2,
\]
where $\vee$ denotes the join in $\mathbb{D}^\otimes (\T)$. In order to prove our claim, it is enough to verify that  $X_1\cup X_2=\D\cap \gpinj(\T)$. It is clear that $X_1\cup X_2\subseteq \D\cap \gpinj(\T)$. Let $x \in \D\cap \gpinj(\T)$. By \cref{equiv between def and smashing}, we have that 
\[
\mathrm{Hom}_\T(\S_1\cap \S_2,x)=0, \quad \mbox{where}\quad \S_i={}^\bot \D_i,\, i=1,2.
\]
Moreover, by  \cref{thm-gp-stalks}, there is a prime $\P$ in  $\T^\omega$, and a pure-injective $y$ in $\T_\P$ such that $\iota_{\P}^\ast(y)\simeq x $. In particular, we have that 
\[
0=\mathrm{Hom}_\T(\S_1\cap \S_2,x)\cong\mathrm{Hom}_{\T_\P}(\iota_\P^\ast \S_1\cap \iota^\ast_{\P}\S_2,y).
\]
By hypothesis, $\mathbb{D}^\otimes(\T_\P)$ determines the closed subsets of $\mathrm{ttZg}(\T_\P)$ (see \cref{conv-geometric-Ziegler}), hence we deduce that  
\[
y \in \left( (\iota^\ast_{\P}\S_1)^\bot \vee (\iota^\ast_{\P}\S_1)^\bot \right) \cap \mathrm{Zg}^\otimes(\T_\P).
\]
Therefore  
\[
0=\mathrm{Hom}_{\T_\P}(\iota_\P^\ast \S_i, y)\cong \mathrm{Hom}_\T(\S_i,x) \mbox{ for }  i=1  \mbox{ or } i=2.
\]
It follows that $y$ is in $X_1\cup X_2$ as we wanted. 
\end{proof}

The previous proposition provides a type of local-to-global reduction concerning the spatiality of a rigidly-compactly generated tt-category. More concretely:

\begin{corollary}
Let $\T$ be a rigidly-compactly generated tt-category. If the geometric tt-Ziegler spectrum of $\T_\P$ realizes $\mathbb{D}^\otimes(\T_\P)$ for each $\P\in\Spc(\T^\omega)$, then $\T$ is spatial. In this case, $\mathbb S^\otimes(\T)$ is isomorphic to the frame of open subsets of $\mathrm{ttZg}(\T)$.   
\end{corollary}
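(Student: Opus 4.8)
The plan is to obtain this corollary as a formal consequence of \cref{local to global} together with the two order-theoretic identifications already established. First I would apply \cref{local to global}: the standing hypothesis—that $\mathrm{ttZg}(\T_\P)$ realizes $\mathbb{D}^\otimes(\T_\P)$ for every $\P\in\Spc(\T^\omega)$—is exactly what that proposition needs in order to conclude that the tt-closed subsets of $\gpinj(\T)$ are the closed sets of a topology, so that $\mathrm{ttZg}(\T)$ exists and realizes $\mathbb{D}^\otimes(\T)$. Unwinding \cref{def-ziegler-realize}, this means the assignment $\D\mapsto \D\cap\gpinj(\T)$ is an order isomorphism
\[
\mathbb{D}^\otimes(\T)\;\xrightarrow{\sim}\;\mathrm{Closed}(\mathrm{ttZg}(\T)),
\]
namely the restriction to definable tt-ideals of Krause's correspondence between definable subcategories and Ziegler-closed subsets \cite{Kra00}.

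Next I would transport this along the two available (anti-)isomorphisms. Complementation in $\mathrm{ttZg}(\T)$ turns the above into a frame isomorphism $\mathbb{D}^\otimes(\T)^{\mathrm{op}}\cong\mathrm{Open}(\mathrm{ttZg}(\T))$. Precomposing with the lattice anti-isomorphism $\mathbb{S}^\otimes(\T)\to\mathbb{D}^\otimes(\T)^{\mathrm{op}}$, $\S\mapsto\S^\perp$, of \cref{equiv between def and smashing}, we obtain a frame isomorphism
\[
\mathbb{S}^\otimes(\T)\;\xrightarrow{\sim}\;\mathrm{Open}(\mathrm{ttZg}(\T)).
\]
Since the frame of opens of a topological space is spatial by definition, this simultaneously shows that $\T$ is spatial and yields the asserted identification of $\mathbb{S}^\otimes(\T)$ with $\mathrm{Open}(\mathrm{ttZg}(\T))$.

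The only point deserving a line of justification—and hence the (very mild) main obstacle—is that $\D\mapsto\D\cap\gpinj(\T)$ is not merely an order bijection but a morphism of complete lattices, so that dualizing it is legitimate. Both $\mathbb{D}^\otimes(\T)$ and $\mathrm{Closed}(\mathrm{ttZg}(\T))$ are complete lattices (the former is closed under arbitrary intersections by \cref{lemma-intersection tt-closed}), so an order isomorphism between them automatically preserves arbitrary meets and joins; and $\mathbb{D}^\otimes(\T)$ is a co-frame because $\mathbb{S}^\otimes(\T)$ is a frame, so the co-frame distributive law is transported across the isomorphism and, after passing to opposites, gives the frame statement above. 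With this bookkeeping in place the corollary is immediate: all of the genuine work has already been carried out inside \cref{thm-gp-stalks} and \cref{local to global}.
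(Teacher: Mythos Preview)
Your proof is correct and follows essentially the same approach as the paper: the paper's proof is the single sentence ``This follows by \cref{equiv between def and smashing} together with \cref{local to global},'' and you have simply unpacked that reference in more detail, including the complementation step and the verification that the order bijection is a frame isomorphism.
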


\begin{proof}
This follows by  \cref{equiv between def and smashing} together with  \cref{local to global}. 
\end{proof}

In a similar fashion as in  \cref{local to global} we have a local to global statement in terms of open covers.

\begin{proposition}\label{local to global covers}
 Let $\T$ be a rigidly-compactly generated tt-category, and let $\{U_i\mid i\in I\}$ be a finite cover  of $\mathrm{Spc}(\T^\omega)$ by quasi-compact open subsets. Suppose that the geometric tt-Ziegler $\mathrm{ttZg}(\T(U_i))$ realizes  $\mathbb{D}^\otimes(\T(U))$ for each $i\in I$. Then $\mathbb{D}^\otimes(\T)$ determines the closed subsets of a topology on $\gpinj(\T)$. 
\end{proposition}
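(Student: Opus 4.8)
The plan is to run the proof of \cref{local to global} almost verbatim, with the tt-stalks $\T_\P$ replaced by the finite localizations $\T(U_i)$, using \cref{thm-gp-stalks} together with \cref{lem: loc at U then at p} to place an arbitrary indecomposable g-pure-injective object of $\T$ inside one of the $\T(U_i)$. As there, the only issue is closure of the tt-closed subsets of $\gpinj(\T)$ under finite unions; closure under arbitrary intersections is \cref{lemma-intersection tt-closed}. So fix $X_\ell=\D_\ell\cap\gpinj(\T)$ with $\D_\ell\in\mathbb{D}^\otimes(\T)$ for $\ell=1,2$, put $\D=\D_1\vee\D_2$, and set $\S_\ell={}^\perp\D_\ell$. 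Since $\S_1\cap\S_2$ is again a smashing ideal and computes the meet of $\S_1,\S_2$, the anti-equivalence of \cref{equiv between def and smashing} gives $\D=(\S_1\cap\S_2)^\perp$, so $x\in\D$ if and only if $\mathrm{Hom}_\T(\S_1\cap\S_2,x)=0$. The inclusion $X_1\cup X_2\subseteq\D\cap\gpinj(\T)$ being immediate, it remains to prove the converse.

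Let $x\in\D\cap\gpinj(\T)$. First I would invoke \cref{thm-gp-stalks} to write $x\simeq\iota_{\P,\ast}(y)$ with $\P\in\Spc(\T^\omega)$ and $y\in\T_\P$ an indecomposable pure-injective object, and then choose $j\in I$ with $\P\in U_j$. By \cref{lem: loc at U then at p} the functor $\iota_\P^\ast$ factors as $\T\xrightarrow{\iota_{U_j^c}^\ast}\T(U_j)\xrightarrow{p^\ast}\T_\P$, exhibiting $\T_\P$ as the tt-stalk of $\T(U_j)$ at $\P$; passing to right adjoints gives $\iota_{\P,\ast}\simeq\iota_{U_j^c,\ast}\circ p_\ast$, so that $x\simeq\iota_{U_j^c,\ast}(y')$ with $y'\coloneqq p_\ast(y)\in\T(U_j)$. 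Applying \cref{prop-iota*-gpure-pure} inside $\T(U_j)$ to the prime $\P$ shows $y'$ is g-pure-injective in $\T(U_j)$, and it is indecomposable since $p_\ast$ is fully faithful; hence $y'\in\gpinj(\T(U_j))$.

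Now I would transport the vanishing along the adjunction $\iota_{U_j^c}^\ast\dashv\iota_{U_j^c,\ast}$. As $\iota_{U_j^c}^\ast$ is an essentially surjective monoidal localization, it carries the smashing ideals $\S_\ell$ and $\S_1\cap\S_2$ of $\T$ onto smashing ideals $\S_\ell'$ and $\S_1'\cap\S_2'$ of $\T(U_j)$, and the adjunction identifies $\mathrm{Hom}_{\T(U_j)}(\S_1'\cap\S_2',y')$ with $\mathrm{Hom}_\T(\S_1\cap\S_2,x)=0$. Thus $y'\in(\S_1'\cap\S_2')^\perp=\D_1'\vee\D_2'$, where $\D_\ell'\coloneqq(\S_\ell')^\perp\in\mathbb{D}^\otimes(\T(U_j))$. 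By hypothesis $\mathrm{ttZg}(\T(U_j))$ realizes $\mathbb{D}^\otimes(\T(U_j))$, so its tt-closed subsets are closed under finite unions; therefore $y'\in\D_\ell'\cap\gpinj(\T(U_j))$ for $\ell=1$ or $\ell=2$, i.e.\ $\mathrm{Hom}_{\T(U_j)}(\S_\ell',y')=0$. Pulling this back once more along the adjunction gives $\mathrm{Hom}_\T(\S_\ell,x)=0$, so $x\in\D_\ell\cap\gpinj(\T)\subseteq X_1\cup X_2$, which finishes the proof.

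The step I expect to be the main obstacle is the bookkeeping around $\iota_{U_j^c}^\ast$ in the third paragraph: one must check that this localization sends a smashing ideal of $\T$ to a smashing ideal of $\T(U_j)$ compatibly with finite meets, so that $\iota_{U_j^c}^\ast(\S_1\cap\S_2)$ genuinely generates $\S_1'\cap\S_2'$, and that the assignment $\D_\ell\mapsto\D_\ell'$ agrees with the anti-equivalence of \cref{equiv between def and smashing} for $\T(U_j)$. This is exactly where the idempotent calculus of \cref{rec-tensoring-idempotents} enters (writing $\S_\ell=\e_{W_\ell}\otimes\T$ and $\iota_{U_j^c}^\ast\e_{W_\ell}\simeq\e_{W_\ell\cap U_j}$). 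A secondary point, and the reason I would route the argument through \cref{thm-gp-stalks} rather than through \cref{lem-purity-covers}(b) alone, is that the object $y'$ must actually belong to $\gpinj(\T(U_j))$: a priori it is not clear that restriction along $\iota_{U_j^c}^\ast$ reflects g-pure-injectivity, so one instead produces $y'$ directly as a pushforward from the local stalk $\T_\P$, where g-purity and purity coincide.
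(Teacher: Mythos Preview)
Your proof is correct and follows essentially the same approach as the paper, which simply says to rerun the argument of \cref{local to global} with \cref{coro-surjection-gzg-stalks}(c) in place of \cref{thm-gp-stalks}. Your detour through \cref{thm-gp-stalks} and \cref{lem: loc at U then at p} to produce $y'\in\gpinj(\T(U_j))$ with $\iota_{U_j^c,\ast}(y')\simeq x$ is exactly how the surjectivity in \cref{coro-surjection-gzg-stalks}(c) is established, so you are just inlining that result; note also that your bookkeeping worry is unfounded, since the identity $\iota^\ast(\S_1\cap\S_2)=\iota^\ast(\S_1)\cap\iota^\ast(\S_2)$ holds for arbitrary smashing ideals (via left idempotents $\e_{\S_\ell}$), not only those coming from Thomason subsets.
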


\begin{proof}
    This follows a similar pattern as the proof of  \cref{local to global} using  \cref{coro-surjection-gzg-stalks}$(c)$ instead of  \cref{thm-gp-stalks}. 
\end{proof}

\begin{corollary}
    With the same hypothesis as in  \cref{local to global covers}, we obtain that $\T$ is spatial.  In particular, $\mathbb S^\otimes(\T)$ is isomorphic to the frame of open subsets of $\mathrm{ttZg}(\T)$.   
\end{corollary}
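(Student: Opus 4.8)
The plan is to deduce the statement formally from \cref{local to global covers} and \cref{equiv between def and smashing}, mirroring the corollary that follows \cref{local to global}. First I would invoke \cref{local to global covers}: under the stated hypothesis, the tt-closed subsets of $\gpinj(\T)$ are the closed sets of a topology, so the geometric tt-Ziegler spectrum $\mathrm{ttZg}(\T)$ is defined and realizes $\mathbb{D}^\otimes(\T)$ in the sense of \cref{def-ziegler-realize}. Concretely, this provides an order-preserving bijection $\D\mapsto \D\cap\gpinj(\T)$ between $\mathbb{D}^\otimes(\T)$ and the lattice of closed subsets of $\mathrm{ttZg}(\T)$; passing to complements turns this into a lattice isomorphism $\mathbb{D}^\otimes(\T)^{\mathrm{op}}\cong \mathcal{O}(\mathrm{ttZg}(\T))$, where $\mathcal{O}(-)$ denotes the frame of open subsets.

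Next I would compose with the lattice anti-isomorphism $\mathbb{S}^\otimes(\T)\to\mathbb{D}^\otimes(\T)^{\mathrm{op}}$, $\S\mapsto\S^\perp$, of \cref{equiv between def and smashing}. This yields an isomorphism of lattices $\mathbb{S}^\otimes(\T)\cong\mathcal{O}(\mathrm{ttZg}(\T))$. Since $\mathbb{S}^\otimes(\T)$ is a frame (by \cite{balmer2020frame}) and the right-hand side is the open-set frame of a topological space, and since an order isomorphism between complete lattices automatically preserves all joins and meets, this is a frame isomorphism. Hence $\mathbb{S}^\otimes(\T)$ is spatial, which by definition means $\T$ is spatial; and the displayed isomorphism is exactly the asserted identification of $\mathbb{S}^\otimes(\T)$ with the frame of opens of $\mathrm{ttZg}(\T)$.

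I do not expect any genuine obstacle here: all the substantive content lives in \cref{local to global covers} (which in turn rests on \cref{coro-surjection-gzg-stalks}$(c)$ and the reduction-to-stalks theorem \cref{thm-gp-stalks}). The only point worth a sentence is the verification that the composite bijection respects the frame structure rather than merely the lattice operations, and this is immediate since order isomorphisms between complete lattices are automatically complete-lattice isomorphisms. In writing it up I would therefore keep the proof to two or three lines, simply citing \cref{local to global covers} and \cref{equiv between def and smashing}.
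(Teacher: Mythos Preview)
Your proposal is correct and follows exactly the approach the paper intends: the corollary is the cover-version of the earlier one following \cref{local to global}, and its proof is obtained by combining \cref{local to global covers} with \cref{equiv between def and smashing} just as you describe. The paper leaves the proof implicit for this reason, so your two-line write-up citing these two results is precisely what is expected.
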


\section{Examples: spatial tt-categories via the geometric Ziegler spectrum}\label{sec-examples}

In this section, we provide several examples in which the geometric tt-Ziegler spectrum of a rigidly-compactly generated tt-category realizes the frame of smashing ideals.
 
\begin{recollection}\label{rec-vonneumann}
Suppose that $A$ is a Hereditary or a von Neumann regular ring. Garkusha and Prest \cite[Theorem 8.1 and Theorem 8.5]{GP05} gave a full description of the pure injective indecomposable complexes in $\mathbf{D}(A)$, namely
\[\mathrm{pinj}(\mathbf{D}(A)) = \coprod_{n\in \Z}\pinj(\mathrm{Mod}\text{-}A)_n\,,\]
where $\pinj(\mathrm{Mod}\text{-}A)_n$ denotes the set of isomorphism classes of indecomposable pure-injective $A$-modules, and the subscript keeps track of the $n$th copy of this set.
\end{recollection}

\begin{remark}
    In the context of \cref{rec-vonneumann}, if we suppose further that $A$ is commutative, by  \cref{pure=gpure affine} we have \[\gpinj(\mathbf{D}(A)) = \pinj(\mathbf{D}(A)).\]  
\end{remark}

Let us begin with the semisimple case:

\begin{example}\label{Zieg geom semisimple} Let $R$ be a semisimple commutative ring. Then the derived category of $R$ is equivalent to $(\mathrm{Mod}\text{-}R)^\Z$. It follows that the indecomposable pure injectives in $\mathbf{D}(R)$ are completely determined by the indecomposable pure injective right $R$-modules. 

We claim that under these hypotheses the geometric tt-closed subsets define a topology on $\gpinj(\mathbf{D}(R))$.  Since $R$ generates $\mathbf{D}(R)$ as localizing subcategory, we get that any thick subcategory of $\mathbf{D}(R)$ is automatically stable under the tensor product with compact objects. Moreover, since $R$ is semisimple, cones in $\mathbf{D}(R)$ are direct sums and hence a thick subcategory is simply a subcategory closed under suspensions and direct sums and retracts. We conclude that a definable $\otimes$-ideal is simply a definable subcategory closed under suspension. 

Now,  Wagstaffe in \cite[Proposition 6.1.5]{Wag23} showed that the sets $\D\cap\pinj(\T)$ define a topology on $\pinj(\T)$, where $\T$ is a rigidly-compactly generated tt-category and $\D$ is a definable suspension closed subcategory; see also \cref{rec-shiftziegler}. This proves our claim. 
\end{example}

We now address von Neumann regular rings and Dedekind domains, respectively.

\begin{example}
 Let $R$ be a commutative von Neumann regular ring. Then the localization rings $R_\pp$ are fields for every prime ideal $\pp\subset R$. Under the homeomorphism $\mathrm{Spec}(R)\cong \Spc(\mathbf D(R)^\omega)$ we may identify $\pp$ with its image $\P$ in the Balmer spectrum $\Spc(\mathbf D(R)^c)$, and then we have an equivalence $\mathbf D(R)_\P\cong \mathbf D(R_\pp)$. By \cref{Zieg geom semisimple} and  \cref{local to global} we have that $\mathrm{ttZg}(\mathbf{D}(R))$ realizes the lattices of definable tensor ideals of $\mathbf{D}(R)$, and as a consequence, the frame of smashing ideals of $\mathbf{D}(R)$ is isomorphic to the frame of opens of $\mathrm{ttZg}(\mathbf{D}(R))$. Note that the telescope conjecture is known to hold in this case \cite{bazzoni2017smashing}.
\end{example}

\begin{example}\label{examp-Ziegler Dedekind} Let $R$ be a commutative Dedekind domain. In this case, $\mathbf{D}(R)$ satisfies the telescope conjecture. Hence by \cite[Theorem 3.15]{Thomason97} there is a bijective correspondence between the definable thick subcategories of $\mathbf{D}(R)$ and the Thomason's subsets of $\mathrm{Spec}(R)$. Specializing this bijection to the local rings $R_\pp$, for every $\pp\in \mathrm{Spec}(R)$ we obtain that there are only 2 non-trivial definable thick subcategories in $\mathbf{D}(R_\pp)$, as $\mathrm{Spec}(R_\pp)$ is a Sierpiński space. But we know that at least we have $\mathbf{D}(R_\pp)$ and the full subcategory $\mathbf{D}(Q(R))$, where $Q(R)$ is the ring of fractions of $R$. Hence this is the full list. Then
\begin{align*}
    \mathbf{D}(R_\pp)\cap \pinj(\mathbf{D}(R_\pp)) = \pinj(\mathbf{D}(R_\pp)), \quad \mbox{ and } \\    \mathbf{D}(Q(R))\cap \pinj(\mathbf{D}(R_\pp)) = \coprod_{n\in \Z} Q(R)[-n]
\end{align*}
is clearly a topology, yielding $\mathrm{ttZg}(\mathbf{D}(R_\pp))$. Moreover, these closed sets (plus the empty set) are in a lattice bijection with the closed sets in the Sierpiński topology, hence it realizes the frame of definable thick subcategories. By  \cref{local to global} we obtain $\mathrm{ttZg}(\mathbf{D}(R))$ realizes the frame of smashing ideals of $\mathbf{D}(R)$.
\end{example}

\begin{proposition}
Let $k$ be a field, and $\mathbb P^1$ be the projective line over $k$. Then the  geometric  closed subsets of $\gpinj(\mathbb P^1)$ define a topology. In particular the frame of definable $\otimes$-ideals is realized by the geometric tt-Ziegler spectrum $\mathrm{ttZg}(\mathbf{D}(\mathbb
P^1))$.
\end{proposition}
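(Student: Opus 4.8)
The strategy is to apply the local-to-global principle of \cref{local to global} to $\T = \mathbf{D}(\mathbb{P}^1)$, so the task reduces to showing that for each point $\P \in \Spc(\mathbf{D}(\mathbb{P}^1)^\omega)$, the geometric tt-Ziegler spectrum of the stalk $\mathbf{D}(\mathbb{P}^1)_\P$ realizes the lattice $\mathbb{D}^\otimes(\mathbf{D}(\mathbb{P}^1)_\P)$. First I would identify the Balmer spectrum: since $\mathbf{D}(\mathbb{P}^1) = \mathbf{D}_{\mathrm{qc}}(\mathbb{P}^1)$, we have $\Spc(\mathbf{D}(\mathbb{P}^1)^\omega) \cong \mathbb{P}^1$ as a topological space, with one generic point $\eta$ and the closed points $x$. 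The stalk at a closed point $x$ is $\mathbf{D}(\mathbb{P}^1)_x \simeq \mathbf{D}(\mathcal{O}_{\mathbb{P}^1,x})$, the derived category of a discrete valuation ring (hence a commutative Dedekind --- in fact local PID --- domain), while the stalk at the generic point $\eta$ is $\mathbf{D}(k(\eta))$, the derived category of a field.

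**Reduction to the stalks.** For the generic stalk $\mathbf{D}(k(\eta))$, the field $k(\eta)$ is a semisimple commutative ring, so \cref{Zieg geom semisimple} applies directly: the geometric tt-closed subsets define a topology on $\gpinj(\mathbf{D}(k(\eta)))$ and, since $\mathbf{D}(k(\eta))$ is local, $\gpinj = \pinj$, and this topology realizes $\mathbb{D}^\otimes(\mathbf{D}(k(\eta)))$. For a closed point $x$, the local ring $\mathcal{O}_{\mathbb{P}^1,x}$ is a DVR, hence a commutative Dedekind domain, so the analysis in \cref{examp-Ziegler Dedekind} applies to $\mathbf{D}(\mathcal{O}_{\mathbb{P}^1,x})$; in fact since the ring is already local, $\Spec$ is a Sierpiński space and there are exactly two non-trivial definable (thick, automatically tensor, by the telescope conjecture) subcategories, so the tt-closed subsets of $\gpinj(\mathbf{D}(\mathcal{O}_{\mathbb{P}^1,x})) = \pinj(\mathbf{D}(\mathcal{O}_{\mathbb{P}^1,x}))$ are a finite lattice in bijection with the Sierpiński closed sets, and this trivially defines a topology that realizes $\mathbb{D}^\otimes(\mathbf{D}(\mathcal{O}_{\mathbb{P}^1,x}))$. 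With every stalk handled, \cref{local to global} yields that the tt-closed subsets of $\gpinj(\mathbf{D}(\mathbb{P}^1))$ form a topology and that $\mathrm{ttZg}(\mathbf{D}(\mathbb{P}^1))$ realizes $\mathbb{D}^\otimes(\mathbf{D}(\mathbb{P}^1))$; combined with \cref{equiv between def and smashing}, the frame of smashing ideals $\mathbb{S}^\otimes(\mathbf{D}(\mathbb{P}^1))$ is anti-isomorphic to $\mathbb{D}^\otimes(\mathbf{D}(\mathbb{P}^1))$, hence isomorphic to the frame of opens of $\mathrm{ttZg}(\mathbf{D}(\mathbb{P}^1))$.

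**Main obstacle.** I expect the delicate point to be correctly identifying the stalks $\mathbf{D}(\mathbb{P}^1)_\P$ and verifying they are of the form $\mathbf{D}(R_\pp)$ for the appropriate commutative ring --- i.e. checking that the tt-stalk construction $\T/\loc(\P)$ is compatible with Zariski localization of the scheme. This should follow from the standard affine-locality of $\mathbf{D}_{\mathrm{qc}}(-)$ together with the identification $\mathbf{D}(\mathbb{P}^1)(U) \simeq \mathbf{D}_{\mathrm{qc}}(U)$ for $U$ a quasi-compact open of $\mathbb{P}^1$ (exactly as used in \cref{coro-zgz-scheme}), passed to the limit over opens containing $\P$ via \cref{stalks as a colimit}; but one must be slightly careful that for the closed point $x$ the relevant open is $\mathbb{A}^1 \ni x$ and then the stalk is the further localization to $\mathcal{O}_{\mathbb{P}^1,x}$. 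A secondary subtlety is ensuring the identification $\gpinj(\mathbf{D}(\mathbb{P}^1)_\P) = \pinj(\mathbf{D}(\mathbb{P}^1)_\P)$, which holds by \cref{pure=gpure affine} since each stalk is $\mathbf{D}$ of a commutative ring; this is exactly the hypothesis $\gpinj(\T_\P) = \pinj(\T_\P)$ needed to invoke \cref{local to global}.
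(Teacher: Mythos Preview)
Your argument is correct, but it takes a slightly different route from the paper. The paper applies \cref{local to global covers} to the standard affine cover $\mathbb{P}^1 = U_0 \cup U_1$ with $U_i \cong \mathbb{A}^1_k = \Spec(k[x])$: since $k[x]$ is a commutative Dedekind domain, \cref{examp-Ziegler Dedekind} already gives that $\mathrm{ttZg}(\mathbf{D}(k[x]))$ realizes $\mathbb{D}^\otimes(\mathbf{D}(k[x]))$, and the cover version of the local-to-global principle finishes the proof in one line. You instead use the stalk version \cref{local to global}, which forces you to treat the generic point and the closed points separately; at closed points you are essentially re-running the internal analysis of \cref{examp-Ziegler Dedekind} for the DVR $\mathcal{O}_{\mathbb{P}^1,x}$, and at the generic point you need the additional input of \cref{Zieg geom semisimple}. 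Both arguments are valid; the paper's is shorter because it leverages the Dedekind example as a black box rather than unpacking its stalkwise content, while yours makes the pointwise structure of $\mathbb{P}^1$ more visible and avoids the (admittedly mild) identification $\mathbf{D}(\mathbb{P}^1)(U_i) \simeq \mathbf{D}(k[x])$.
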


\begin{proof}
First note that $\mathrm{ttZg}(\mathbf{D}(k[x]))$ is well defined by \cref{examp-Ziegler Dedekind}. Then  \cref{local to global covers} applied to the standard affine cover of $\mathbb P^1$ gives the desired result. 
\end{proof} 

In order to generalize the previous examples, let us recall the following results.

\begin{recollection}\label{rec-coherent}
    Let $R$ be a commutative coherent ring. Let $\mathrm{inj}_\mathrm{zig}(R)$ denotes the set of iso-classes of indecomposable injective $R$--modules equipped with the Ziegler topology.  
    Then  the map 
    \[\mathrm{Spec}(R)^{\vee} \to \mathrm{inj}_\mathrm{zig}(R), \, \pp\mapsto E(R/\pp) \]
  is injective, where $E(-)$ denotes injective hull and $\mathrm{Spec}(R)^{\vee}$ denotes the Hochster dual of the Zariski spectrum of $R$; see \cite[Section 3.6]{PW24} for a more detailed discussion. Furthermore, by \cite[14.4.5]{Prest}, this map induces an isomorphisms of frames of open subsets. 
\end{recollection}

\begin{recollection}\label{rec-shiftziegler}
    Let $\T$ be a rigidly-compactly generated tt-category. A definable category $\D$ is \textit{shift-closed} if $\Sigma^n \D\subseteq \D$ for any integer $n$. In \cite[Proposition 6.1.5]{Wag23}, it is shown that the shift-closed definable subcategories of $\T$ induce a topology on $\pinj(\T)$; the resulting topological space is denoted by $\mathrm{Zg}^\Sigma(\T)$. Now, let $\mathrm{Zg}(\T)/\Sigma$ be the quotient of $\mathrm{Zg}^\Sigma(\T)$ by the obvious action of the shift functor $\Sigma$. Then the quotient map induces an isomorphism of frame of opens 
    \[
    \mathrm{Op}(\mathrm{Zg}(\T)/\Sigma)\simeq \mathrm{Op}(\mathrm{Zg}^\Sigma(\T)).
    \]
\end{recollection}

\begin{recollection}\label{rec-emb-rings}  
   Let $R$ be a ring.  Write $\mathrm{Zg}(R)$ to denote the Ziegler spectrum of $R$; see for instance \cite[Section 6]{GP05}. Then the functor $\Mod(R)\to \mathbf{D}(R)$ which maps a module to a complex concentrated in degree 0, induces a closed embedding 
   \[
   \mathrm{Zg}(R) \to \mathrm{Zg}(\mathbf{D}(R))/\Sigma.
   \]
   Indeed, this follows from \cite[Theorem 7.3]{GP05} combined with \cref{rec-shiftziegler}.
\end{recollection}

\begin{proposition}
Let $R$ be a commutative coherent ring that satisfies the telescope conjecture. Then $\mathrm{ttZg}(\mathbf{D}(R))$  realizes the frame of smashing subcategories.
\end{proposition}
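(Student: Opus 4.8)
The plan is to verify the only non-formal assertion behind \cref{def-ziegler-realize}, namely that the tt-closed subsets of $\gpinj(\mathbf{D}(R))$ are the closed sets of a topology; once this is in place, the remark preceding \cref{lemma-intersection tt-closed} together with \cref{equiv between def and smashing} yields $\mathbb{S}^\otimes(\mathbf{D}(R))\cong\mathrm{Op}(\mathrm{ttZg}(\mathbf{D}(R)))$ automatically. First I would collect the standing simplifications: since $R$ is commutative, \cref{pure=gpure affine} gives $\gpinj(\mathbf{D}(R))=\pinj(\mathbf{D}(R))$; and since $R$ satisfies the telescope conjecture, every smashing ideal of $\mathbf{D}(R)$ is a finite localization, so under $\Spc(\mathbf{D}(R)^\omega)\cong\Spec R$ the definable tt-ideals are precisely the subcategories $\D_Y:=(\mathbf{D}(R)_Y)^{\perp}=\f_Y\otimes\mathbf{D}(R)=\mathbf{D}(R)(Y^c)$ for $Y$ a Thomason subset of $\Spec R$, with $Y\mapsto\D_Y$ inclusion-reversing. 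Thus the tt-closed subsets are exactly the sets $X_Y:=\D_Y\cap\gpinj(\mathbf{D}(R))$, and I must show the $X_Y$ form a topology.

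For an indecomposable $x\in\gpinj(\mathbf{D}(R))$, set $Y_x:=\bigcup\{\,Y\ \text{Thomason}\ \mid\ x\in\D_Y\,\}$. Using $\f_{Y_1}\otimes\f_{Y_2}\simeq\f_{Y_1\cup Y_2}$ and, for directed unions, $\f_{\bigcup Y_i}\simeq\colim\f_{Y_i}$ (so that $\f_{Y_i}\otimes x\simeq x$ for all $i$ forces $\f_{\bigcup Y_i}\otimes x\simeq x$), the family $\{\,Y\mid x\in\D_Y\,\}$ is closed under finite unions and directed unions, hence $Y_x$ itself lies in it and $x\in\D_Y$ if and only if $Y\subseteq Y_x$. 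The crucial point is then that $Y_x=\supp(\pp_x)$ for a necessarily unique prime $\pp_x\in\Spc(\mathbf{D}(R)^\omega)$: indeed $x\in\D_{Y_x}=\mathbf{D}(R)(Y_x^c)$, and applying \cref{thm-gp-stalks} inside the rigidly-compactly generated tt-category $\mathbf{D}(R)(Y_x^c)$ — whose spectrum is $Y_x^c$ — to the indecomposable g-pure-injective $\iota_{Y_x}^\ast(x)$ produces a prime $\pp_x\in Y_x^c$ together with a pure-injective $y_x$ in the stalk of $\mathbf{D}(R)(Y_x^c)$ at $\pp_x$ with $\iota_{Y_x}^\ast(x)\simeq\iota_{\pp_x,\ast}(y_x)$; composing pushforwards gives $x\simeq\iota_{\pp_x,\ast}(y_x)$ in $\mathbf{D}(R)$, so $\supp(\pp_x)\subseteq Y_x$, while $Y_x$ being specialization-closed together with $\pp_x\notin Y_x$ forces $\supp(\pp_x)=Y_x$ (uniqueness is immediate since $\supp$ is injective on primes).

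With $X_Y=\{\,x\mid Y\subseteq\supp(\pp_x)\,\}$ in hand, closure under arbitrary intersections is clear ($\bigcap_iX_{Y_i}=X_{\bigcup_iY_i}$), and for finite unions I would argue $X_{Y_1}\cup X_{Y_2}=X_{Y_1\cap Y_2}$: the nontrivial inclusion takes an $x$ with $Y_1\cap Y_2\subseteq\supp(\pp_x)$, equivalently $\mathrm{gen}(\pp_x)\subseteq Y_1^c\cup Y_2^c$; since $\mathrm{gen}(\pp_x)$ is the spectrum of the local tt-category $\mathbf{D}(R)_{\pp_x}$, hence has $\pp_x$ as its unique closed point, and each $Y_i^c$ is generalization-closed, either $\pp_x\in Y_1^c$ or $\pp_x\in Y_2^c$, and in the first case generalization-closedness gives $\mathrm{gen}(\pp_x)\subseteq Y_1^c$, i.e. $Y_1\subseteq\supp(\pp_x)$, i.e. $x\in X_{Y_1}$. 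This makes the $X_Y$ a topology, so $\mathrm{ttZg}(\mathbf{D}(R))$ is defined and realizes $\mathbb{D}^\otimes(\mathbf{D}(R))$; by \cref{equiv between def and smashing} the frame of smashing ideals $\mathbb{S}^\otimes(\mathbf{D}(R))$ is its frame of opens. The assignment $x\mapsto\pp_x$ moreover presents $\mathrm{ttZg}(\mathbf{D}(R))$ over the Hochster dual $\Spec(R)^\vee$, which, using coherence of $R$ and \cref{rec-coherent}, one can identify concretely; alternatively one may route the passage to stalks through \cref{local to global}, reducing at once to the local rings $R_\pp$, which are again commutative coherent and satisfy the telescope conjecture.

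The step I expect to be the main obstacle is the appeal to \cref{thm-gp-stalks} inside the localization $\mathbf{D}(R)(Y_x^c)$: one must know that $\iota_{Y_x}^\ast$ carries the indecomposable g-pure-injective $x$ (which lies in its essential image) to an indecomposable g-pure-injective of $\mathbf{D}(R)(Y_x^c)$ — the geometric-purity analogue of the reflection statement in \cref{iota preserves definables}(b), which should follow from the same ``definable functor'' bookkeeping used in \cref{prop-iotaU-gp-gp} — and that the higher-categorical input of \cref{thm-gp-stalks} is compatible with the identification of $\iota_{Y_x,\ast}$ composed with the stalk pushforward inside $\mathbf{D}(R)(Y_x^c)$ with the stalk pushforward $\iota_{\pp_x,\ast}$ of $\mathbf{D}(R)$ itself. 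Everything else is the idempotent calculus of \cref{rec-tensoring-idempotents}, the colimit description \cref{stalks as a colimit}, and the formal consequences of \cref{equiv between def and smashing}.
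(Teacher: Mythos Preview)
Your route is genuinely different from the paper's, and the obstacle you flag at the end is a real gap that does not dissolve under the ``definable functor bookkeeping'' you propose. The paper never tries to prove that $Y_x=\supp(\pp_x)$. Instead it uses the \emph{coherence} hypothesis to obtain a closed embedding $\Spec(R)^\vee\hookrightarrow\Zg(\mathbf D(R))/\Sigma$ (combining \cref{rec-coherent} with \cref{rec-emb-rings}); since the telescope conjecture identifies $\mathbb D^\otimes(\mathbf D(R))$ with $\mathrm{Cl}(\Spec(R)^\vee)$, direct image along this closed embedding exhibits $\mathbb D^\otimes(\mathbf D(R))$ inside the lattice $\mathbb D^\Sigma(\mathbf D(R))$ of shift--closed definables in a join--preserving way, and Wagstaffe already showed the latter lattice is a topology (\cref{rec-shiftziegler}). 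Thus the paper's argument hinges on coherence, which your argument never invokes --- if your sketch went through it would prove the statement for every commutative ring satisfying the telescope conjecture.

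Concretely, the reflection step fails for the following reason. To show $\iota_{Y_x}^\ast(x)$ is g-pure-injective in $\mathbf D(R)(Y_x^c)$, you need every g-phantom $f$ with target $\iota_{Y_x}^\ast(x)$ to vanish; the natural move is to push forward and use g-pure-injectivity of $x$ in $\mathbf D(R)$. But in $\mathbf D(R)$ g-phantom equals phantom by \cref{pure=gpure affine}, and by adjunction $\iota_{Y_x,\ast}(f)$ is phantom in $\mathbf D(R)$ \emph{iff} $f$ is phantom (not merely g-phantom) in $\mathbf D(R)(Y_x^c)$. So you are reduced to knowing that g-phantom equals phantom in the localization $\mathbf D(R)(Y_x^c)$ --- a category which is not a priori affine, and which would be $\mathbf D(R_{\pp_x})$ precisely when $Y_x^c=\mathrm{gen}(\pp_x)$, the very statement you are trying to prove. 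The alternative via \cref{local to global} hits the same wall: for local $R$ one must still establish $Y_x=\supp(\pp_x)$, and localizing at the unique closed point is the identity, so no reduction occurs. Everything else in your outline (closure of $\{Y:x\in\D_Y\}$ under directed unions, the specialization argument giving $\supp(\pp_x)=Y_x$ once $\pp_x\in Y_x^c$ is produced, and the deduction $X_{Y_1}\cup X_{Y_2}=X_{Y_1\cap Y_2}$) is correct.
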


\begin{proof}
Combining \cref{rec-coherent} and \cref{rec-emb-rings} we obtain a closed embedding 
\[\mathrm{Spec}(R)^{\vee} \to \mathrm{inj}_{\mathrm{zig}}(R) \to \mathrm{Zg}(R) \to \Zg(\mathbf{D}(R))/\Sigma. \] 
By our assumption on the telescope conjecture for $\mathbf{D}(R)$, we get that $\mathrm{Cl}(\mathrm{Spec}(R)^{\vee})$ is isomorphic to $\mathbb{D}^\otimes(\mathbf{D}(R))$; see \cref{rem-hochster-dual} and \cref{equiv between def and smashing}. Hence the embedding above, shows that direct image induces a map  of lattices 
\begin{equation}\label{eq-incl}
   \mathbb{D}^\otimes(\mathbf{D}(R)) \to \mathbb{D}^\Sigma(\mathbf{D}(R)) 
\end{equation}
where $\mathrm{\mathbb{D}^\Sigma(\mathbf{D}(R))}$ denotes the lattice of shift-closed definable subcategories of $\mathbf{D}(R)$ which is isomorphic to $\mathrm{Cl}(\Zg(\mathbf{D}(R))/\Sigma)$; see \cref{rec-shiftziegler}. 
We claim that this implies the desired result. Note that it is enough to verify that finite union of tt-Ziegler closed subsets is tt-Ziegler closed. Let $X_i=\D_i\cap \gpinj(\mathbf{D}(R))$ with $\D_i\in \mathbb{D}^\otimes(\mathbf{D}(R))$, for $i=1,2$. Consider 
\[
\D=\D_1\vee \D_2,
\]
where $\vee$ denotes the join in $\mathbb{D}^\otimes (\mathbf{D}(R))$. In order to prove our claim, it is enough to verify that  $X_1\cup X_2=\D\cap \gpinj(\mathbf{D}(R))$. But using the map from \cref{eq-incl}, we deduce that the join $\D=\D_1\vee \D_2$ is the same as the join in $\mathbb{D}^\Sigma (\mathbf{D}(R))$. Hence, by \cite[Proposition 6.1.5]{Wag23} we deduce that $X_1\cup X_2=\D\cap \gpinj(\mathbf{D}(R))$ as we wanted. 
\end{proof}

%%%%%%%%%%%5------------------ referencias

\bibliographystyle{amsalpha}
\bibliography{mybibfile}

\end{document}